\newtheorem{theorem}{Theorem}[section]
\newtheorem{corollary}[theorem]{Corollary}
\newtheorem{definition}[theorem]{Definition}
\newtheorem{lemma}[theorem]{Lemma}
\newtheorem{proposition}[theorem]{Proposition}
\newtheorem{remark}[theorem]{Remark}
\newtheorem{example}[theorem]{Example}
\def\quiver{Q} 
\def\polytope{\nabla}
\def\ideal{\mathcal{I}}
\def\rep{{\mathrm{Rep}}}
\def\moduli{{\mathcal{M}}}
\def\coord{{\mathcal{O}}}
\def\proj{{\mathrm{Proj}}}
\def\spec{\mbox{{\rm{Spec}} }}
\def\affspan{{\mathrm{AffSpan}}}
\def\supp{\mathrm{supp}}
\def\semigr{S}
\def\uk{{\underline{k}}}
\def\unull{{\underline{0}}}
\def\cocoa{{\hbox{\rm C\kern-.13em o\kern-.07em C\kern-.13em o\kern-.15em A}}}
\def\mc{{\mathbb{C}}}
\def\mz{{\mathbb{Z}}}
\def\mn{{\mathbb{N}_0}}
\def\mr{{\mathbb{R}}}
\def\cone{{\mathrm{Cone}}}
\def\<{\langle}
\def\>{\rangle}
\def\cbgr{K}
\def\cbq{\vec{K}}
\begin{document}
 
\title{Toric quiver cells}
\author{M. Domokos 
\and D. Jo\'o} 
\thanks{Partially supported by  National Research, Development and Innovation Office,  NKFIH K 119934 and PD 121410.  The paper is partially based on research reported in \cite{joo_phd}.}

\subjclass[2010]{Primary: 14M25;   Secondary: 05E40, 14L24, 16G20, 52B20.}

\keywords{binomial ideal, moduli space of quiver representations, toric varieties}

\date{}
%{\small \begin{center} 
%\institute
\address{MTA Alfr\'ed R\'enyi Institute of Mathematics\\
Re\'altanoda u. 13-15, 1053 Budapest, Hungary} 
\email{domokos.matyas@renyi.mta.hu} \quad \email{joo.daniel@renyi.mta.hu }
%\end{center}
\maketitle

%\date{}                                           % Activate to display a given date or no date

\begin{abstract} 
It is shown that up to dimension four, the toric ideal of a quiver polytope is generated in degree two, with the only exception of the four-dimensional Birkhoff polytope. 
As a consequence, B{\o}gvad's conjecture holds for quiver polytopes of dimension at most four. In arbitrary dimension, the toric ideal of a compressed polytope is generated in degree two if the polytope has no neighbouring singular vertices. Furthermore, the toric ideal of a compressed polytope with at most one singular vertex has a quadratic Gr\"obner basis. 
\end{abstract}

\section{Introduction}

This paper is a sequel to \cite{domokos-joo}, where it was proved that the toric ideal 
of a quiver polytope (called also flow polytope) is generated in degree three. Although most of our present work  concerns binomial defining ideals of polytopal semigroup algebras, we begin the Introduction by recalling a motivation to study these ideals that comes from representation theory of associative algebras, where the language of {\it quivers} (directed graphs) and their representations plays a central role. 
The space of quiver representations with a fixed dimension vector is endowed with the base change action of a product of general linear groups such that the orbits are in bijective correspondence with the isomorphism classes of representations. The corresponding affine quotient spaces parameterize the semisimple representations of the quiver. 
It is well known that when this affine quotient space is  smooth then it can only be an affine space (see for example Theorem 2.1 in \cite{bocklandt}). More sophisticated moduli spaces were introduced  in \cite{king}, where geometric invariant theory (GIT) was applied to construct quasi-projective quotients that parametrize representations satisfying certain stability conditions. Amongst these GIT quotients one can find many non-trivial smooth examples.

The aim of several recent works was to describe the cases when the moduli spaces resulting from these constructions are smooth in terms of the combinatorial properties of the quiver. In \cite{bocklandt} a characterization of quiver settings (i.e. quivers with a fixed dimension vector) with smooth affine quotients were given via a combinatorial reduction process. In \cite{joo} this result was used to describe the same class in a manner analogous to describing classes of graphs by forbidden minors. Thanks to \cite{adriaenssens-lebruyn} 
these results can be applied to decide smoothness of the GIT moduli spaces of quiver representations mentioned above. 
Furthermore, it was shown in \cite{domokos:gmj}  that  a  connected quiver is Dynkin or extended Dynkin if and only if all moduli spaces of its representations are smooth (see also  \cite{chindris}, \cite{bobinski}, \cite{carroll-chindris} for related work).  A key tool to develop results in this direction is to establish methods of simplifying the structure of the quiver without altering the property of being smooth. In Section \ref{sec:arrowremoval} we show in Proposition~\ref{prop:smooth-arrow-removal}  
that if a moduli space of quiver representations is smooth then the moduli spaces we obtain after removing some arrows but keeping the same weight and dimension vector must also be smooth (or empty).  

The rest of the paper is related to the case when the dimension vector of the quiver is set to be one on every vertex. This implies that the moduli spaces that arise are toric varieties, called  {\it toric quiver varieties}. This special case has been studied in \cite{altmann-hille}, \cite{altmann-straten}, \cite{domokos-joo}, \cite{hille:chemnitz}, \cite{hille:canada}, \cite{hille:laa}. Toric quiver varieties come with a canonical embedding into projective space given by a {\it quiver polyhedron} under standard constructions of toric geometry. Applying a result from \cite{yamaguchi-ogawa-takemura} (which proved a conjecture from \cite{diaconis-eriksson}) it was shown in \cite{domokos-joo} that the toric ideal of a projective toric quiver variety in this particular embedding is always generated by its elements of degree at most three. The aim of our work in Section \ref{sec:quivercells} is to refine this result by listing the quiver polytopes up to a fixed dimension that yield a toric ideal that can not be generated in degree two. 
One of the key tools we use here is a hyperplane subdivision method which was also applied in \cite{haase} to study the toric ideals of $3 \times 3$ transportation polytopes. This method allows us to estimate the generators of the toric ideals of quiver polytopes by finding generators for some small subpolytopes which we call {\it quiver cells}. 
Quiver cells turn out to be quiver polytopes themselves. Refining and developing results from \cite{altmann-straten} and \cite{altmann-nill-schwentner-wiercinska} we pointed out in \cite{domokos-joo} that in each dimension there are only finitely many projective toric quiver varieties (up to isomorphism). However, in each dimension there are infinitely many 
quiver polytopes (up to integral-affine equivalence). In contrast in each dimension 
there are finitely many quiver cells  (up to integral-affine equivalence). 
Building on \cite{domokos-joo} we give an approximate classification of them in 
Theorem~\ref{thm:listcells}.  
This is then used to prove our main result Theorem~\ref{thm:cells} saying that the only quiver polytope up to dimension $4$ whose toric ideal is not generated in degree two is the Birkhoff polytope $B_3$. This is a generalization of Proposition 1 in \cite{haase} where the same statement was verified for  $3 \times 3$ transportation polytopes which are quiver polytopes of a complete bipartite quiver.

Smoothness of moduli spaces of quiver representations discussed earlier and quadratic generation of toric ideals come together in  B{\o}gvad's conjecture (see 
Conjecture 13.19 in \cite{sturmfels} or \cite{bruns}) which asserts that the toric ideal of a smooth normal lattice polytope can always be generated in degree two. 
Theorem~\ref{thm:cells} implies that B{\o}gvad's conjecture holds for quiver polytopes of dimension at most four, see Corollary~\ref{cor:bogvad}.  The quiver cells playing essential role in our approach are instances of {\it compressed polytopes}, which are just lattice polytopes of facet width one. Compressed polytopes and their toric ideals have received considerable attention in recent literature and play an important role in linear programming (see for example \cite{haase-etal}, \cite{hibi-ohsugi}, \cite{sullivant} or Chapter 9 of \cite{lorea-rambau-santos}). In Section \ref{sec:relations} we prove quadratic generation for toric ideals of
compressed polytopes under some conditions on the arrangements of the singular points. More precisely, 
Theorems  \ref{thm:0-1deg2} and \ref{thm:grobner}  assert that when a compressed polytope has no neighbouring singular vertices then its toric ideal is always generated in degree two, moreover that if it has at most one singular vertex then its toric ideal possesses a quadratic Gr{\"o}bner basis. These results also provide a basis for further research on proving quadratic generation (in particular, B{\o}gvad's conjecture) for the toric ideals of certain classes of non-compressed polytopes by a similar strategy to that of our Section \ref{sec:quivercells}.

\section*{Acknowledgements} We are indebted to Lutz Hille and Endre Szab{\'o} for their insightful comments on our work. We thank also the referees of the first version of this paper for suggestions improving the manuscript. 

%%%%%%%%%%%%%%%%%%%%%%%%%%%%%%%%%%%%%%%%%%%%%%%%%%%%%%%%%%%
\section{Preliminaries}
\subsection{Moduli Spaces of Quiver Representations}
Let $\quiver$ be a quiver (i.e. a finite directed graph) with vertex set $\quiver_0$ and arrow set $\quiver_1$ (loops, multiple arrows are allowed). 
For an arrow $a\in\quiver_1$ write $a^-$ for the starting vertex and $a^+$ for the terminating vertex of $a$.  
A {\it representation} $R$ of $\quiver$ assigns to each $v\in \quiver_0$ a finite dimensional $\mc$-vector space $R(v)$, and to each $a\in\quiver_1$ a linear map $R(a):R(a^-)\to R(a^+)$. A {\it morphism} between representations $R$ and $S$ is a collection of linear maps  $L(v):R(v)\to S(v)$ $(v\in \quiver_0)$ with $L(a^+)\circ R(a)=S(a)\circ L(a^-)$ for all $a\in\quiver_1$;  $L$ is an {\it isomorphism} if $L(v)$ is a linear isomorphism for all $v\in\quiver_0$. 
The {\it dimension vector} of $R$ is $\dim_{\mc}(R(v)\mid v\in\quiver_0)$. 
 For a fixed dimension vector $\alpha\in\mn^{\quiver_0}$ consider 
\[\rep(\quiver,\alpha):=\bigoplus_{a\in \quiver_1}\hom_{\mc}(\mc^{\alpha(a^-)},\mc^{\alpha(a^+)}).\] 
This is called the {\it space of $\alpha$-dimensional representations} of $\quiver$, as to $x\in \rep(\quiver,\alpha)$ one associates the representation $R_x$ where 
$R_x(v):=\mc^{\alpha(v)}$ is the space of column vectors and $R_x(a):=x(a)$ for $a\in\quiver_1$. 
Clearly for any $\alpha$-dimensional representation of $\quiver$ there exists an $x\in\rep(\quiver,\alpha)$ with $R\cong R_x$. The representations $R_x$ and $R_y$ are isomorphic if and only if $x,y\in\rep(\quiver,\alpha)$ belong 
to the same orbit of the product of general linear groups 
$GL(\alpha):=\prod_{v\in\quiver_0}GL_{\alpha(v)}(\mc)$ acting linearly on $\rep(\quiver,\alpha)$ via 
\[g\cdot R:=(g(a^+)R(a)g(a^-)^{-1}\mid a\in \quiver_1)\quad (g\in GL(\alpha), R\in\rep(\quiver,\alpha)).\] 

A polynomial function $f$ on $\rep(\quiver,\alpha)$ is a {\it relative invariant of weight} $\theta\in\mz^{\quiver_0}$ if 
$f(g\cdot R)=(\prod_{v\in \quiver_0}\det(g(v))^{\theta(v)})f(R)$ holds for all $g\in GL(\alpha)$ and $R\in\rep(\quiver,\alpha)$. 
The relative invariants of weight $\theta$ constitute a subspace $\coord(\rep(\quiver,\alpha))_{\theta}$ in the coordinate ring $\coord(\rep(\quiver,\alpha))$ of the affine space 
$\rep(\quiver,\alpha)$.  Consider  the graded algebra $\bigoplus_{n=0}^{\infty}\coord(\rep(\quiver,\alpha))_{n\theta}$.  Note that the degree zero part of this graded algebra is the algebra $\coord(\rep(\quiver,\alpha))^{GL(\alpha)}$ of polynomial $GL(\alpha)$-invariants on $\rep(\quiver,\alpha)$, whose generators are described in \cite{lebruyn-procesi}.  
In \cite{king} a  quasiprojective variety $\moduli(\quiver,\alpha,\theta)$ is defined as the {\it projective spectrum}  
\[\moduli(\quiver,\alpha,\theta)=\proj(\bigoplus_{n=0}^{\infty}\coord(\rep(\quiver,\alpha))_{n\theta}).\]  
This is a {\it coarse moduli space} for families of $\theta$-semistable $\alpha$-dimensional representations of $\quiver$ up to S-equivalence in the sense of \cite{newstead}. 
Recall that a representation $R$ of $\quiver$ is called {\it $\theta$-semistable} (resp. {\it $\theta$-stable}) if $\sum_{v\in\quiver_0}\theta(v)\dim_{\mc}(R(v))=0$ and  $\sum_{v\in\quiver_0}\theta(v)\dim_{\mc}(S(v))\ge 0$ (resp. $>0$) for all subrepresentations $S$ of $R$. The points $x$ with $R_x$ $\theta$-semistable constitute a Zariski open (possibly empty) subset 
$\rep(\quiver,\alpha)^{\theta-ss}$ in $\rep(\quiver,\alpha)$. There is a map 
$\pi:\rep(\quiver,\alpha)^{\theta-ss}\to\moduli(\quiver,\alpha,\theta)$ such that $(\moduli(\quiver,\alpha,\theta),\pi)$ is a {\it good quotient} of $\rep(\quiver,\alpha)^{\theta-ss}$ by 
$GL(\alpha)$ (cf. Theorem 3.21 in \cite{newstead}). 

\subsection{Toric Varieties}

We refer to \cite{cox-little-schenck} for basic material on toric varieties. 
By a {\it lattice polytope} we mean the convex hull in $\mr^d$ of a finite subset of the lattice $\mz^d\subset \mr^d$. A lattice polytope $\polytope$ is called {\it normal} if for $k \geq 1$ any lattice point in $k\polytope \cap \mz^d$ can be obtained as a sum of $k$ lattice points from $\polytope \cap \mz^d$.
To a positive dimensional lattice polytope $\polytope \subset \mr^d$, we assign the graded submonoid 
$$\semigr(\polytope) = \{(m,\lambda) \in \mz^d\times\mz |\: m \in \lambda\polytope \}\subset \mz^{d+1}$$ 
with degree $k$ part $\semigr(\polytope)_k = (k\polytope\times\{k\})\cap\mz^{d+1}$. For $s_1,s_2 \in \semigr(\polytope)$ we will write $s_1 \geq s_2$ whenever $s_1$ divides $s_2$ in $\semigr(\polytope)$. By slight abuse of the notation we will identify the sets $\polytope \cap \mz^d$ and $\semigr(\polytope)_1$, and for $m \in \polytope\cap \mz^d$ and $s \in \semigr(\polytope)$ write $m \leq s$ whenever $(m,1)$ divides $s$ in $\semigr(\polytope)$. We will denote by $x^s$ the element of the semigroup algebra $\mc[\semigr(\polytope)]$ corresponding to $s \in \semigr(\polytope)$. 

We define the projective toric variety $X_\polytope$ as $\proj\:\mc[\semigr(\polytope)]$. When $\polytope$ is a normal lattice polytope with lattice points $\polytope \cap \mz^d= \{m_1,\dots,m_k\}$ the ring $\mc[\semigr(\polytope)]$ is generated by $\{x^{m_1},\dots,x^{m_k}\}$, and the closure of the image of the map  
$$(\mc^{*})^d\to \mathbb{P}^{k-1},\quad t \mapsto (t^{m_1}:\dots:t^{m_k})$$ 
is a projectively normal embedding of the variety $X_\polytope$.

By the product of two polytopes $\polytope_1 \subset \mr^{d_1}$ and $\polytope_2 \subset \mr^{d_2}$ we mean the polytope $$\polytope_1\times\polytope_2 = \{(x_1,x_2)\in\mr^{d_1+d_2}|\:x_1\in\polytope_1,\:x_2\in\polytope_2\}.$$ The variety $X_{\polytope_1\times\polytope_2}$ is isomorphic to the product of the varieties $X_{\polytope_1}$ and  $X_{\polytope_2}$.

 The homogeneous vanishing ideal of  $X_\polytope$ in this particular embedding is called the {\it toric ideal} of $\polytope$. More explicitly consider the surjection $\varphi$ from the $k$ variable polynomial ring $R = \mc[t_1, \dots,t_k]$  onto $\mc[\semigr(\polytope)]$ defined as $\varphi(t_i) = x^{m_i}$. The toric ideal of $\polytope$ is just  the ideal $\ker(\varphi)$, which we consider along with the standard grading inherited from the polynomial ring $R$. We say that an ideal $I$ of $R$ is {\it generated in degree two} if $I$ is generated by elements of degree at most two (or $I=0$). 
It is well known that
\begin{equation}\label{eq:ker(phi)}
\ker(\varphi)=\mathrm{Span}_{\mc}\{t^a-t^b\mid \sum_{i=1}^k a_im_i=\sum_{j=1}^k b_jm_j\in\semigr(\polytope)\}
\end{equation} 
where for $a=(a_1,\dots,a_k)\in\mn^k$ we write $t^a=t_1^{a_1}\dots t_k^{a_k}$. The equality $\sum_{i=1}^k a_im_i=\sum_{j=1}^k b_jm_j$ forces $\sum_{i=1}^k a_i=\sum_{j=1}^kb_j$. 
In particular $\ker(\varphi)$ is a homogeneous binomial ideal. The toric ideal of $\polytope$ will be denoted by $\ideal(\polytope)$.

We recall a tool from Section 8 of \cite{domokos-joo} for finding the minimal degree of a generating set for $\ideal(\polytope)$. For an element $s \in \semigr(\polytope)$ we define a relation $\sim^*_s$ on the set $\{m \in \semigr(\polytope)_1 \mid m \leq s\}$, as $m_1 \sim^*_s m_2$ if and only if $m_1 = m_2$ or $m_1 + m_2 \leq s$. We will denote by $\sim_s$ the transitive closure of $\sim^*_s$, i.e. $m_1 \sim_s m_2$ if and only if there is a sequence $u_1,\dots,u_k \in \{m \in \semigr(\polytope)_1 \mid m \leq s\}$ such that $m_1 = u_1$, $m_2 = u_k$ and $u_i+u_{i+1} \leq s$ for all $1 \leq i \leq k-1$. Clearly $\sim_s$ is an equivalence relation. Applying Lemma 8.2 and Corollary 8.3 from \cite{domokos-joo} to our setting we have the following:

\begin{proposition}\label{prop:equivclasses}
Let $\polytope$ be a normal lattice polytope with lattice points $\semigr(\polytope)_1= \{m_1,\dots,m_k\}$. For a homogeneous binomial $t^a-t^b \in \ideal(\polytope)$ of degree $r$, with $s = \sum_{i=1}^k a_im_i=\sum_{j=1}^k b_jm_j$, we have that $t^a-t^b$ is contained in the ideal generated by elements of $\ideal(\polytope)$ of degree at most $r-1$ if and only if $m_i \sim_s m_j$ for some $i,j$ with $a_i \neq 0$ and $b_j \neq 0$. In particular the ideal $\ideal(\polytope)$ is generated by its elements of degree at most $r$ if and only if for every $r' > r$ and $s \in \semigr(\polytope)_{r'}$ the relation $\sim_s$ has precisely one equivalence class.
\end{proposition}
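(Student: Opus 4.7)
The plan is to work with the fibers $F_r(s) := \{c \in \mn^k : \sum_i c_i m_i = s, \sum_i c_i = r\}$, whose elements index the degree-$r$ monomials that can appear in a binomial of $\ideal(\polytope)$ with image $s$, and to let $J$ denote the ideal generated by $\bigcup_{l \leq r-1} \ideal(\polytope)_l$. The basic observation underlying both directions is that $J_r$ is spanned as a $\mc$-vector space by elements of the form $t^w (t^u - t^v)$ where $t^u - t^v \in \ideal(\polytope)_l$ for some $l \leq r-1$ and $\deg w = r - l \geq 1$, so every such ``move'' replaces one monomial by another while retaining a non-trivial common factor $t^w$.

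For the backward direction, I would first record the reduction: whenever $c, c' \in F_r(s)$ share a lattice point $m_i$ in their supports, $t^c - t^{c'} \in J$, because $c - e_i$ and $c' - e_i$ both lie in $F_{r-1}(s - m_i)$ and produce a degree-$(r-1)$ binomial of $\ideal(\polytope)$ that becomes $t^c - t^{c'}$ after multiplication by $t_i$. Given a chain $m_i = n_1, \dots, n_l = m_j$ witnessing $m_i \sim_s m_j$, normality of $\polytope$ lets me extend each consecutive pair $n_k, n_{k+1}$ (where $n_k + n_{k+1} \leq s$, so $s - n_k - n_{k+1} \in \semigr(\polytope)_{r-2}$ breaks into $r-2$ further lattice points) to a decomposition $d^{(k)} \in F_r(s)$ containing both $n_k$ and $n_{k+1}$. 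Then $d^{(k)}$ and $d^{(k+1)}$ share $n_{k+1}$, $a$ shares $m_i = n_1$ with $d^{(1)}$ (using $a_i \neq 0$), and $b$ shares $m_j = n_l$ with $d^{(l-1)}$ (using $b_j \neq 0$); applying the reduction along this chain yields $t^a - t^b \in J$.

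For the forward direction, the first observation is that for any single $c \in F_r(s)$ all lattice points in $\supp(c)$ lie in one $\sim_s$-class, since any two distinct $m_i, m_j \in \supp(c)$ already satisfy $m_i + m_j \leq s$, as witnessed by $c - e_i - e_j \in F_{r-2}(s - m_i - m_j)$. This produces a well-defined map $\chi$ from $F_r(s)$ to the set of $\sim_s$-classes, and the task reduces to showing $\chi(a) = \chi(b)$ whenever $t^a - t^b \in J$. I would do this via a duality argument: for each class $[m]$, define the linear functional $L_{[m]}$ on $R_r$ sending $t^c$ to $1$ if $c \in F_r(s)$ and $\chi(c) = [m]$, and to $0$ otherwise. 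For any spanning element $t^w (t^u - t^v)$ of $J_r$, either neither $w + u$ nor $w + v$ lies in $F_r(s)$ (they have the same image, so this is all or nothing), or both do and share the nonempty $\supp(w)$, forcing $\chi(w+u) = \chi(w+v)$; in either case $L_{[m]}$ annihilates the element, hence all of $J_r$, and so $\chi(a) = \chi(b)$ as desired.

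The ``in particular'' statement then follows by a routine induction on degree. The main conceptual hurdle is the forward direction: one has to turn the rather soft condition $t^a - t^b \in J$ into a combinatorial invariant defined purely on $F_r(s)$, and the passage through $\chi$ together with the dual functionals $L_{[m]}$ is the key step. The backward direction is largely bookkeeping once normality is invoked to extend each link of the $\sim_s$-chain to a full decomposition in $F_r(s)$.
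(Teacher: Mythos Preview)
Your argument is correct. The backward direction is straightforward once you record the key reduction that two decompositions in $F_r(s)$ sharing a common factor differ by an element of $J$; normality is used exactly where it should be, to extend each link $n_k+n_{k+1}\le s$ to a full decomposition $d^{(k)}\in F_r(s)$. The forward direction via the class map $\chi:F_r(s)\to\{\sim_s\text{-classes}\}$ and the separating functionals $L_{[m]}$ is clean: the crucial point is that any spanning element $t^w(t^u-t^v)$ of $J_r$ has $\deg w\ge 1$, so $w+u$ and $w+v$ share a variable and hence land in the same class when they lie in $F_r(s)$.

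As for comparison with the paper: note that the paper does not actually prove this proposition here but imports it from Lemma~8.2 and Corollary~8.3 of \cite{domokos-joo}. Your argument is essentially the standard one and is very likely what appears there, perhaps phrased in terms of a graph on $F_r(s)$ (with edges between decompositions sharing a factor) rather than via the dual functionals $L_{[m]}$; the two formulations are equivalent, since $\chi$ is constant on connected components of that graph and the $L_{[m]}$ are precisely the indicator functions of those components. Either way, the substance is the same: membership in $J_r$ is detected by connectivity of decompositions, and connectivity is governed by $\sim_s$.
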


For a set $H \subseteq \mr^d$ we will denote by $\cone(H)$ the set $\{\lambda x|\:x\in H,\:\lambda\in\mr^+\}$. We say that a convex cone is {\it rational} if it equals $\cone(H)$ for a finite set of integer vectors $H$. By a {\it ray generator} of a rational cone we mean the lattice point closest to the origin on one of its edges.
Let $\polytope$ be a normal lattice polytope. For a vertex $v \in \polytope$ we denote by $U_v$ the principal affine subset of $X_\polytope$ defined by $x^v \neq 0$. It can be deduced from the definition of $X_\polytope$ that $U_v$ is isomorphic to the spectrum of the semigroup algebra $\mc[Cone(\polytope-v)\cap\mz^d])$. Moreover it follows from elementary facts of toric geometry that $U_v$ is smooth if and only if it is an affine space which in turn happens precisely when the ray generators of $Cone(\polytope-v)$ are part of a $\mz$-basis of the lattice $\mz^d$. We will say that $v$ is a {\it smooth vertex} whenever $U_v$ is smooth, and that it is a {\it singular vertex} otherwise.

%%%%%%%%%%%%%%%%%%%%

\subsection{Toric quiver varieties}

We briefly recall the definition and some elementary properties of toric quiver varieties and refer to \cite{domokos-joo} and the references there for further details. 

Let $\quiver$ be a quiver without oriented cycles and set $\alpha = (1,\dots,1)$. The product of general linear groups acting on $\rep(\quiver,\alpha)$ is then just a $|\quiver_0|$ dimensional torus and the moduli spaces $\moduli(\quiver,\alpha,\theta)$ are projective toric varieties, which we call {\it toric quiver varieties}. We will simply write $\moduli(\quiver,\theta)$ to denote the variety $\moduli(\quiver,(1,\dots,1),\theta)$. We define the {\it quiver polytope} 
(called {\it flow polytope} in \cite{schrijver}) 
corresponding to the pair $(\quiver, \theta)$ as, 
\begin{equation}\label{eq:quiverpolytope}\polytope(\quiver,\theta)=\{x\in\mr^{\quiver_1}_{\geq 0}\mid \forall v\in\quiver_0:\quad \theta(v)=\sum_{a^+=v}x(a)-\sum_{a^-=v}x(a)\}.
\end{equation}
\
The polytope $\polytope(\quiver,\theta)$ is a normal lattice polytope (see Theorem 13.14 in \cite{schrijver}). When $\moduli(\quiver,\theta)$ is nonempty it is isomorphic to $X_{\polytope(\quiver,\theta)}$ (see Proposition 3.1 of \cite{domokos-joo}).
While in the current work we only focus on projective toric quiver varieties, we note that the above statements can be generalized to quivers containing oriented cycles, in which case $\polytope(\quiver,\theta)$ is a lattice polyhedron and the variety $\moduli(\quiver,(1,\dots,1),\theta)$ is quasi-projective. 

The class of smooth quiver moduli spaces was characterized  in \cite{bocklandt} by an algorithmic method. A different description was given in \cite{joo} using certain forbidden configurations. For the purposes of the current paper we only need the following proposition regarding smooth vertices, which is a direct consequence of Theorem 6.2 in \cite{domokos-joo}. For $m \in \polytope(\quiver,\theta)$ we denote by $\supp(m)$ the set $\{a\in \quiver_1|\:m(a)\neq 0\}$.
\begin{proposition}\label{prop:smoothvertex}
Let $v$ be a vertex of $\polytope(\quiver,\theta)$. If the quiver with vertex set $\quiver_0$ and arrow set $\supp(v)$ is connected (in the undirected sense) then $v$ is a smooth vertex.
\end{proposition}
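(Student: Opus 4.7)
The plan is to read off the local cone of $\polytope(\quiver,\theta)$ at the vertex $v$ and exhibit a $\mz$-basis of the relevant lattice among its ray generators. Let $\partial\colon\mr^{\quiver_1}\to\mr^{\quiver_0}$ be the incidence map $\partial(e_a)=e_{a^+}-e_{a^-}$, so that the affine hull of $\polytope(\quiver,\theta)$ is a translate of $\ker\partial$ and the ambient lattice is $\ker\partial\cap\mz^{\quiver_1}$. A point $v$ can only be a vertex if the columns of $\partial$ indexed by $\supp(v)$ are linearly independent, which forces $\supp(v)$ to be a sub-forest of the underlying undirected graph of $\quiver$. Combined with the connectedness hypothesis, this means $\supp(v)$ is a spanning tree $\subtree$ of $\quiver$.

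Next I would identify the rays of $\cone(\polytope(\quiver,\theta)-v)$. For each off-tree arrow $a\in\quiver_1\setminus\subtree$, the subgraph $\subtree\cup\{a\}$ contains a unique undirected circuit; traversing it in the direction induced by $a$ produces an integer vector $r_a\in\mz^{\quiver_1}$ with $r_a(a)=1$, entries in $\{-1,0,1\}$ on the tree arrows, and zero on every other off-tree arrow. Since $v(a')>0$ for every $a'\in\subtree=\supp(v)$, the points $v+\varepsilon r_a$ lie in $\polytope(\quiver,\theta)$ for all sufficiently small $\varepsilon>0$. Conversely, the only facet inequalities active at $v$ are $x(a)\ge 0$ for $a\notin\subtree$, so every small feasible displacement from $v$ is a nonnegative combination of the $r_a$. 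Hence $\{r_a\mid a\in\quiver_1\setminus\subtree\}$ is the complete set of ray generators of the local cone.

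Finally, I would verify that this collection is a $\mz$-basis of $\ker\partial\cap\mz^{\quiver_1}$, which by standard toric geometry is exactly the condition for the affine chart $U_v$ to be smooth. Projection onto the coordinates indexed by $\quiver_1\setminus\subtree$ sends $r_a\mapsto e_a$, giving both linear independence and the correct cardinality. For the lattice basis property, any $x\in\ker\partial\cap\mz^{\quiver_1}$ satisfies $x-\sum_{a\notin\subtree}x(a)r_a\in\ker\partial$ with vanishing off-tree entries, and the spanning tree $\subtree$ then propagates this uniquely to zero on the tree arrows as well. The only delicate step is matching the combinatorial description of $\cone(\polytope(\quiver,\theta)-v)$ with an actual ray-by-ray enumeration in which each $r_a$ is primitive; this is essentially the content of Theorem~6.2 of \cite{domokos-joo} cited in the statement, and once that is granted the remainder is elementary linear algebra over $\mz$.
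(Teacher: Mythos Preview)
Your argument is correct. The paper itself gives no proof of this proposition: it is stated as ``a direct consequence of Theorem 6.2 in \cite{domokos-joo}'' and nothing more. What you have written is essentially a self-contained reconstruction of that consequence. The key steps---that a vertex must have acyclic support, that connectedness then forces $\supp(v)$ to be a spanning tree $\subtree$, that the fundamental circuits $r_a$ for $a\notin\subtree$ are the primitive edge directions of the tangent cone, and that they form a $\mz$-basis of $\ker\partial\cap\mz^{\quiver_1}$ via the projection onto the off-tree coordinates---are all correct and constitute the standard argument. Your final sentence slightly undersells your own work: you have in fact carried out the ray-by-ray enumeration and primitivity check (since $r_a(a)=1$), so nothing needs to be ``granted'' from the cited theorem.
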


We will write $\ideal(\quiver,\theta)$ for the ideal $\ideal(\polytope(\quiver,\theta))$ and $\semigr(\quiver,\theta)$ for the monoid $\semigr(\polytope(\quiver,\theta))$. We recall from Theorem 9.3 of \cite{domokos-joo} (the same result was announced in \cite{lenz}, but the proof was withdrawn later in \cite{lenz-withdrawn}):

\begin{theorem}\label{thm:deg3}
Let $\quiver$ be a quiver without oriented cycles, and $\theta$ an integer weight such that $\polytope(\quiver,\theta)$ is non-empty. Then the ideal $\ideal(\quiver,\theta)$ is generated by its elements of degree at most $3$. 
\end{theorem}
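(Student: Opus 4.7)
The plan is to invoke Proposition~\ref{prop:equivclasses}: it suffices to verify, for every $r' \ge 4$ and every $s \in \semigr(\quiver,\theta)_{r'}$, that the equivalence $\sim_s$ on the set of lattice points $m \in \polytope(\quiver,\theta) \cap \mz^{\quiver_1}$ with $m \le s$ has a single class. Unwinding the definitions, such an $s$ is an integer flow on $\quiver$ of net weight $r'\theta$ at each vertex, and the task is to exhibit, for any two unit-weight flows $m, m' \le s$, a chain $m = u_1,\dots,u_k = m'$ of lattice points of $\polytope(\quiver,\theta)$ satisfying $u_i + u_{i+1} \le s$; the chain length $k$ is unconstrained, only the pairwise bound matters. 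This is the combinatorial heart of the theorem.

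The central input should be the theorem of Yamaguchi--Ogawa--Takemura \cite{yamaguchi-ogawa-takemura} which settled the Diaconis--Eriksson conjecture on the Markov degree of contingency tables with fixed marginals; in geometric language it provides degree-three generation for the toric ideals of transportation-type polytopes. To apply it in our setting I would embed $\polytope(\quiver,\theta)$ as a face of such a polytope: using a topological ordering of $\quiver_0$ afforded by acyclicity, unfold $\quiver$ into an auxiliary bipartite quiver $\quiver'$ whose flow polytope $\polytope(\quiver',\theta')$ is of transportation type and contains $\polytope(\quiver,\theta)$ as the face defined by $x(a) = 0$ for arrows $a$ of $\quiver'$ not coming from $\quiver$. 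The observation that makes the face reduction clean is that any lattice point $u \le s$ inside $\polytope(\quiver',\theta')$ lies automatically in $\polytope(\quiver,\theta)$, since non-negativity of $u$ and $s-u$ forces the components of $u$ on the forbidden arrows to be bounded by the corresponding zero components of $s$. Hence any degree-three chain witnessing $\sim_s$ upstairs is, term by term, a valid chain downstairs.

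The main obstacle I expect is the construction of the bipartite unfolding $(\quiver',\theta')$ carefully enough that $\polytope(\quiver',\theta')$ is genuinely a transportation polytope to which Yamaguchi--Ogawa--Takemura applies, and that $\polytope(\quiver,\theta)$ sits inside it as a coordinate-hyperplane face. Acyclicity enters here both to make the topological sort well-defined and to ensure that the split weight data $\theta'$ assigned to source- and sink-copies of the vertices of $\quiver$ are non-negative integers of the right kind. If a single clean embedding is not available, an inductive fallback is to reduce one vertex at a time along the topological order: peel off a source of $\quiver$, apply the Yamaguchi--Ogawa--Takemura bound to the bipartite piece emanating from it, and combine this with an induction hypothesis on the remaining smaller quiver. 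Either way, once connectivity of $\sim_s$ under degree-three steps is established for every relevant $s$, Proposition~\ref{prop:equivclasses} directly yields that $\ideal(\quiver,\theta)$ is generated by binomials of degree at most three.
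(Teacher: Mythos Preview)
The present paper does not prove Theorem~\ref{thm:deg3}; it is quoted as Theorem~9.3 of \cite{domokos-joo}, and the only information offered here is that the argument there proceeds by ``applying a result from \cite{yamaguchi-ogawa-takemura}''. So there is no detailed proof in this paper to compare against, but your identification of Yamaguchi--Ogawa--Takemura as the central external input is exactly what the paper indicates.

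Your face-reduction mechanism is sound. If $\polytope(\quiver,\theta)$ is realised as the face of $\polytope(\quiver',\theta')$ cut out by $x(a)=0$ for the extra arrows $a\in\quiver'_1\setminus\quiver_1$, then for any $s\in\semigr(\quiver,\theta)_{r'}$ every $m\in\semigr(\quiver',\theta')_1$ with $m\le s$ is forced to satisfy $m(a)=0$ on those arrows (since $0\le m(a)\le s(a)=0$), hence lies in $\semigr(\quiver,\theta)_1$; the relation $\sim_s$ is therefore the same relation on the same set in both polytopes, and connectivity transfers downward via Proposition~\ref{prop:equivclasses} exactly as you say.

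What remains is the construction of $(\quiver',\theta')$ and the verification that \cite{yamaguchi-ogawa-takemura} applies to it. A bipartite model of $\polytope(\quiver,\theta)$ is available --- for instance by placing a valency-two sink on each arrow as in the $G^*$ construction used later in the paper, with the sink weights chosen large enough that the induced upper bounds on the arrow values are slack on $\polytope(\quiver,\theta)$ --- and one can then pass to a complete bipartite quiver by adding the missing arrows. Whether the theorem of \cite{yamaguchi-ogawa-takemura} covers the resulting polytope directly, or whether \cite{domokos-joo} instead adapts its combinatorial argument to general quivers without an explicit embedding, cannot be determined from the present paper; that is precisely the gap you flag as ``the main obstacle'', and it is where the substance of the proof in \cite{domokos-joo} lies.
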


We will denote by $\cbgr_{n,k}$ the complete bipartite graph with $n$ vertices on one side of the bipartition and $k$ on the other, and by $\cbq_{n,k}$ the quiver we obtain from $\cbgr_{n,k}$ by orienting each edge so the side with $n$ vertices consists of sources and the side with $k$ vertices consists of sinks. 
Let $\theta$ be the weight on the vertices of $\cbq_{n,n}$ that takes value $1$ on each sink and $-1$ on each source. The quiver polytope $\polytope(\cbq_{n,n},\theta)$ is then isomorphic to the Birkhoff-polytope $B_n$, which is usually defined as the convex hull of $n\times n$ permutation matrices. When $n = 3$, the polytope $B_3$ has $6$ vertices which can be indexed by the permutations $(i,j,k) \in S_3$. One can verify without difficulty that in this case the toric ideal $\ideal(B_3)$ is generated by the degree $3$ binomial 
$$t_{1,2,3}t_{2,3,1}t_{3,1,2} - t_{2,1,3}t_{3,2,1}t_{1,3,2} .$$

%%%%%%%%%%%%%%%%%%%%%%%

\section{Smoothness is preserved by arrow removal}\label{sec:arrowremoval}

An injective morphism of algebras $\iota: R\hookrightarrow S$ is called an {\it algebra retract} if there is a surjective morphism $\varphi:S \twoheadrightarrow R$ such that $\varphi \circ \iota = id_R$. The morphism $\varphi$ is called the {\it retraction map}. When $R$ and $S$ are both graded and the morphisms $\iota,\varphi$ are graded morphisms we call  $\iota: R\hookrightarrow S$ a {\it graded algebra retract}.\par\smallskip

By a complete intersection we shall mean an ideal theoretic complete intersection. Recall that for the graded algebra $S$ the points of the scheme $\proj\:S$ are the homogeneous prime ideals in $S$ that do not contain the irrelevant ideal $S_+$, and the stalk at the point $p \in \proj\:S$ is the homogeneous localization $S_{(p)}$ defined as the subring of degree zero elements in the localized ring $T^{-1}S$, where $T$ consists of the homogeneous elements that are not in $p$. We shall write $S_p$ for the ordinary localization of $S$ at the prime ideal $p$. 

\begin{proposition}\label{prop:proj-retract}
Let $\iota: R\hookrightarrow S$ be a graded algebra retract. Then if the variety $\proj\:S$ is smooth (resp. locally a complete intersection) then $\proj\:R$ is also smooth  (resp. locally a complete intersection).
\end{proposition}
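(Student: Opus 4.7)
The plan is to reduce the statement about $\proj$ to a statement about local ring retracts, then verify that statement by a direct analysis of the maximal ideal. First, I would check that $\varphi$ induces a well-defined map $\proj R \to \proj S$ at the level of points: if $p \subset R$ is a homogeneous prime with $R_+ \not\subseteq p$, then $P := \varphi^{-1}(p) \subset S$ is a homogeneous prime not containing $S_+$ (otherwise $p = \varphi(P) \supseteq \varphi(S_+) = R_+$). Passing to the homogeneous localizations, $\iota$ and $\varphi$ descend to local homomorphisms
$$\bar\iota\colon B := R_{(p)} \hookrightarrow S_{(P)} =: A, \qquad \bar\varphi\colon A \twoheadrightarrow B$$
with $\bar\varphi \circ \bar\iota = \mathrm{id}_B$. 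Hence $B$ is a local retract of $A$, and smoothness (resp.\ being a complete intersection) of $\proj S$ at $P$ is just regularity (resp.\ the c.i.\ property) of $A$. Thus it suffices to prove that these two properties pass from a Noetherian local ring $A$ to any Noetherian local retract $B$.

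Set $I := \ker(\bar\varphi)$, so that the section $\bar\iota$ yields a $B$-module splitting $A = \bar\iota(B) \oplus I$. The core computation is that this splitting descends to the cotangent space as
$$\mathfrak{m}_A/\mathfrak{m}_A^2 = \bar\iota(\mathfrak{m}_B/\mathfrak{m}_B^2) \oplus I/\mathfrak{m}_A I,$$
which follows from expanding $\mathfrak{m}_A = \bar\iota(\mathfrak{m}_B) + I$ and $\mathfrak{m}_A^2 = \bar\iota(\mathfrak{m}_B^2) + \mathfrak{m}_A I$, then intersecting with each summand $\bar\iota(B)$ and $I$ and using $\bar\iota(B) \cap I = 0$. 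By Nakayama, I lift a basis of $I/\mathfrak{m}_A I$ to elements $f_1, \dots, f_c \in I$ that generate $I$ and complete a lift (via $\bar\iota$) of a minimal generating set of $\mathfrak{m}_B$ to a minimal generating set of $\mathfrak{m}_A$.

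If $A$ is regular, then any minimal generating set of $\mathfrak{m}_A$ is an $A$-regular sequence; in particular so is $f_1, \dots, f_c$, whence $B = A/(f_1, \dots, f_c)$ is regular of dimension $\dim A - c$. For the complete intersection case I would pass to completions and write $\widehat A = T/(g_1, \dots, g_e)$ with $T$ regular local and $g_1, \dots, g_e$ a $T$-regular sequence; the cotangent decomposition above guarantees that lifts $\tilde f_j \in T$ of the $f_j$, taken together with $g_1, \dots, g_e$, are linearly independent in $\mathfrak{m}_T/\mathfrak{m}_T^2$, hence form a $T$-regular sequence cutting out $\widehat B$, so $\widehat B$ and therefore $B$ is a complete intersection. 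The most delicate step is the cotangent splitting itself---in particular identifying $I \cap \mathfrak{m}_A^2$ with $\mathfrak{m}_A I$ using only the $B$-module decomposition of $A$---after which both cases reduce to standard regular-sequence arguments inside a regular local ring.
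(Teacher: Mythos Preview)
Your reduction to a retract of local rings via homogeneous localization is correct and is exactly what the paper does. At that point the paper simply invokes Theorem~3.2 of Epstein--Nguyen (algebra retracts of regular, resp.\ locally complete intersection, rings are again regular, resp.\ locally c.i.), whereas you attempt to prove this local statement directly. Your argument for the regular case is fine: the cotangent splitting $\mathfrak{m}_A/\mathfrak{m}_A^2 \cong \bar\iota(\mathfrak{m}_B/\mathfrak{m}_B^2) \oplus I/\mathfrak{m}_A I$ holds (the key identity $I \cap \mathfrak{m}_A^2 = \mathfrak{m}_A I$ follows from $\bar\iota(B) \cap I = 0$ as you indicate), so $f_1,\dots,f_c$ extend to a regular system of parameters and $B \cong A/(f_1,\dots,f_c)$ is regular.

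The complete-intersection case, however, has a genuine gap. Your assertion that $\tilde f_1,\dots,\tilde f_c, g_1,\dots,g_e$ are linearly independent in $\mathfrak{m}_T/\mathfrak{m}_T^2$ fails in general: in a minimal Cohen presentation $\widehat A = T/(g_1,\dots,g_e)$ (with $\dim T = \mathrm{edim}\,\widehat A$) one has $g_i \in \mathfrak{m}_T^2$ for all $i$, so the $g_i$ vanish in $\mathfrak{m}_T/\mathfrak{m}_T^2$. For a concrete failure take $A = k[[x]]/(x^2)$ retracting onto $B = k$: here $T = k[[x]]$, $g_1 = x^2$, $\tilde f_1 = x$, and $(x, x^2)$ is not a regular sequence in $T$. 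The cotangent decomposition you established lives in $\mathfrak{m}_A/\mathfrak{m}_A^2$, where the $g_i$ are already zero, so it gives no information about their images in $\mathfrak{m}_T/\mathfrak{m}_T^2$. A correct argument for the c.i.\ case needs a further ingredient---for instance a comparison of $\mathrm{edim} - \dim$ for $A$ and $B$ that genuinely uses that $B$ is simultaneously a subring and a quotient of $A$, or a homological characterisation of complete intersections---and this is precisely the content of the Epstein--Nguyen theorem the paper cites.
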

 
\begin{proof}
Let $\varphi$ denote the retraction morphism $S \twoheadrightarrow R$. We recall from Proposition 2.10 of \cite{epstein-nguyen} that for any prime ideal $p \in \spec R$ and  $q = \varphi^{-1}(p) \in \spec S$ we have a natural algebra retract of the localized rings $R_p \hookrightarrow S_q$. Since $\iota,\varphi$ preserve the grading if $p$ is a homogenous prime ideal of $R$ then $q = \varphi^{-1}(p)$ is a homogenous prime ideal of $S$ and there is a natural algebra retract of the homogeneous localizations $R_{(p)} \hookrightarrow S_{(q)}$, moreover if $p$ does not contain the irrelevant ideal of $R$ then $q$ does not contain the irrelevant ideal of $S$. Now the proposition follows from Theorem 3.2 of \cite{epstein-nguyen} which asserts that every algebra retract of a regular (resp. locally complete intersection) ring is also a regular (resp. locally complete intersection) ring.
\end{proof}

\begin{proposition}\label{prop:smooth-arrow-removal} 
Let $\quiver'$ be a quiver obtained from $\quiver $ by removing an arrow 
$a\in \quiver_1$. 
\begin{itemize}
\item[(i)] If $\moduli(\quiver,\alpha,\theta)$ is smooth, then $\moduli(\quiver',\alpha,\theta)$ is also smooth (or empty).
\item[(ii)] If $\moduli(\quiver,\alpha,\theta)$ is locally a complete intersection, then $\moduli(\quiver',\alpha,\theta)$ is also locally a complete intersection (or empty). 
\end{itemize}  
\end{proposition}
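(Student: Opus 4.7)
The plan is to reduce to Proposition~\ref{prop:proj-retract} by constructing a graded algebra retract between the two section rings whose $\proj$'s define the moduli spaces in question. The key observation is that removing the single arrow $a$ produces a natural $GL(\alpha)$-equivariant splitting of representation spaces. Concretely, write $\rep(\quiver,\alpha) = \rep(\quiver',\alpha) \oplus W$ where $W = \hom_{\mc}(\mc^{\alpha(a^-)},\mc^{\alpha(a^+)})$; then the projection $\pi:\rep(\quiver,\alpha)\to\rep(\quiver',\alpha)$ dropping the $a$-component and the inclusion $\iota:\rep(\quiver',\alpha)\to\rep(\quiver,\alpha)$ setting the $a$-component to zero are both $GL(\alpha)$-equivariant, and $\pi\circ\iota=id_{\rep(\quiver',\alpha)}$.

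Dualizing yields algebra maps $\pi^{*}:\coord(\rep(\quiver',\alpha))\hookrightarrow\coord(\rep(\quiver,\alpha))$ (pullback along $\pi$) and $\iota^{*}:\coord(\rep(\quiver,\alpha))\twoheadrightarrow\coord(\rep(\quiver',\alpha))$ (restriction along $\iota$) with $\iota^{*}\circ\pi^{*}=id$. Since both $\pi$ and $\iota$ commute with the $GL(\alpha)$-action, the maps $\pi^{*}$ and $\iota^{*}$ are themselves $GL(\alpha)$-equivariant and hence preserve the decomposition into relative invariants of each weight $n\theta$. Restricting to these weight pieces and summing over $n\geq 0$ produces a graded algebra retract
\[
\pi^{*}\colon \bigoplus_{n=0}^{\infty}\coord(\rep(\quiver',\alpha))_{n\theta}\hookrightarrow \bigoplus_{n=0}^{\infty}\coord(\rep(\quiver,\alpha))_{n\theta}
\]
with retraction $\iota^{*}$. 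By construction, $\proj$ of the target is $\moduli(\quiver,\alpha,\theta)$ and $\proj$ of the source is $\moduli(\quiver',\alpha,\theta)$.

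At this point, Proposition~\ref{prop:proj-retract} applied to this graded algebra retract delivers both statements (i) and (ii) simultaneously: smoothness of the ambient $\proj$ forces smoothness of the subretract, and likewise for the local complete intersection property. The ``or empty'' alternative corresponds precisely to the case in which the source ring has no homogeneous prime avoiding its irrelevant ideal, so that $\proj$ is empty and there is nothing to prove. I do not anticipate a serious obstacle here: the substantive content is just the equivariant splitting $\rep(\quiver,\alpha) = \rep(\quiver',\alpha) \oplus W$, and compatibility with the $\theta$-weight grading is automatic from $GL(\alpha)$-equivariance. The only point deserving an explicit verification is that $\pi^{*}$ sends $\coord(\rep(\quiver',\alpha))_{n\theta}$ into $\coord(\rep(\quiver,\alpha))_{n\theta}$ and similarly for $\iota^{*}$, which is a direct transcription of the fact that $\pi,\iota$ intertwine the two $GL(\alpha)$-actions on polynomials of any fixed weight.
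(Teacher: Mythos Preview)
Your proof is correct and follows essentially the same approach as the paper: both exploit the $GL(\alpha)$-equivariant direct-sum splitting $\rep(\quiver,\alpha)=\rep(\quiver',\alpha)\oplus W$ to obtain a graded algebra retract on the rings of relative invariants, and then invoke Proposition~\ref{prop:proj-retract}. Your write-up is slightly more explicit about the dualization and the verification that weight pieces are preserved, but the argument is the same.
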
 

\begin{proof} 
View $\rep(\quiver',\alpha)$ as a direct summand of $\rep(\quiver,\alpha)$ in the obvious way, and denote by $\iota$ the algebra retract $\coord(\rep(\quiver',\alpha))\hookrightarrow\coord(\rep(\quiver,\alpha))$ induced by the projection $\rep(\quiver,\alpha)\to\rep(\quiver',\alpha)$, and by $\varphi$ the corresponding retraction map induced by the embedding $\rep(\quiver',\alpha)\hookrightarrow \rep(\quiver,\alpha)$. Since both $\iota$ and $\varphi$ are $GL(\alpha)$ equivariant  it follows that $\iota(\coord(\rep(\quiver',\alpha))_{n\theta}) \subseteq \coord(\rep(\quiver,\alpha))_{n\theta}$ and $\varphi(\coord(\rep(\quiver,\alpha))_{n\theta}) \subseteq \coord(\rep(\quiver',\alpha))_{n\theta}$, and hence we have the graded algebra retract $$\bigoplus_{n=0}^{\infty}\coord(\rep(\quiver',\alpha))_{n\theta} \hookrightarrow \bigoplus_{n=0}^{\infty}\coord(\rep(\quiver,\alpha))_{n\theta}.$$ 
Now (i) and (ii) follow from Proposition \ref{prop:proj-retract}. 
\end{proof} 

%%%%%%%%%%%%%%%%%%%%%%%%%%%%%%%%%%%%%%%%%%%%%%

\section{Quiver polytopes with degree 3 relations up to dimension 4}\label{sec:quivercells}

For the rest of the paper we assume that $\quiver$ has no oriented cycles. 
The aim of this section is to compile a full list of quiver polytopes whose toric ideals are not generated in degree two up to dimension $4$. 
First we need to clarify when we consider two polytopes to be the same:

\begin{definition}\label{def:isomorphicpolytopes}
{\rm The lattice polytopes  $\polytope_i\subset V_i$ with lattice $M_i\subset V_i$ $(i=1,2)$ are {\it integral-affinely equivalent} 
if there exists an affine linear isomorphism  $\varphi:\affspan(\polytope_1)\to\affspan(\polytope_2)$ of affine subspaces 
with the following properties: 
\begin{itemize}
\item[(i)] $\varphi$ maps $\affspan(\polytope_1)\cap M_1$ onto $\affspan(\polytope_2)\cap M_2$; 
\item[(ii)] $\varphi$ maps  $\polytope_1$ \ onto $\polytope_2$. 
\end{itemize} }
\end{definition} 

It is easily verified that integral-affinely equivalent polytopes have the same toric ideals. Next we recall a reformulation of some of the results in Section 4 of \cite{domokos-joo}, that allow us to classify quiver polytopes in a given dimension. By the {\it underlying graph} of a quiver $\quiver$, we mean the undirected graph we obtain by forgetting the orientation of the arrows of $\quiver$. An undirected graph is  said to be {\it prime}  if it has at least one edge and is biconnected (i.e. it is not the union of two proper subgraphs having at most one common vertex). 
A quiver is called {\it prime} if its underlying graph is prime. It is well known that any undirected graph decomposes into a tree of maximal prime subgraphs, yielding us a decomposition $\quiver^1 \cup \dots \cup \quiver^k$ of the quiver $\quiver$ as the union of maximal prime subquivers. Note that by maximality no two of the $Q^i$ can have more than one vertex in common. Proposition \ref{prop:quiverproducts} below shows, that to obtain a full list of quiver polytopes in a given dimension such that their ideals are not generated in degree two, one only has to consider prime quivers and products of lower dimensional examples. 
In Lemma \ref{lem:productpoly} we recall a well known fact about toric ideals (see \cite[Corollary 2.10]{sullivant2}  or \cite[Proposition 2.7]{ohsugi-hibi_2010} for a more general statement). A short proof is provided using our formalism for the convenience of ther reader.

\begin{lemma}\label{lem:productpoly}
Let $\polytope_1 \subset \mr^{d_1}$ and $\polytope_2 \subset \mr^{d_2}$ be normal lattice polytopes and $\polytope = \polytope_1\times\polytope_2 \subset \mr^{d_1 + d_2}$. Denote the corresponding toric ideals by $\ideal(\polytope)$, $\ideal(\polytope_1)$ and $\ideal(\polytope_2)$. Then $\ideal(\polytope)$ is generated in degree two if and only if both $\ideal(\polytope_1)$ and $\ideal(\polytope_2)$ are generated in degree two.
\end{lemma}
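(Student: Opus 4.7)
The plan is to apply Proposition~\ref{prop:equivclasses}, exploiting the product structure: the lattice points of $\polytope = \polytope_1 \times \polytope_2$ are pairs $(m,n)$ with $m \in \polytope_1 \cap \mz^{d_1}$ and $n \in \polytope_2 \cap \mz^{d_2}$; the graded semigroup decomposes as $\semigr(\polytope)_k = \semigr(\polytope_1)_k \times \semigr(\polytope_2)_k$; and $(m,n) \leq (s_1,s_2)$ in $\semigr(\polytope)$ if and only if $m \leq s_1$ and $n \leq s_2$. Normality of $\polytope$ is inherited from the factors.

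For the ``only if'' direction, assume $\ideal(\polytope)$ is generated in degree two. Fix $r \geq 3$ and $s_1 \in \semigr(\polytope_1)_r$, pick any lattice point $q_0 \in \polytope_2 \cap \mz^{d_2}$, and set $s := (s_1, r q_0) \in \semigr(\polytope)_r$. For any $m, m' \leq s_1$ the pairs $(m, q_0)$ and $(m', q_0)$ lie below $s$, so by hypothesis they are joined by a $\sim_s$-chain $(m, q_0) = (u_0,v_0), (u_1,v_1), \ldots, (u_k,v_k) = (m', q_0)$. Projecting onto the first coordinate yields $m = u_0, u_1, \ldots, u_k = m'$ with each $u_i \leq s_1$ and $u_i + u_{i+1} \leq s_1$, so $m \sim_{s_1} m'$. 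The conclusion for $\polytope_2$ follows by symmetry, and Proposition~\ref{prop:equivclasses} delivers the claim.

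For the ``if'' direction, assume both $\ideal(\polytope_i)$ are generated in degree two. Given $r \geq 3$, $s = (s_1, s_2) \in \semigr(\polytope)_r$ and $(m,n), (m',n') \leq s$, use normality of $\polytope$ to expand
\[
s = (m,n) + \sum_{i=2}^{r}(p_i, q_i) = (m',n') + \sum_{i=2}^{r}(p_i', q_i'),
\]
obtaining length-$r$ decompositions $P, P'$ of $s$ into lattice points of $\polytope$. Setting $p_1 := m$ and $p_1' := m'$, the multisets $\{p_1,\ldots,p_r\}$ and $\{p_1',\ldots,p_r'\}$ are two decompositions of $s_1$, so by the degree-two hypothesis on $\polytope_1$ they are connected by a chain of $2$-swaps $p_i + p_j \leftrightarrow p_i^{\ast} + p_j^{\ast}$. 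Each such swap lifts to a degree-two relation in $\polytope$ by keeping the second coordinates fixed: $(p_i,q_i) + (p_j,q_j) \leftrightarrow (p_i^{\ast},q_i) + (p_j^{\ast},q_j)$. Applying these lifted swaps, then the analogous lifts of $2$-swaps in $\polytope_2$, and finally Segre-type swaps $(p,q) + (p',q') \leftrightarrow (p,q') + (p',q)$ to rearrange the pairing of coordinates, transforms $P$ into $P'$.

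To translate this chain of $2$-swaps into a $\sim_s$-relation between summands, observe that any two summands $z, z'$ of a single length-$r$ decomposition of $s$ satisfy $z + z' \leq s$ because the remaining $r - 2 \geq 1$ terms witness the divisibility; hence $z \sim^{\ast}_s z'$. A short induction on the length of the $2$-swap chain then shows that summands of any two $2$-swap-connected decompositions are $\sim_s$-equivalent, and in particular $(m,n) \sim_s (m',n')$. The hypothesis $r \geq 3$ is used essentially in this ``spectator summand'' step, which is the main subtle point; the remaining verifications (normality of $\polytope$ and validity of the lifted and Segre-type swaps as degree-two relations in $\ideal(\polytope)$) follow directly from the product structure.
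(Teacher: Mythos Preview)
Your proof is correct. The ``only if'' direction matches the paper's argument almost exactly (you take $s_2 = r q_0$ where the paper allows an arbitrary $s_2$ and a single $n \leq s_2$, but this makes no difference).

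The ``if'' direction takes a genuinely different route. The paper works directly with the relation $\sim_s$: given a chain $m_1 = w_1, \ldots, w_j = m_2$ witnessing $m_1 \sim_{s_1} m_2$ (with $j\ge 3$), it lifts this to a chain in $\polytope$ by decorating each $w_i$ with a second coordinate $p_i$ chosen from a fixed decomposition $s_2 = n_1 + u_1 + \cdots + u_{k-1}$ so that consecutive $p_i$ occupy distinct positions in this decomposition and $p_2,p_{j-1}\neq n_1$; this yields $(m_1, n_1) \sim_{(s_1,s_2)} (m_2, n_1)$ directly, and the symmetric argument in the second coordinate finishes. You instead work at the level of full decompositions and $2$-swaps: you use that the degree-two relations of $\polytope$ contain those lifted from $\polytope_1$, those lifted from $\polytope_2$, and the Segre relations, chain these to transform one decomposition of $s$ into another, and then translate back to a $\sim_s$ statement via the ``spectator summand'' observation. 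The paper's route is shorter and stays entirely within the $\sim_s$ formalism of Proposition~\ref{prop:equivclasses}; your route makes explicit the structure of the degree-two generators of $\ideal(\polytope_1\times\polytope_2)$, which is conceptually pleasant and closer in spirit to the toric-fiber-product viewpoint cited in the paper (Sullivant, Ohsugi--Hibi).
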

\begin{proof}
Recall that $S(\polytope)_k = (k\polytope\times\{k\})\cap\mz^{d+1}$ is the degree $k$ part of the graded monoid $S(\polytope)$, and that for $s_1,s_2 \in S(\polytope)$ we write $s_2 \leq s_1$ whenever $s_2$ divides $s_1$ in $S(\polytope)$.

First assume that $\ideal(\polytope)$ is generated in degree two and pick any $s_1 \in S(\polytope_1)_k$ for $k \geq 3$. By Proposition \ref{prop:equivclasses} we need to show that $\sim_{s_1}$ has only one equivalence class.  Choose any $m_1,m_2 \in S(\polytope_1)_1$ such that $m_1,m_2 \leq s_1$,  moreover choose an $s_2 \in  S(\polytope_2)_k$ and $n \in S(\polytope_2)_1$ such that $n \leq s_2$. By the assumption we have $(m_1,n) \sim_{(s_1,s_2)} (m_2,n)$, hence there is a sequence as in the definition of $\sim_s$ starting from $(m_1,n)$ and ending in $(m_2,n)$. Projecting this sequence to the coordinates that correspond to $\polytope_1$ we obtain that $m_1 \sim_{s_1} m_2$.\par

For the other direction let $(s_1,s_2) \in S(\polytope)_k$ ($k \geq 3$) and $(m_1,n_1), (m_2,n_2) \in S(\polytope)_1$ such that $(m_1,n_1),(m_2,n_2) \leq (s_1,s_2)$.  By the assumption that $\ideal(\polytope_1)$ is generated in degree two there is a sequence $w_1,\dots,w_j \in S(\polytope)_1$ such that $w_1 = m_1$, $w_j = m_2$ and $w_i+w_{i+1} \leq s_1$ for all $i = 1,\dots,j-1$. Note that we can always assume $j \geq 3$.  By normality of $\polytope_2$ we have $s_2 = n_1 + u_1 + \dots u_{k-1}$ for some $u_1,\dots,u_{k-1} \in S(\polytope_2)_1$. Since $k,j \geq 3$ it is possible to choose a sequence $p_2,\dots,p_{j-1}$ such that $p_i \in \{n_1,u_1,\dots,u_{k-1}\}$, $p_i \neq p_{i+1}$ and $n_1 \notin \{p_2, p_{j-1}\}$. Now the sequence $(m_1, n_1), (w_2, p_2),\dots,(w_{j-1},p_{j-1}),(m_2,n_1)$ satisfies the conditions in the definition of $\sim_{(s_1,s_2)}$ 
so  $(m_1, n_1) \sim_{(s_1,s_2)} (m_2,n_1)$. Applying the same argument for $n_1$ instead of $m_1$ we have  $(m_1, n_1) \sim_{(s_1,s_2)} (m_2,n_2)$ completing the proof.
\end{proof}

\begin{proposition}\label{prop:quiverproducts}
Let $\quiver$ be a quiver with prime components $\quiver^1, \dots, \quiver^k$, and $\theta$ an integer weight on the nodes of $\quiver$. Then there exist weights $\theta_1, \dots, \theta_k$ on the nodes of  $\quiver^1, \dots, \quiver^k$ respectively, such that $\polytope(\quiver,\theta)$ is integral-affinely equivalent to $\prod_{i=1}^k \polytope(\quiver^i, \theta_i)$. Moreover $\ideal(\quiver, \theta)$ is generated by its elements of degree at most $2$ if and only if $\ideal(\quiver^i, \theta_i)$ is generated by its elements of degree at most $2$ for all $i = 1, \dots, k$.
\end{proposition}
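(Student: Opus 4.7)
The plan is to establish the first assertion (the product decomposition) by induction on the number $k$ of prime components, peeling off one leaf of the block tree at a time; the second assertion then follows by iterating Lemma~\ref{lem:productpoly}, using normality of quiver polytopes. The case $k=1$ is trivial.

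For the inductive step, I would choose a leaf $\quiver^j$ of the block tree, i.e.\ a maximal prime subquiver that meets the remainder of $\quiver$ in a single cut vertex $v$. Let $\quiver'$ be the subquiver with arrow set $\quiver_1\setminus\quiver^j_1$ and vertex set $\quiver_0\setminus(\quiver^j_0\setminus\{v\})$. The essential structural input is that, since $\quiver^j$ is a block of the underlying graph, every arrow of $\quiver$ incident to a vertex of $\quiver^j_0\setminus\{v\}$ already lies in $\quiver^j_1$; equivalently, $\quiver_1=\quiver'_1\sqcup \quiver^j_1$ and $\mr^{\quiver_1}=\mr^{\quiver'_1}\oplus\mr^{\quiver^j_1}$ as lattice spaces. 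Assuming $\polytope(\quiver,\theta)$ is nonempty we have $\sum_{u\in\quiver_0}\theta(u)=0$. Define
\[
\theta_j(u)=\theta(u)\ (u\ne v),\qquad \theta_j(v):=-\sum_{u\in\quiver^j_0\setminus\{v\}}\theta(u),
\]
and $\theta'(u)=\theta(u)$ for $u\in\quiver'_0\setminus\{v\}$, $\theta'(v)=\theta(v)-\theta_j(v)$. Then both $\theta_j$ and $\theta'$ sum to zero on their respective vertex sets.

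Next I would show that the projection $x\mapsto (x|_{\quiver'_1},x|_{\quiver^j_1})$ restricts to an integral-affine isomorphism $\polytope(\quiver,\theta)\to\polytope(\quiver',\theta')\times\polytope(\quiver^j,\theta_j)$. The only nontrivial verification is at the cut vertex $v$: summing the flow equations \eqref{eq:quiverpolytope} for $\quiver$ over all $u\in\quiver^j_0\setminus\{v\}$, the arrows of $\quiver^j_1$ with both endpoints outside $\{v\}$ cancel, and no arrows from $\quiver'_1$ appear (by the structural input above), leaving exactly the identity
\[
\sum_{u\in\quiver^j_0\setminus\{v\}}\theta(u)=\sum_{a\in\quiver^j_1,\,a^-=v}x(a)-\sum_{a\in\quiver^j_1,\,a^+=v}x(a).
\]
Hence $x|_{\quiver^j_1}$ satisfies the flow equation at $v$ with weight $\theta_j(v)$; the flow equation at $v$ for $x|_{\quiver'_1}$ with weight $\theta'(v)=\theta(v)-\theta_j(v)$ then follows by subtraction from the original flow equation of $\quiver$ at $v$. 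Conversely, concatenating any pair $(y,z)\in\polytope(\quiver',\theta')\times\polytope(\quiver^j,\theta_j)$ gives a point of $\polytope(\quiver,\theta)$, and the map is an integral-affine bijection since it is induced by a splitting of the ambient lattice. Applying the induction hypothesis to $\quiver'$, which decomposes into $k-1$ maximal prime subquivers, produces the desired product decomposition, with the final weights $\theta_i$ on each $\quiver^i$ obtained by accumulating the single correction at each cut vertex as it is split off.

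For the second assertion, since every quiver polytope is normal and every product of two quiver polytopes appears in the successive decompositions above, a straightforward induction on $k$ using Lemma~\ref{lem:productpoly} yields that $\ideal(\quiver,\theta)$ is generated in degree at most two if and only if each $\ideal(\quiver^i,\theta_i)$ is. The main place to be careful is bookkeeping in the inductive step: the weight $\theta'(v)$ on $\quiver'$ is modified from $\theta(v)$, and one must check that after all peelings, the contribution gathered at each cut vertex across all blocks containing it sums back to $\theta(v)$—which is automatic because at each peeling step $\theta_j(v)+\theta'(v)=\theta(v)$ by construction.
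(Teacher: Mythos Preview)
Your argument is correct. The induction on the block tree, peeling off a leaf block at each step, is the natural way to prove the product decomposition, and your verification at the cut vertex $v$ is right: summing the flow equations over $\quiver^j_0\setminus\{v\}$ forces the restricted flow $x|_{\quiver^j_1}$ to satisfy the equation at $v$ with weight $\theta_j(v)$, and subtraction gives the equation for $x|_{\quiver'_1}$ with weight $\theta'(v)$. The inverse direction (concatenation) works because every arrow incident to a vertex of $\quiver^j_0\setminus\{v\}$ lies in $\quiver^j_1$ by the block property, so the flow equations split cleanly. Your use of Lemma~\ref{lem:productpoly} for the second assertion, applied inductively along the same decomposition so that at every stage both factors are quiver polytopes (hence normal), is exactly right.

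The paper itself does not spell any of this out: for the first assertion it simply invokes Proposition~4.10 of \cite{domokos-joo}, and for the second it invokes Lemma~\ref{lem:productpoly} directly. So your proof is a self-contained elaboration of what the paper delegates to the earlier reference; the underlying mathematics is the same block-tree decomposition, but you supply the details that the paper omits. One small remark: your parenthetical ``Assuming $\polytope(\quiver,\theta)$ is nonempty'' is unnecessary, since your definitions of $\theta_j$ and $\theta'$ and the bijection you construct make sense and are correct regardless; the nonemptiness assumption is only used to justify the side claim that $\theta'$ sums to zero, which you do not actually need.
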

\begin{proof}
The first statement follows directly from Proposition 4.10 of \cite{domokos-joo}. The second follows from Lemma \ref{lem:productpoly}.
\end{proof}

Next we recall a reformulation of the results in Section 4 of \cite{domokos-joo}, that allow us to classify quiver polytopes that occur in a fixed dimension. By the {\it valency} of a vertex of a quiver (resp. undirected graph)  we mean the number of arrows (resp. edges) incident to it. A $3$-regular graph is just a graph in which every vertex has valency $3$. We remind that we allow quivers (and graphs) to have multiple arrows (resp. edges). For an undirected graph $G$ we will denote by $G^*$ the quiver we obtain by placing a valency $2$ sink on each edge of $G$, as illustrated on the figure below.
\[
\begin{tikzpicture}[>=open triangle 45,scale=0.8] 

\foreach \x in {(0,0),(0,4)} \filldraw \x circle (2pt);
\draw  [-]  (0,0)--(0,4);
\draw  [-]  (0,0) to [out=45,in=-45] (0,4);
\draw  [-]  (0,0) to [out=135,in=-135] (0,4);

\node[left] at (-3,4) {$G$};

\foreach \x in {(8,0),(8,2),(6,2),(10,2),(8,4)} \filldraw \x circle (2pt);

\node[left] at (7,4) {$G^*$};
\draw  [->]  (8,0)--(8,2);
\draw  [->]  (8,4)--(8,2);
\draw  [->]  (8,0)--(6,2);
\draw  [->]  (8,4)--(6,2);
\draw  [->]  (8,0)--(10,2);
\draw  [->]  (8,4)--(10,2);

\end{tikzpicture}
\]
 By a {\it prime} lattice polytope we mean a lattice polytope that is not the product of lattice polytopes of strictly smaller dimensions.

\begin{theorem}\label{thm:allquivers}
Let $\polytope(\quiver,\theta)$ be a prime quiver polytope of dimension $d \geq 2$. Then there exist a prime $3$-regular graph $G$, without loops, on $2d-2$ nodes and an integer weight $\theta^*$ on the nodes of $G^*$ such that $\polytope(\quiver,\theta)$ is integral-affinely equivalent to $\polytope(G^*,\theta^*)$.
\end{theorem}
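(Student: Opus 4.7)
The plan is to invoke the reduction theory developed in Section~4 of \cite{domokos-joo}, which supplies a list of operations on a pair $(\quiver,\theta)$ preserving $\polytope(\quiver,\theta)$ up to integral-affine equivalence, and then to read off the structure of a fully reduced representative. The basic moves are: (a) removing an arrow whose flow value is forced by the balance condition at an endpoint (for example, a vertex of valency one), with a corresponding adjustment of $\theta$ on the surviving vertices; and (b) contracting a valency-two vertex $v$ that is neither a source nor a sink by composing its unique in-arrow with its unique out-arrow into a single arrow. Starting from a prime pair $(\quiver,\theta)$ and iterating these moves until none is applicable yields a pair $(\quiver',\theta')$, integral-affinely equivalent to $(\quiver,\theta)$, in which every vertex is either a source or a sink, each sink has valency exactly two, and (after a global reversal of all arrows if necessary) every source has valency exactly three.

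By construction, such a reduced $\quiver'$ has the form $G^*$, where $G$ is the undirected graph whose vertex set is the set of sources of $\quiver'$ and whose edges correspond to pairs of sources joined via a common sink. Because every source of $\quiver'$ has valency three, $G$ is $3$-regular. A loop in $G$ would correspond to a sink of $\quiver'$ attached twice to the same source, which forces the two flows to be equal and permits a further reduction (or, failing that, contradicts primality), hence $G$ has no loops. Primality of $G$ transfers from that of $\quiver$: the reduction moves preserve biconnectedness of the underlying graph, and $G^*$ is prime precisely when $G$ is.

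A node count then finishes the argument. If $G$ has $n$ vertices, it has $3n/2$ edges, so $G^*$ has $n + 3n/2 = 5n/2$ vertices and $3n$ arrows. Since $\quiver'$ is connected and acyclic and $\polytope(G^*,\theta^*)$ is full-dimensional inside the affine flow subspace, one has
$$d \;=\; \dim \polytope(G^*,\theta^*) \;=\; |\quiver'_1| - |\quiver'_0| + 1 \;=\; 3n - \tfrac{5n}{2} + 1 \;=\; \tfrac{n}{2} + 1,$$
so $n = 2d - 2$, as asserted.

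The main obstacle I anticipate is the first step: verifying precisely that the fully reduced prime quiver has the stated valency pattern, and in particular ruling out sources of valency four or greater. This is the content of the classification in Section~4 of \cite{domokos-joo}, which must supply an additional structural move or dimension argument showing that any higher-valency source in a prime reduced quiver can be split into a subquiver of valency-three sources without altering the integral-affine equivalence class of the polytope.
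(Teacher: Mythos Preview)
The paper does not actually prove this theorem: it is introduced with the sentence ``Next we recall a reformulation of the results in Section~4 of \cite{domokos-joo}'' and is invoked as a black box thereafter. Your proposal is therefore aligned with the paper's treatment, since you likewise defer to Section~4 of \cite{domokos-joo} for the substance of the argument, and there is no in-paper proof to compare against.

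Your sketch of how that argument should run is broadly reasonable, and your node count agrees with the observation recorded in the paper that $\chi(G^*)=d$ when $G$ is connected $3$-regular on $2d-2$ vertices. Two points are glossed over, however. First, beyond the high-valency source issue you flag, it is not clear from moves (a) and (b) alone why the reduced quiver should have all \emph{sinks} of valency exactly two; the operations you list do not force this, and one typically needs an arrow-subdivision step (placing a valency-two sink on each arrow) together with a vertex-splitting move to arrive at the $G^*$ shape. Second, your node count uses $d=\chi(G^*)$, i.e.\ full-dimensionality of $\polytope(G^*,\theta^*)$ inside the affine flow space, whereas in general one only has $\dim\polytope(\quiver,\theta)\le\chi(\quiver)$; equality is a consequence of the reduced pair being \emph{tight}, which is part of what the machinery in \cite{domokos-joo} supplies. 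These are precisely the ``additional structural moves'' and dimension arguments you anticipate needing in your final paragraph, so your self-assessment of the gap is accurate.
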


In \cite{domokos-joo} we applied Theorem \ref{thm:allquivers} to show that it is possible to list every toric quiver variety in a given dimension. However we estimate these lists in dimension $4$ and higher to be extremely long. More importantly to each projective toric quiver variety there are an infinite number of quiver polytopes associated,  hence there did not seem to be any obvious way to achieve our goal via direct computation. Instead we follow an approach similar to that of \cite{haase}, where it was shown that amongst the $3\times3$ transportation polytopes, only the Birkhoff polytope $B_3$ yields a toric ideal which is not generated in degree two. This is a special case of our result, since $3\times 3$ transportation polytopes are quiver polytopes of the bipartite quiver $\cbq_{3,3}$. The key tool in their proof was to use a hyperplane subdivision to decompose the polytopes into "cells", which are subpolytopes of facet width $1$, and then carry out a case by case analysis of the - finitely many - cells that occur. \par\smallskip
For a lattice polytope $\polytope \subset \mr^d$ and an integer vector $\uk \in \mz^d$ we define the {\it $\uk$-cell} of $\polytope$ to be $$\polytope_{\uk} = \{ x\in \polytope \mid \uk(i) \leq x(i) \leq \uk(i)+1 \:\forall( i: 1 \leq i \leq d) \}.  $$

By the $\unull$-cell of $\polytope$ we just mean the $k$-cell for $k = (0,\dots,0)$. For a quiver polytope $\polytope(\quiver,\theta)$ and a non-negative integer vector $k \in \mn^{\quiver_1}$  we will denote by $\theta_{\uk}$ the weight defined by $$\theta_{\uk} (v) =  \sum_{a^+=v}\uk (a)-\sum_{a^-=v}\uk (a),$$
i.e. $\theta_{\uk}$ is the unique weight $\theta'$ such that $\uk \in \polytope(\quiver,\theta')$. \par\smallskip
We will call the non-empty cells that can be obtained from quiver polytopes {\it quiver cells}. Since quiver polytopes always lie in the positive quadrant it suffices to consider cells defined by $\uk \in \mn^{\quiver_1}$. As we will show in the next proposition, one only has to consider a finite set of weights to obtain a complete list of quiver cells (up to translation) associated to a fixed quiver.
\begin{lemma}\label{lemma:finitecells}
Let $\quiver$ be a quiver and $\uk \in \mn^{\quiver_1}$. 
\begin{itemize}
\item[(i)] For any integer weight $\theta$ we have that  $\polytope(\quiver,\theta)_{\uk} = \polytope(\quiver,\theta - \theta_\uk)_\unull + \uk$.
\item[(ii)] There are only finitely many different weights $\theta$, such that $\polytope(\quiver,\theta)_\unull$ is non-empty.
\end{itemize} 
\end{lemma}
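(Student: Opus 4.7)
The proof should be a direct translation argument for part (i), and a compactness/boundedness observation for part (ii). Neither part seems to hold any serious obstacle; the content is essentially bookkeeping with the defining equations \eqref{eq:quiverpolytope}.

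For part (i), my plan is to consider the affine map $\tau_{\uk}:\mr^{\quiver_1}\to\mr^{\quiver_1}$ given by $\tau_{\uk}(y)=y+\uk$ and show that it restricts to a bijection between $\polytope(\quiver,\theta-\theta_\uk)_\unull$ and $\polytope(\quiver,\theta)_\uk$. Given $y\in\polytope(\quiver,\theta-\theta_\uk)$, for each vertex $v$ we compute
\[
\sum_{a^+=v}(y(a)+\uk(a))-\sum_{a^-=v}(y(a)+\uk(a))=(\theta(v)-\theta_\uk(v))+\theta_\uk(v)=\theta(v),
\]
so $\tau_{\uk}(y)$ satisfies the flow equations for weight $\theta$. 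The box condition $\unull\leq y\leq \mathbf{1}$ is clearly equivalent to $\uk\leq\tau_{\uk}(y)\leq\uk+\mathbf{1}$. The non-negativity constraint is maintained because $\uk\in\mn^{\quiver_1}$, so $y\geq\unull$ implies $\tau_{\uk}(y)\geq\unull$, and conversely, $x\in\polytope(\quiver,\theta)_\uk$ automatically satisfies $x\geq\uk\geq\unull$ so $x-\uk\geq\unull$. This gives the claimed equality of lattice polytopes.

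For part (ii), I would argue that the set of weights $\theta$ for which $\polytope(\quiver,\theta)_\unull\neq\emptyset$ lies inside a bounded region of the integer lattice $\mz^{\quiver_0}$. Indeed, if $x\in\polytope(\quiver,\theta)_\unull$, then $0\leq x(a)\leq 1$ for every arrow $a\in\quiver_1$, and the flow equations force
\[
-\bigl|\{a\in\quiver_1\mid a^-=v\}\bigr|\ \leq\ \theta(v)=\sum_{a^+=v}x(a)-\sum_{a^-=v}x(a)\ \leq\ \bigl|\{a\in\quiver_1\mid a^+=v\}\bigr|
\]
for each vertex $v$. Since the in- and out-valencies are fixed finite quantities depending only on $\quiver$, each coordinate $\theta(v)$ of an admissible integer weight lies in a finite set, whence only finitely many $\theta\in\mz^{\quiver_0}$ can arise.

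There is no real obstacle here; the only thing to be careful about is that part (i) uses the positivity of the entries of $\uk$ to guarantee that translation by $\uk$ preserves the positive-quadrant constraint in the definition of a quiver polytope, and that part (ii) uses the integrality hypothesis on $\theta$ to turn boundedness into finiteness.
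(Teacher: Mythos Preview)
Your proof is correct and follows essentially the same approach as the paper: part (i) is the translation argument that the paper simply declares ``immediate from the definition of quiver polytopes'', and your valency bound in part (ii) is exactly the inequality the paper writes down. The only difference is that you have spelled out the details of (i) explicitly, which is fine.
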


\begin{proof}
(i) follows immediately from the definition of quiver polytopes. For (ii) note that the lattice points of $\polytope(\quiver,\theta)_\unull$ take values $\{0,1\}$ on each edge, hence if the polytope is non-empty, $\theta$ satisfies $$-|\{a\in \quiver_1 \mid a^- = v\}| \leq \theta(v) \leq |\{a\in \quiver_1 \mid a^+ = v\}|, $$ for each vertex $v \in \quiver_0$.
\end{proof}

It follows from Proposition 2.2 of \cite{domokos-joo} that quiver cells are also quiver polytopes (see Theorem~\ref{thm:listcells} (ii) below for a more precise statement), in particular they are normal lattice polytopes and their ideals of relations are generated in degree at most $3$.  \par\smallskip

Denote by $M^{\quiver}$ the lattice consisting of integer vectors $m \in \mz^{\quiver_1}$  satisfying
\begin{equation}\label{eq:cycle} 
\sum_{a^+=v}m(a)-\sum_{a^-=v}m(a) = 0,
\end{equation}
for all $v \in \quiver_0$.  By an {\it alternating cycle} of the quiver we mean an element $c \in M^{\quiver}$ with entries in the set $\{0,1,-1\}$, satisfying that $\mbox{supp}(c)$ is a primitive cycle of the underlying graph of $\quiver$ (i.e. a cycle that does not run through the same vertex twice). It is not hard to show that the alternating cycles generate the lattice $M^{\quiver}$. Moreover a simple inductive argument shows that any lattice point $m \in M^{\quiver}$ can be greedily decomposed as a sum of alternating cycles $c_1,\dots,c_l$, such that the coordinates of the $c_i$ are either zero or have the same sign as the corresponding coordinate of $m$. Alternatively one can derive this statement from Theorem 21.2 from \cite{schrijver:linear} along with the discussion that follows it and the fact that vertex-arrow incidence matrices of quivers are totally unimodular. 
Lemma~\ref{lemma:centering} shows us why cells play an important role in studying the generators of toric ideals. 

\begin{lemma}\label{lemma:centering}
For a quiver polytope $\polytope(\quiver,\theta)$, and a relation $b = t_{m_1}t_{m_2}t_{m_3}-t_{n_1}t_{n_2}t_{n_3} \in \ideal(\quiver,\theta)$ let $\uk \in \mn^{\quiver_1}$ be such that $(m_1 + m_2 + m_3)/3 \in \polytope(\quiver,\theta)_\uk$. Then there exist $m_1',m_2',m_3' \in \polytope(\quiver,\theta)_\uk$ and $n_1',n_2',n_3' \in \polytope(\quiver,\theta)_\uk$ such that $b' =  t_{m'_1}t_{m'_2}t_{m'_3}-t_{n'_1}t_{n'_2}t_{n'_3}\in \ideal(\quiver,\theta)$ and $b$ is contained in an ideal generated by $b'$ and some quadratic elements of $\ideal(\quiver,\theta)$. 
In particular, $\ideal(\quiver,\theta)$ is generated by the toric ideals of the cells 
$\polytope(\quiver,\theta)_{\uk}$ together with some quadratic relations. 
\end{lemma}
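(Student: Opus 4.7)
The plan is to build $\{m_i'\}$ and $\{n_i'\}$ not by an abstract normality argument but as the endpoints of two independent sequences of \emph{quadratic moves}, applied to $\{m_1,m_2,m_3\}$ and $\{n_1,n_2,n_3\}$ respectively. A quadratic move replaces a pair $(m_i,m_j)$ by $(u,v)$ with $u+v=m_i+m_j$ and $u,v\in\polytope(\quiver,\theta)\cap\mz^{\quiver_1}$, and is witnessed by a degree-two binomial of $\ideal(\quiver,\theta)$. Once the two sequences land inside $\polytope(\quiver,\theta)_\uk$, telescoping produces a cubic binomial $b'=t_{m_1'}t_{m_2'}t_{m_3'}-t_{n_1'}t_{n_2'}t_{n_3'}$ in the cell's toric ideal such that $b-b'$ lies in the ideal generated by the quadratic moves used, giving the conclusion. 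The ``in particular'' clause is then immediate from Theorem~\ref{thm:deg3}: cubic generators of $\ideal(\quiver,\theta)$ can, cell by cell, be replaced modulo quadratics by relations supported in a single cell.

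To manufacture the move sequence I would introduce the integer-valued potential
$$\Phi(\{m_1,m_2,m_3\})=\sum_{i=1}^{3}\sum_{a\in\quiver_1}f_a(m_i(a)),\qquad f_a(y):=\max\{0,y-\uk(a)-1\}+\max\{0,\uk(a)-y\},$$
which is non-negative and vanishes exactly when all three points lie in $\polytope(\quiver,\theta)_\uk$. The core claim is that $\Phi>0$ always admits a quadratic move that strictly decreases it. Suppose $m_1(a)>\uk(a)+1$ at some arrow $a$ (the case $m_1(a)<\uk(a)$ is entirely symmetric). The averaging hypothesis $(m_1+m_2+m_3)(a)/3\le\uk(a)+1$ forces $m_j(a)\le\uk(a)$ for some $j\in\{2,3\}$. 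Apply the greedy decomposition recalled just before the lemma to write $m_1-m_j\in M^{\quiver}$ as a sum of alternating cycles whose entries are zero or agree in sign with those of $m_1-m_j$, and pick a summand $c$ with $c(a)=+1$. Sign-matching then makes $u:=m_1-c$ and $v:=m_j+c$ non-negative integer flows of weight $\theta$, so $(m_1,m_j)\mapsto(u,v)$ is a legitimate quadratic move.

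The strict decrease of $\Phi$ rests on convexity: each $f_a$ is convex and piecewise linear, so for any $x>y$ one has $f_a(x-1)+f_a(y+1)\le f_a(x)+f_a(y)$. At every arrow $b\in\supp(c)$ the move replaces $(m_1(b),m_j(b))$ by $(m_1(b)-c(b),m_j(b)+c(b))$, and sign-matching means it moves the two values toward each other, so $f_b$-contributions cannot grow; at the distinguished arrow $a$ the stronger inequalities $m_1(a)\ge\uk(a)+2$ and $m_j(a)\le\uk(a)$ force a drop of at least $1$. Since $\Phi\in\mz_{\ge 0}$, iteration terminates in finitely many steps with a triple contained in $\polytope(\quiver,\theta)_\uk$, and the same procedure for $\{n_i\}$ yields $\{n_i'\}$. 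The main technical obstacle is precisely the control of $\Phi$ at \emph{every} arrow of $c$ simultaneously — pushing one value at $a$ toward the cell could in principle push other coordinates of $m_1$ or $m_j$ farther away — and it is the combination of convexity of $f_a$ with the sign-matching property of the greedy cycle decomposition that overcomes this obstacle.
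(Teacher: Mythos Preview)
Your proof is correct and essentially identical to the paper's: your potential $\Phi$ is exactly the paper's $D(m_1)+D(m_2)+D(m_3)$ (since $f_a$ coincides with the paper's distance $d_a$ to $\{\uk(a),\uk(a)+1\}$), and your quadratic move via a sign-matching alternating cycle is the same move the paper performs. The only cosmetic difference is that you package the coordinate-wise non-increase as convexity of $f_a$, whereas the paper verifies the same inequality by a short case analysis.
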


\begin{proof}
We need to show that we can transform $m_1,m_2,m_3$ into 
$m_1',m_2',m_3'\in \polytope(\quiver,\theta)_\uk$ by successively replacing a pair of lattice points with a new pair of lattice points that have the same sum. For 
$n\in \mz^{\quiver_1}$ and an arrow $a\in \quiver_1$ denote by $d_a(n)$ the distance of $n(a)$ from the set $\{\uk(a),\uk(a)+1\}$ and set 
$$D(m_i) = \sum_{a\in\quiver_1} d_a(m_i).$$ If $D(m_1) + D(m_2) + D(m_3) = 0$ we are done since $D(m_i) = 0$ if and only if $m_i \in  \polytope(\quiver,\theta)_\uk$.
Otherwise we will show that we can replace two of $m_1,m_2,m_3$ with new lattice points from $\polytope(\quiver,\theta)$ having the same sum, such that the value of $D(m_1) + D(m_2) + D(m_3)$ strictly decreases, and  - since the $D(m_i)$ are all non-negative integers - in finitely many steps the sum will be $0$.  So suppose that $D(m_1) + D(m_2) + D(m_3) > 0$. Then one of the $m_j$, say $m_1$ does not lie in the cell $\polytope(\quiver,\theta)_\uk$. Thus we may assume that say $m_1(a) < \uk(a)$ for some $a \in \quiver_1$ 
(the case $m_1(a)>\uk(a)+1$ can be dealt with similarly). Since $\uk(a) \leq (m_1(a) + m_2(a) + m_3(a))/3$ it follows that one of $m_2(a)$ and $m_3(a)$ has to be at least $k(a)+1$. Assume that $m_2(a) \geq k(a)+1$. Now since $m_2-m_1 \in M^{\quiver}$ it decomposes as a sum of alternating cycles $c_1 + \dots + c_l$. Since $m_2(a) > m_1(a)$ we have that $c_j(a) = 1$  for some $j$. Recall that $c_1,\dots,c_l$ can be chosen such that their  coordinates are either zero or have the same sign as the corresponding coordinate of $m_2-m_1$. Therefore the lattice points $m_1+c_j, m_2 - c_j$ have non-negative entries, hence   $m_1 + c_j, m_2 - c_j \in \polytope(\quiver,\theta)$. We claim that for any $b \in \quiver_1$ we have that 
\begin{eqnarray}\label{eq:claim}
d_b(m_1) + d_b(m_2)  \geq d_b(m_1+c_j) +d_b(m_2 - c_j) 
\\ \notag \mbox{with strict inequality when }b = a.
\end{eqnarray} 
If $c_j(b) = 0$ then the inequality \eqref{eq:claim} holds  trivially. If $c_j(b) \neq 0$ then $m_1(b) \neq m_2(b)$ and $c_j(b) = \mathrm{sign}(m_2(b)-m_1(b))$. It follows that 
$$|d_b(m_1+c_j) - d_b(m_1)| \leq 1 \quad \mbox{ and }\quad |d_b(m_2-c_j) - d_b(m_2)| \leq 1$$ and an easy case-by-case analysis taking into account the possible relative positions of $m_1(b)$, $m_2(b)$ and $\uk(b)$ shows \eqref{eq:claim}. 
Moreover in the case $b = a$, we have that $c_j(a) = 1$ and $$d_a(m_1) > d_a(m_1+1),$$
since $m_1(a) < \uk(a)$, and $$d_a(m_2) \geq d_a(m_2-1),$$
since $m_2(a) \geq \uk(a)+1$. It follows from \eqref{eq:claim} that $$D(m_1) + D(m_2) + D(m_3) > D(m_1 + c_j) + D(m_2-c_j) + D(m_3)$$ and induction completes the proof.
\end{proof}

\begin{remark} {\rm 
(i) The above argument differs from Section 2 of \cite{haase} which depends on the fact that in the case of $3\times 3$ transportation polytopes the cells that do not yield cubic relations, also possess a quadratic Gr\"obner basis (in fact they are simplices). 

(ii) An alternative proof of Lemma~\ref{lemma:centering} could be derived from Theorem 6.2 in \cite{bruns}, which  is proven using several facts about the defining ideals of monoidal complexes.

(iii) The relation $\sim_s$ as a relation  on the set $\{m\in \semigr(\polytope(\quiver,\theta)_{\uk})_1\mid m\le s\}$ is not the same as the restriction to  $\{m\in \semigr(\polytope(\quiver,\theta)_{\uk})_1\mid m\le s\}$ of the corresponding relation $\sim_s$ on the  set 
$\{m\in \semigr(\quiver,\theta)_1\mid m\le s\}$. }
\end{remark}

Lemma \ref{lemma:finitecells} provides us with a way to calculate a complete list of quiver cells that can be obtained from a fixed quiver. One needs to check for a finite set of weights whether the polytope $\polytope(\quiver,\theta)_\unull$ is non-empty. Our next goal is to show that the list of weights to be considered can be singificantly reduced  if we are only interested in full-dimensional cells. 

 For a quiver $\quiver$ we set $\chi(\quiver) = |\quiver_1|-|\quiver_0|+\chi_0(\quiver)$, where $\chi_0(\quiver)$ denotes the number of connected components of $\quiver$. Note that if $G$ is a connected $3$-regular graph on $2d-2$ vertices then $\chi(G^*) = d$. Set $H$ to be the affine subspace of $\mr^{\quiver_1}$ defined by the set of equations 
 $\{\theta(v)=\sum_{a^+=v}x(a)-\sum_{a^-=v}x(a)\mid v\in\quiver_0\}$. It is well known that $\dim(H) = \chi(\quiver)$ (see for example (13.26) in \cite{schrijver}). Since  $\polytope(\quiver,\theta)$ is just the intersection of $H$ with the positive quadrant it follows that  $\dim(\polytope(\quiver,\theta)) \leq \chi(\quiver)$, for any weight $\theta$.

We first  prove the following Theorem:

\begin{theorem}\label{thm:listcells}
\begin{itemize}
\item[(i)] For a graph $G$ and a non-empty quiver cell $\polytope(G^*, \theta)_\uk$, there exists a  subgraph $H$ of $G$ on the same set of vertices and there exists a weight 
$\theta'\in \mz^{H^*_1}$   that takes value $1$ on every sink of $H^*$, such that $\polytope(G^*, \theta)_\uk$ is integral-affinely equivalent to $\polytope(H^*, \theta')$. Moreover if $G$ is prime and $\dim (\polytope(G^*, \theta)_\uk) = \chi(G^*)$ then $H = G$. 
\item[(ii)] Let $G$ be a prime 3-regular graph on $2d-2$ vertices ($d \geq 2$). Then every $d$-dimensional quiver cell $\polytope(G^*, \theta)_\uk$ is integral-affinely equivalent to  $\polytope(G^*, \theta')$, where $\theta'$ is a weight that  takes value $1$ on every sink of $G^*$, value $-1$ on $d-1$ of the sources in $G^*$ and value $-2$ on the remaining $d-1$  sources.
\end{itemize}
\end{theorem}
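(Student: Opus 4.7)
The plan is to reduce both parts to a pinning analysis at the sinks of $G^*$, combined with cycle-rank and dimension bookkeeping. Using Lemma~\ref{lemma:finitecells}(i), first translate by $-\uk$, replacing $\theta$ by $\tilde\theta = \theta - \theta_\uk$, so that the cell becomes $\polytope(G^*, \tilde\theta)_\unull$ and every lattice point of it has coordinates in $\{0,1\}$. At each sink $w_e$ of $G^*$ (corresponding to an edge $e$ of $G$) the two arrows entering $w_e$ sum to $\tilde\theta(w_e)\in\{0,1,2\}$: when $\tilde\theta(w_e)=0$ both are pinned to $0$, and when $\tilde\theta(w_e)=2$ both are pinned to $1$. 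Let $H\subseteq G$ be the spanning subgraph whose edges are those $e$ with $\tilde\theta(w_e)=1$. Eliminating the pinned arrow coordinates yields an integral-affine equivalence between the translated cell and $\polytope(H^*,\theta')$, where $\theta'(w_e)=1$ on every sink of $H^*$ and, at each source $u$, $\theta'(u)$ equals $\tilde\theta(u)$ shifted by the number of edges $e$ of $G$ not in $H$ that are incident to $u$ and satisfy $\tilde\theta(w_e)=2$ (the correction accounts for the omitted arrows that have been fixed to $1$).

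For the moreover clause of (i), assume the cell has dimension $\chi(G^*)$ yet $H\subsetneq G$, and pick an edge $e$ of $G$ not in $H$. Since $G$ is prime (biconnected with at least two edges) it is bridgeless, so $e$ lies on a cycle of $G$ and $\chi(G-e)=\chi(G)-1$; further edge deletions cannot raise the cycle rank, hence $\chi(H)<\chi(G)$. As $H$ and $H^*$ share the same number of connected components one checks $\chi(H^*)=\chi(H)$, and consequently $\dim\polytope(H^*,\theta')\le\chi(H^*)<\chi(G^*)$, a contradiction. Thus $H=G$. For part (ii), since $\chi(G^*)=d$, this forces $H=G$ and $\theta'(w_e)=1$ on every sink. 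Each source $u$ has valency $3$ in $G^*$, so the three arrows at $u$ take values in $[0,1]$ with sum $-\theta'(u)\in\{0,1,2,3\}$; hence $\theta'(u)\in\{0,-1,-2,-3\}$. If $\theta'(u)\in\{0,-3\}$, then all three arrows at $u$ would be pinned to a common value ($0$ or $1$) throughout the polytope; but $G$ bridgeless implies $G^*$ is bridgeless as well, so every arrow of $G^*$ lies on some cycle and its value is not determined by the flow equations alone. Pinning any such arrow therefore strictly cuts the $d$-dimensional flow subspace by a proper hyperplane, reducing the polytope dimension below $d$ and contradicting $\dim=d$. Hence $\theta'(u)\in\{-1,-2\}$ at every source.

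Writing $n_i$ for the number of sources with $\theta'(u)=-i$, the identities $n_1+n_2=2d-2$ (total sources) and $n_1+2n_2=3d-3$ (from the balance $\sum_v\theta'(v)=0$ combined with the $3d-3$ sinks each of weight $1$) force $n_1=n_2=d-1$, completing (ii). The most delicate point is verifying in (i) that eliminating the pinned arrows produces a genuine integral-affine equivalence of polytopes rather than merely a bijection of lattice points: the shift of source weights by the count of pinned-at-$1$ arrows at each vertex is precisely what matches the remaining flow equations on the two sides. Once this is established, the moreover clause of (i) and all of (ii) reduce to the elementary cycle-rank and linear-independence arguments sketched above.
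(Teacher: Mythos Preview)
Your proposal is correct and follows essentially the same route as the paper's proof. Both arguments translate to the $\unull$-cell via Lemma~\ref{lemma:finitecells}(i), classify each sink weight as $0$, $1$, or $2$, eliminate the arrows pinned to $0$ or $1$, and record the resulting shift in source weights; both then use cycle-rank bookkeeping for the ``moreover'' clause and the balance $\sum_v\theta'(v)=0$ to finish~(ii). Your justification that edge deletion in a bridgeless $G$ strictly lowers $\chi$, combined with the identity $\chi(H^*)=\chi(H)$, is a slightly cleaner packaging of what the paper does by directly counting $|H^*_1|$, $|H^*_0|$, $\chi_0(H^*)$ after removing one sink; and your hyperplane argument for excluding $\theta'(u)\in\{0,-3\}$ in~(ii) makes explicit what the paper abbreviates as ``arguing similarly as in~(i)''. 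One small point you leave implicit (and which the paper states) is that once every sink of $H^*$ has weight~$1$, the inequalities $0\le x(a)\le 1$ are automatic, so $\polytope(H^*,\theta')_\unull=\polytope(H^*,\theta')$; this is what lets the elimination land in the full quiver polytope rather than just its $\unull$-cell.
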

\begin{proof}
By (i) of Lemma \ref{lemma:finitecells} it is sufficient to deal with the case 
$\uk=\unull$. 
Let $w$ be a valency $2$ sink of $G^*$ and $v_1,v_2$ the sources neighbouring $w$. 
If $\theta(w) < 0$ or $\theta(w) >2$ then clearly $\polytope(G^*,\theta)_{\unull}$ is empty. 
Let us assume now that $\theta(w) = 2$. It follows  that every $x \in \polytope(G^*,\theta)_{\unull}$ takes value $1$ on both arrows incident to $w$. Let $H$ be the graph we obtain from $G$ by deleting the edge on which $w$ was placed and $\theta'\in \mz^{H^*_1}$ the weight  for which $\theta'(v_1) = \theta(v_1)+1$,  $\theta'(v_2) = \theta(v_2)+1$ and $\theta'  = \theta$ on the rest of the vertices of $H^*$ (identified with the corresponding vertices of $G^*$). By considering the natural projection from $\mr^{G^*_1}$ to $\mr^{H^*_1}$ we see that $\polytope(G^*,\theta)_{\unull}$ is integral-affinely equivalent to $\polytope(H^*,\theta')_{\unull}$.  If $\theta(w) = 0$ then every  $x \in \polytope(G^*,\theta)_{\unull}$ takes value $0$ on both arrows incident to $w$ and it follows by a similar argument that $\polytope(G^*,\theta)_{\unull}$ is integral-affinely equivalent to $\polytope(H^*,\theta')_{\unull}$ where $\theta' = \theta$ on every vertex of $H^*$.
Applying the above steps repeatedly we obtain that $\polytope(G^*,\theta)_{\unull}$ is integral-affinely equivalent to $\polytope(H^*,\theta')_{\unull}$ where $H$ is a spanning subgraph of $G$ and $\theta'$ takes value $1$ on every sink. On the other hand in this case we have $\polytope(H^*,\theta')_{\unull} = \polytope(H^*,\theta')$, 
because every arrow of $H^*$ is incident to a sink, so the condition $\theta'(v)=1$ for each sink $v$ implies $0\le x(a)\le 1$ for all $x\in \polytope(H^*,\theta')$. 
The first statement of (i) is proven. We turn to the proof of the second statement of (i). Note that if $G$ is prime and $H^*$ is a quiver obtained by removing a sink and the arrows adjacent to it from $G^*$ then $|H^*_1|=|G^*_1|-2$, $|H^*_0|=|G^*_0|-1$, 
$\chi_0(H^*)=\chi_0(G^*)$, hence $\chi(H^*)<\chi(G^*)$. On the other hand if 
$\polytope(G^*,\theta)_\uk$ is integral-affinely equivalent to $\polytope(H^*,\theta')$, then 
$\chi(G^*)=\dim \polytope(G^*,\theta)_\uk=\dim \polytope(H^*,\theta')\le \chi(H^*)$, a contradiction.   

For (ii) first note that $\chi(G^*) = d$ so it follows from (i) that every d-dimensional quiver cell $\polytope(G^*, \theta)_\uk$ is integral-affinely equivalent to  $\polytope(G^*, \theta')$, where $\theta'$ is a weight that  takes value $1$ on every sink of $G^*$. Let $v$ be a source of $G^*$.  If $\theta'(v) > 0$ or $\theta'(v) < -3$ then $\polytope(G^*, \theta')$ is empty, since $v$ has valency $3$. Moreover if $\theta'(v) = 0$ (resp. $\theta'(v) = -3$) then every  $x \in \polytope(G^*,\theta')$ takes value $0$ (resp. value $1$) on the arrows incident to $v$ and arguing similarily as in (i) it follows that $\dim(\polytope(G^*, \theta')) < \chi(G^*) = d$. Finally since $\polytope(G^*,\theta')\neq\emptyset$ implies that $\sum_{v \in G^*_0} \theta'(v) = 0$ we conclude that $\theta'$ takes value $-1$ on exactly $d-1$ of the $2d-2$ sources in $G^*$ and value $-2$ on the rest of the sources.
\end{proof}

Furthermore the following Lemma shows us that for our purpose it is enough to deal with the cases when $G$ is a simple graph (i.e. it contains no multiple edges).

\begin{lemma}\label{lem:multiplearrow}
Let $G$ be a graph, containing two edges - $e_1$ and $e_2$ - running between the same vertices and denote by $v_1, v_2$ the valency $2$ sinks of $G^*$ that are placed on $e_1$ and $e_2$ respectively. Let $H$ be the graph we obtain from $G$ by collapsing $e_1$ and $e_2$ into a single edge $e$ and denote by $v$ the valency $2$ sink of $H^*$ that is placed on $e$. Let  $\theta$ be a weight on $G^*$, with $\theta(v_1) = \theta(v_2) =  1$ such that $\polytope(G^*,\theta)$ is non-empty and the toric ideal  $\ideal(G^*,\theta)$ is not generated by its quadratic elements. Then there exists a weight $\theta'$ on $H^*$ such that the toric ideal $\ideal(H^*,\theta')$ is not generated by its quadratic elements.
\end{lemma}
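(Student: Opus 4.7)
My approach is to use the natural semigroup projection induced by collapsing $e_1, e_2$ into $e$, combined with Proposition~\ref{prop:equivclasses} to transfer the obstruction to quadratic generation. Let $u_1, u_2 \in G_0$ be the common endpoints of $e_1, e_2$, and identify $H^*_0 \setminus \{v\}$ with $G^*_0 \setminus \{v_1, v_2\}$ in the obvious way. Define $\theta'$ on $H^*$ by $\theta'(v) = 2$ and $\theta'(w) = \theta(w)$ for all other $w$, and the linear map $\pi \colon \mr^{G^*_1} \to \mr^{H^*_1}$ by
\[
\pi(x)(u_i \to v) = x(u_i \to v_1) + x(u_i \to v_2), \qquad i = 1, 2,
\]
and $\pi(x)(a) = x(a)$ on the remaining arrows. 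Flow conservation at $u_1, u_2$ together with the sink constraints $\theta(v_1) = \theta(v_2) = 1$ (whose sum matches $\theta'(v) = 2$) guarantees that $\pi$ sends $k\polytope(G^*, \theta)$ into $k\polytope(H^*, \theta')$. Moreover, for any $y \in \polytope(H^*, \theta') \cap \mz$, a direct case analysis on $y(u_1 \to v) \in \{0, 1, 2\}$ produces a lift $x \in \polytope(G^*, \theta) \cap \mz$, so $\pi$ induces a surjective semigroup homomorphism $\semigr(G^*, \theta) \twoheadrightarrow \semigr(H^*, \theta')$. The kernel of $\pi$ on the cycle lattice $M^{G^*}$ is spanned by the alternating 4-cycle $c$ with $c(u_1 \to v_1) = c(u_2 \to v_2) = 1$ and $c(u_2 \to v_1) = c(u_1 \to v_2) = -1$, and each $\pi$-fiber consists of one or two elements, the latter differing by $c$.

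Assume, towards a contradiction, that $\ideal(H^*, \theta')$ is generated in degree two, with the aim of deducing that so is $\ideal(G^*, \theta)$, contradicting the hypothesis. By Proposition~\ref{prop:equivclasses} it suffices to verify, for every $r \geq 3$ and every $s \in \semigr(G^*, \theta)_r$, that the relation $\sim_s$ has a single equivalence class. Fix such an $s$, let $s' = \pi(s) \in \semigr(H^*, \theta')_r$, and take lattice points $m_1, m_2 \leq s$ of $\polytope(G^*, \theta)$. Since $\sim_{s'}$ has a single equivalence class by assumption, there is a chain $\pi(m_1) = w_0, w_1, \ldots, w_\ell = \pi(m_2)$ in $\polytope(H^*, \theta') \cap \mz$ with $w_i + w_{i+1} \leq s'$. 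It then remains to construct a lifted chain $m_1 = \tilde w_0, \tilde w_1, \ldots, \tilde w_\ell = m_2$ in $\polytope(G^*, \theta) \cap \mz$ with $\pi(\tilde w_i) = w_i$ and $\tilde w_i + \tilde w_{i+1} \leq s$ for all $i$, possibly after inserting additional intermediate lifts in the same fibers; this yields $m_1 \sim_s m_2$ and completes the contradiction.

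The lifting step is the main obstacle. Having chosen $\tilde w_i$, one picks an initial lift $\tilde w_{i+1}^0$ of $w_{i+1}$; since $\pi$ is the identity outside $\supp(c)$, the constraint $\tilde w_i + \tilde w_{i+1}^0 \leq s$ can only fail on the four arrows in $\supp(c)$. If $w_{i+1}$ lies in a two-element fiber, replacing $\tilde w_{i+1}^0$ by $\tilde w_{i+1}^0 \pm c$ (when the result is non-negative) typically rectifies the failure; when a single-element fiber forces a bad configuration, one inserts an intermediate $w' \in \polytope(H^*, \theta') \cap \mz$ lying in a two-element fiber and satisfying both $w_i + w' \leq s'$ and $w' + w_{i+1} \leq s'$, which is produced from the chain condition on $w_i + w_{i+1} \leq s'$. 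The detailed case analysis, based on the distribution of the entries $s(a)$ for $a \in \supp(c)$ and the sink constraints $s(u_1 \to v_j) + s(u_2 \to v_j) = r$ for $j = 1, 2$, must ensure that at least one valid fiber choice (together with a suitable insertion when needed) is always available.
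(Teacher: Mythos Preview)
Your overall strategy---use the projection $\pi$ and Proposition~\ref{prop:equivclasses}, then lift a $\sim_{s'}$-chain in $H^*$ to a $\sim_s$-chain in $G^*$---is essentially the paper's strategy as well. However, the proposal is not a proof: the entire content lies in the ``detailed case analysis'' of the lifting step, and you have not carried it out. You only assert that replacing a lift by $\tilde w_{i+1}^0 \pm c$ ``typically'' works, and that a suitable intermediate $w'$ can be ``produced from the chain condition'', without verifying either claim. These are precisely the non-trivial points; the paper proves them as Claim~I (the two elements of a $\pi$-fiber are $\sim_s$-equivalent) and Claim~II (any pair $w_i+w_{i+1}\le s'$ admits lifts with sum $\le s$), and each requires a genuine argument using the specific values of $s$ on the four arrows in $\supp(c)$.

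Two concrete points where the paper's argument differs from your outline and which you should incorporate. First, the paper invokes Theorem~\ref{thm:deg3} at the outset to reduce to a single $s\in\semigr(G^*,\theta)_3$; this is what makes the case analysis finite and tractable. You instead attempt to handle all $r\ge 3$ at once, and for large $r$ the distribution of $s$-values on $\supp(c)$ has many more patterns, so your unfinished case analysis is genuinely harder than necessary. Second, the paper does \emph{not} use the single weight $\theta'(v)=2$ throughout: when one of the four values $s(a_1),s(a_2),s(b_1),s(b_2)$ vanishes (forcing the opposite arrow to carry value $3$), the paper passes instead to a weight with $\theta'(v)=1$ and an adjusted value at $b^-$, under which the relevant map becomes an order-preserving bijection and the lifting is automatic. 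Only in the remaining situation (after symmetry, the two patterns $(2,2,1,1)$ and $(2,1,1,2)$) does the paper use $\theta'(v)=2$ and prove Claims~I and~II. You could probably push through your single-$\theta'$ approach even in the degenerate case, but you would still need the degree-$3$ reduction and an explicit verification of the two claims.
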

\begin{proof}
Denote the arrows of $G^*$ and $H^*$ incident to $v_1,v_2$ and $v$ as in the picture below.
\[
\begin{tikzpicture}[>=open triangle 45,scale=0.8] 

\foreach \x in {(0,0),(-2,2),(2,2),(0,4)} \filldraw \x circle (2pt);
\draw  [->]  (0,0)--(-2,2);
\draw  [->]  (0,0)--(2,2);
\draw  [->]  (0,4)--(-2,2);
\draw  [->]  (0,4)--(2,2);

\node[left] at (-2,2) {$v_1$};
\node[right] at (2,2) {$v_2$};
\node[left] at (-1,1) {$a_1$};
\node[right] at (1,1) {$a_2$};
\node[left] at (-1,3) {$b_1$};
\node[right] at (1,3) {$b_2$};
\node[left] at (-3,4) {$G^*$};

\foreach \x in {(8,0),(8,2),(8,4)} \filldraw \x circle (2pt);

\node[left] at (7,4) {$H^*$};
\draw  [->]  (8,0)--(8,2);
\draw  [->]  (8,4)--(8,2);

\node[left] at (8,2) {$v$};
\node[right] at (8,1) {$a$};
\node[right] at (8,3) {$b$};

\end{tikzpicture}
\]

Recall that by Theorem \ref{thm:deg3} we know that $\ideal(G^*,\theta)$ is generated by its elements of degree at most $3$. Since we assumed that  $\ideal(G^*,\theta)$ is not generated in degree two, it follows from Proposition \ref{prop:equivclasses}  that there is a lattice point $s \in  \semigr(G^*,\theta)_3$ such that $\sim_s$ has more than one equivalence classes. Throughout the proof we will identify the  arrows in $G^*_1 \setminus \{a_1,a_2,b_1,b_2\}$ with the arrows of $H^*_1 \setminus \{a,b\}$, and the vertices in $G^*_0 \setminus \{v_1, v_2\}$ with the vertices in $H^*_0 \setminus \{v\}$. 

We will first deal with the case when $0 \in \{s(a_1), s(a_2), s(b_1), s(b_2)\}$. By symmetry we may assume that $s(a_1) = 0$, which implies that $s(b_1) = 3$. Set $\theta'$ to be the weight of $H^*$ for which $\theta'(v) = 1$, $\theta(b^-) = \theta(b_1^-) + 1$ and $\theta' = \theta$ on the rest of the vertices. 
Define the linear map $\phi:\mr^{G^*_1} \rightarrow \mr^{H^*_1}$, as $\phi(x)(a) = x(a_2)$, $\phi(x)(b) = x(b_2)$ and $\phi(x)(c) = x(c)$ for $c\in G^*_1 \setminus \{a_1,a_2,b_1,b_2\}$. Since for each $m \in \{m \in \semigr(G^*,\theta)_1 \mid\: m \leq s\}$ we have that $m(a_1) = 0$ and $m(b_1) = 1$ and $\phi$ is an order-preserving isomorphism when restricted to the set $\{x \in \mr^{G^*_1} \mid x(a_1) = 0, x(b_1) = 1\}$, it follows that $\phi$ maps the set $\{m \in \semigr(G^*,\theta)_1 \mid\: m \leq s\}$ bijectively onto the set $\{m \in \semigr(H^*,\theta')_1 \mid\: m \leq \phi(s)\}$, moreover that $m_1 + m_2 \leq s$ if and only if $\phi(m_1)+\phi(m_2) \leq \phi(s)$, whence $m_1 \sim_s m_2$ if and only if $\phi(m_1) \sim_{\phi(s)} \phi(m_2)$. Consequently if $\sim_s$ has more than one equivalence classes then so does $\sim_{\phi(s)}$. Since $\phi(s) \in \semigr(H^*,\theta')_3$ we conclude, by  Proposition \ref{prop:equivclasses}, that $\ideal(H^*,\theta')$ is not generated in degree two. 
 
For the remainder of the proof we assume  $0 \not\in \{s(a_1), s(a_2), s(b_1), s(b_2)\}$. After a possible relabeling of the arrows it is sufficient to deal with the following two cases: \newline
Case I: $s(a_1) = 2$, $s(a_2) = 2$, $s(b_1) = 1$, $s(b_2) = 1$ \newline
Case II: $s(a_1) = 2$, $s(a_2) = 1$, $s(b_1) = 1$, $s(b_2) = 2$ \newline
Set $\theta'$ to be the weight of $H^*$ for which $\theta'(v) = 2$ and $\theta' = \theta$ on the rest of the vertices.  Define the linear map $\phi:\mr^{G^*_1} \rightarrow \mr^{H^*_1}$, as $\phi(x)(a) = x(a_1)+x(a_2)$, $\phi(x)(b) = x(b_1)+x(b_2)$. Note that $\phi$ maps the set $\semigr(G^*,\theta)_i$ onto $\semigr(H^*,\theta')_i$ for all $i \in \mathbb{N}$. 
For $m \in \semigr(H^*,\theta')_1$ let us denote by $m[i,j]$ the element of $\phi^{-1}(m)$ with $m[i,j](a_1) = i$ and $m[i,j](a_2) = j$ (if it exists). For $m \in \semigr(H^*,\theta')_1$ if $m(a) = 0$ then $\phi^{-1}(m) \cap \semigr(G^*,\theta)_1= \{m[0,0]\}$, if $m(a) = 2$ then $\phi^{-1}(m) \cap \semigr(G^*,\theta)_1 = \{m[1,1]\}$, moreover if $m(a) = 1$ then $\phi^{-1}(m) \cap \semigr(G^*,\theta)_1= \{m[1,0], m[0,1]\}$. We claim that in both Case I and Case II we have $$\phi^{-1}(\{m \in \semigr(H^*,\theta')_1 \mid\: m \leq \phi(s)\}) \cap  \semigr(G^*,\theta)_1 = \{m \in \semigr(G^*,\theta)_1 \mid\: m \leq s\}.$$ Indeed, the right hand side is contained in the left hand side by $m \in \phi^{-1}(\phi(m))$, and the left hand side is contained in the right hand side since any element of $\semigr(G^*,\theta)_1$ takes value at most one on the arrows $\{a_1, a_2, b_1, b_2$ and  $0 \not\in \{s(a_1), s(a_2), s(b_1), s(b_2)\}$.
{\bf Claim I.} Let $m \in  \semigr(H^*,\theta')_1$ such that $m(a) = 1$ and $m \leq \phi(s)$. Then $m[1,0] \sim_s m[0,1]$.

In Case I, by $m[1,0] \leq s$ and normality of $\polytope(G^*,\theta)$ we have that there exist $n_1, n_2 \in \semigr(G^*,\theta)_1$ such that $$s = m[1,0] + n_1[0,1] + n_2[1,1].$$ Now we have that $m[0,1] \leq m[1,0] + n_1[0,1]$ hence $m[0,1] \sim_s n_2[1,1] \sim_s m[1,0]$. \newline
Similarly in Case II, by $m[0,1] \leq s$ we have that there exist  $n_1, n_2 \in \semigr(G^*,\theta)_1$ such that $$s = m[0,1] + n_1[1,0] + n_2[1,0].$$ Since $m[1,0] \leq m[0,1]+n_1[1,0]$ it follows that $m[1,0] \sim_s n_2[1,0] \sim_s m[0,1]$, proving Claim I.\newline
{\bf Claim II.} Let $m_1, m_2 \in  \semigr(H^*,\theta)_1$ be such that $m_1 + m_2 \leq \phi(s)$. Then there exists $m_1' \in \phi^{-1}(m_1) \cap  \semigr(G^*,\theta)_1$ and $m_2' \in \phi^{-1}(m_2) \cap  \semigr(G^*,\theta)_1$ such that $m_1' + m_2' \leq s$.

In Case I if $m_1(a) = m_2(a) = 1$ then for $m_1' = m_1[1,0]$ and $m_2' = m_2[0,1]$ we have $m_1' + m_2' \leq s$. If $m_1(a)$ and $m_2(a)$ do not both equal $1$ the claim holds for any choice of $m_1' \in \phi^{-1}(m_1) \cap  \semigr(G^*,\theta)_1$ and $m_2' \in \phi^{-1}(m_2) \cap  \semigr(G^*,\theta)_1$.\newline
In Case II if $m_i(a) = 1$ for $i = 1,2$ then set $m_i'(a) = m_i[1,0]$ otherwise choose $m_i'(a)$ to be the unique element in $\phi^{-1}(m_1) \cap  \semigr(G^*,\theta)_1$ to obtain $m_1' + m_2' \leq s$, completing the proof of Claim II.

We are ready to show that if $\sim_s$ has at least two equivalence classes then so does $\sim_{\phi(s)}$. Indeed assume that  $\sim_{\phi(s)}$ has only one equivalence class and choose $m_1, m_2 \in  \{m \in \semigr(G^*,\theta)_1 \mid\: m \leq s\}$. By the definition of $\sim_{\phi(s)}$ there is a chain of elements $n_{1,\dots,k} \in \{m \in \semigr(H^*,\theta')_1 \mid\: m \leq \phi(s)\})$ such that $n_1 = \phi(m_1)$, $n_k = \phi(m_k)$ and $n_i + n_{i+1} \leq \phi(s)$ for $i = 1, \dots, k-1$. Now set $n_1' = m_1$, $n_k' = m_2$ and choose $n_i' \in \phi^{-1}(n_i)$ for $i = 2,\dots,k-1$. By Claim I and Claim II we have that $n_i' \sim_s n_{i+1}'$ for $i = 1,\dots, k-1$, hence by transitivity $m_1 \sim_s m_2$ for an arbitrary choice of $m_1$ and $m_2$ contradicting the assumption that $\sim_s$ has at least two equivalence classes.
\end{proof}

\begin{proposition}\label{prop:multiplearrow_cor}
If for some $d\ge 2$ every at most $(d-1)$-dimensional quiver polytope has a toric ideal generated in degree two, then every $d$ dimensional quiver polytope whose toric ideal is not generated in degree two can be realized as $\polytope(G^*, \theta)$ where $G$ is a simple, prime $3$-regular graph on $2d-2$ vertices. 
\end{proposition}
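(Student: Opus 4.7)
The plan is to combine the decomposition theorems with a dimension count. Let $\polytope(\quiver,\theta)$ be a $d$-dimensional quiver polytope with $\ideal(\quiver,\theta)$ not generated in degree two. First I would reduce to the prime case via Proposition \ref{prop:quiverproducts}: $\polytope(\quiver,\theta)$ is integral-affinely equivalent to a product $\prod_{i=1}^k \polytope(\quiver^i,\theta_i)$ corresponding to the prime decomposition of $\quiver$, and if this product were nontrivial, each factor would be a quiver polytope of dimension strictly less than $d$ with a degree-two generated toric ideal (by the induction hypothesis), whence iterated application of Lemma \ref{lem:productpoly} would force $\ideal(\quiver,\theta)$ to be degree-two generated, a contradiction. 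Hence $\quiver$ is prime, and Theorem \ref{thm:allquivers} yields an integral-affine equivalence $\polytope(\quiver,\theta)\cong\polytope(G^*,\theta^*)$ for some prime $3$-regular loopless graph $G$ on $2d-2$ vertices.

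The remaining task is to show that $G$ has no parallel edges. Suppose for contradiction that $G$ contains edges $e_1,e_2$ between the same pair of vertices. By Lemma \ref{lemma:centering}, $\ideal(G^*,\theta^*)$ is generated by quadratic binomials together with the toric ideals of the cells $\polytope(G^*,\theta^*)_\uk$, so some cell has a toric ideal not generated in degree two. This cell is itself a quiver polytope of dimension at most $d$, so by the induction hypothesis it must be exactly $d$-dimensional. Applying Theorem \ref{thm:listcells}(ii) to this $d$-dimensional cell produces an integral-affine equivalence with $\polytope(G^*,\theta')$, where the weight $\theta'$ takes value $1$ on every valency-$2$ sink of $G^*$; in particular $\theta'(v_1)=\theta'(v_2)=1$ for the sinks $v_1,v_2$ sitting on $e_1,e_2$.

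Now Lemma \ref{lem:multiplearrow} applies to $\polytope(G^*,\theta')$: collapsing $e_1,e_2$ into a single edge produces a graph $H$ and a weight $\theta''$ on $H^*$ such that $\ideal(H^*,\theta'')$ is not generated in degree two. A direct count gives $|H^*_0|=|G^*_0|-1$ and $|H^*_1|=|G^*_1|-2$; since $G$ is biconnected and removing $e_1$ leaves its parallel twin $e_2$ in place, $H$ is connected, and therefore $\chi(H^*)=\chi(G^*)-1=d-1$. Hence $\dim \polytope(H^*,\theta'')\le d-1$, so the induction hypothesis forces $\ideal(H^*,\theta'')$ to be degree-two generated, contradicting the conclusion of Lemma \ref{lem:multiplearrow}. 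Therefore $G$ has no parallel edges, i.e. it is simple, completing the proof.

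The main subtlety is that Lemma \ref{lem:multiplearrow} requires the sink weights on the two parallel edges to equal $1$, a condition that need not be satisfied by the realization supplied by Theorem \ref{thm:allquivers}. Passing to a $d$-dimensional cell via Lemma \ref{lemma:centering} and then invoking the normalized form for cells given by Theorem \ref{thm:listcells}(ii) is precisely the maneuver that secures this normalization and allows the contradiction to go through.
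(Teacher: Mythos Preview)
Your proof is correct and follows essentially the same route as the paper's: reduce to the prime case via Proposition~\ref{prop:quiverproducts} and Lemma~\ref{lem:productpoly}, realize the polytope via Theorem~\ref{thm:allquivers}, then if $G$ had parallel edges pass to a full-dimensional cell (Lemma~\ref{lemma:centering} plus the hypothesis), normalize the sink weights by Theorem~\ref{thm:listcells}, and apply Lemma~\ref{lem:multiplearrow} to drop $\chi$ by one, contradicting the hypothesis. One small wording issue: in the prime reduction you write ``Hence $\quiver$ is prime,'' but a nontrivial prime decomposition could have $0$-dimensional factors leaving one factor still of dimension $d$; the clean conclusion is that the polytope (up to integral-affine equivalence) is $\polytope(\quiver^i,\theta_i)$ for a single prime component $\quiver^i$, which is exactly what Theorem~\ref{thm:allquivers} needs.
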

\begin{proof}
By Lemma \ref{lem:productpoly} if all the lower dimensional quiver polytopes have toric ideals generated in degree two it is sufficient to consider prime quiver polytopes, which by  Theorem \ref{thm:allquivers} can be realized as $\polytope(G^*, \theta)$ where $G^*$ is a prime $3$-regular graph on $2d-2$ vertices. Assume that $G^*$ has multiple arrows running between some vertices. By  Lemma~\ref{lemma:centering} for some $\uk$ the toric ideal $\ideal(\polytope(\quiver,\theta)_{\uk})$ is not generated in degree two. By the assumption that every at most $(d-1)$-dimensinal  quiver polytope has a toric ideal generated in degree two we have that $\dim (\polytope(G^*, \theta)_\uk) = d = \chi(G^*)$. Hence by Theorem~\ref{thm:listcells} there is a weight $\theta'$ on $G^*$ which takes value $1$ on every sink such that $\polytope(\quiver,\theta)_{\uk}$ is integral-affinely equivalent to $\polytope(G^*, \theta')$. Now we can apply Lemma \ref{lem:multiplearrow} to $\polytope(G^*, \theta')$ to obtain a graph $H$ after collapsing some multiple arrows and a weight $\theta''$ such that $\ideal(H^*,\theta'')$ is not generated in degree two. On the other hand we have $d = \chi(G^*) > \chi(H^*) \geq \dim(\polytope(H^*,\theta''))$ contradicting the assumption in the Corollary.
\end{proof}

In the case $d = 3$, there is only one 3-regular simple graph on $4$ vertices, the complete graph $K_4$. For $d = 4$ there are two 3-regular simple graphs on $6$ vertices 
(see \cite{bussemaker_etal}), 
the complete bipartite graph $K_{3,3}$ and the prism graph $Y_3$, shown in the picture below:

$$\begin{tikzpicture}[scale=0.5]  
\foreach \x in {(0,0),(0,4),(3,2),(5,2),(8,0),(8,4)} \filldraw \x circle (2pt); 
\draw (0,0)--(0,4)--(3,2)--(0,0); \draw (5,2)--(8,0)--(8,4)--(5,2);\draw (0,0)--(8,0);\draw (3,2)--(5,2); \draw (0,4)--(8,4); 
\end{tikzpicture}$$ 

Next we apply Theorem~\ref{thm:listcells} to list the maximal dimensional cells in each of these cases.

\begin{proposition}\label{prop:listofcells}
\begin{itemize}
\item[(i)] Every $3$-dimensional quiver cell of the form $\polytope(K_4^*,\theta)_\unull$ is integral-affinely equivalent to a $3$-simplex. 
\item[(ii)] There are two different $4$ dimensional quiver cells  $\polytope(Y_3^*,\theta)_\unull$ up to integral-affine equivalence. The toric ideals of both of these are  generated by their elements of degree at most $2$. 
\item[(iii)] There are two different $4$ dimensional quiver cells  $\polytope(K_{3,3}^*,\theta)_\unull$ up to integral-affine equivalence. One of these is a $4$-simplex and the other is the Birkhoff polytope $B_3$. Moreover $\polytope(K_{3,3}^*,\theta)_\unull$ is integral-affinely equivalent to $B_3$ precisely when $\theta$ is $-1$ on the vertices that belong to one of the classes in the bipartition of $K_{3,3}$, $-2$ on the vertices that belong to the other and $1$ on the valency $2$ sinks. 
\end{itemize}
\end{proposition}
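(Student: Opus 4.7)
The plan is to apply Theorem~\ref{thm:listcells}(ii) in each case: every top-dimensional cell is integral-affinely equivalent to some $\polytope(G^*,\theta')$ where $\theta'$ takes value $+1$ on each sink and distributes $d-1$ copies each of $-1$ and $-2$ over the $2d-2$ sources. Up to the action of $\mathrm{Aut}(G)$ on such weights, the enumeration reduces to a short orbit list. Via the sink equations $x(a)+x(b)=1$ on the two arrows into each valency-two sink, the lattice points of each cell are in bijection with orientations of the edges of $G$ in which each source $v$ has out-degree $-\theta'(v)$; since quiver cells are compressed polytopes, these lattice points are precisely the vertices.

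For (i), the group $\mathrm{Aut}(K_4)=S_4$ acts transitively on unordered pairs of vertices, yielding a single orbit with representative ``label two vertices $-1$ and the other two $-2$''. A direct enumeration yields exactly four orientations of $K_4$ with the required out-degree sequence $(1,1,2,2)$; four vertices in a $3$-dimensional compressed polytope force the cell to be the $3$-simplex. For (iii), the group $S_3 \wr \mz_2 = \mathrm{Aut}(K_{3,3})$ produces two orbits: a ``pure'' orbit in which each bipartition class is labelled uniformly, and a ``mixed'' orbit. In the pure case, writing $y_{ij}$ for the flow from the $-1$ source $u_i$ into the sink on edge $u_iv_j$, the sink equations $y_{ij}+x(v_j\to s_{ij})=1$ reduce the source conditions to the row and column sum equalities of a $3\times 3$ doubly-stochastic matrix, exhibiting the cell as $B_3$ (this is the ``Moreover'' statement of (iii)). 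In the mixed case, an analogous count of $0/1$ matrices with row sums $(1,1,2)$ and column sums $(2,1,1)$ yields exactly five compatible orientations, so the $4$-dimensional compressed cell has five vertices and is a $4$-simplex.

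For (ii), the automorphism group of $Y_3$ has order $12$ and produces three orbits, distinguished by how the three $-1$ labels split between the two triangles of $Y_3$: the $3$--$0$ distribution (all on one triangle), or a $2$--$1$ distribution with the lone $-1$ on the other triangle lying either opposite a $-1$ (``aligned'') or opposite a $-2$ (``non-aligned''). In the $3$--$0$ case, summing the three source equations of the uniformly labelled triangle forces the flows on all three cross-edges to vanish, so that cell factors as a product of two line segments and is only $2$-dimensional; this orbit therefore does not yield a $4$-dimensional cell. Enumeration of compatible orientations yields five in the aligned case (making that cell a $4$-simplex, whose toric ideal is trivially quadratic) and six in the non-aligned case (a different cell by vertex count). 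Quadratic generation of the toric ideal of the non-aligned cell is then verified via Proposition~\ref{prop:equivclasses} by showing that for every $s\in\semigr(\polytope)_3$ the relation $\sim_s$ has a single equivalence class. The main obstacle is this last verification: the polytope has only six lattice points, but the check must be repeated for each $s\in\semigr(\polytope)_3$ and amounts to a finite but genuinely case-by-case combinatorial argument, with no evident structural shortcut.
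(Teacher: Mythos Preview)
Your proposal is correct and follows the same overall strategy as the paper: invoke Theorem~\ref{thm:listcells}(ii), enumerate the $\mathrm{Aut}(G)$-orbits of the relevant source labelings, and identify each cell by counting its lattice points (equivalently, the orientations of $G$ with the prescribed out-degree sequence). Your orbit lists for $K_4$, $Y_3$, and $K_{3,3}$ agree with the paper's cases, and your identification of the $3$--$0$ orbit for $Y_3$ as only $2$-dimensional matches what the paper calls Case~II (a square). A small slip: for the mixed $K_{3,3}$ orbit the correct column sums are $(1,1,2)$ rather than $(2,1,1)$, but the count of five $0/1$ matrices is unaffected.

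The one substantive difference concerns the six-vertex $Y_3$ cell (your ``non-aligned'' case, the paper's Case~I). You propose to establish quadratic generation by running Proposition~\ref{prop:equivclasses} over all $s\in\semigr(\polytope)_3$, and you correctly flag this as a genuine case-by-case slog. The paper bypasses this entirely: after six applications of the contraction move in Lemma~\ref{lemma:reduction} it obtains explicit $\mz^5$-coordinates for the six lattice points, observes that the resulting $5\times 6$ matrix has rank $5$, and concludes that the unique (up to scalar) integral linear dependence $\tilde b_2+\tilde b_5=\tilde b_3+\tilde b_4$ already generates every semigroup relation. Thus the toric ideal is \emph{principal}, generated by the single quadratic $t_{b_2}t_{b_5}-t_{b_3}t_{b_4}$, and the $\sim_s$ checks become unnecessary. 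This linear-algebra shortcut is both quicker and more informative than the approach you outline; you may want to adopt it.
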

\begin{proof}
 By Theorem~\ref{thm:listcells} to list all the maximal dimensional cells $\polytope(G^*,\theta)_\unull$ for a $3$-regular graph $G$ we just have to list all the ways we can pick the position of the weight $-1$ and $-2$ vertices in $G$ up to automorphism. We will provide a figure for each case,  indicating the weights on the sources of $G^*$ and calculate the resulting quiver polytope when $\theta$ takes value $1$ on the valency $2$ sinks (which are not shown on the pictures). In the case of $K_4$ there is only one choice:   

$$\begin{tikzpicture}[scale=0.5]  
\foreach \x in {(0,0),(0,4),(4,0),(4,4)} \filldraw \x circle (2pt); 
\draw (0,0)--(0,4)--(4,4)--(4,0)--(0,0); \draw (0,0)--(4,4); \draw (4,0)--(0,4); 
\node [left] at (0,0) {$-2$};
\node [left] at (0,4) {$-1$};
\node [right] at (4,4) {$-1$};
\node [right] at (4,0) {$-2$};
\end{tikzpicture}$$ 

It is easy to check that the resulting quiver cell is a $3$-simplex.
In the case of $Y_4$ there are $3$ ways to place the weights up to automorphism:

$$\begin{tikzpicture}[scale=0.5]  
\foreach \x in {(0,0),(0,4),(3,2),(5,2),(8,0),(8,4)} \filldraw \x circle (2pt); 
\draw (0,0)--(0,4)--(3,2)--(0,0); \draw (5,2)--(8,0)--(8,4)--(5,2);\draw (0,0)--(8,0);\draw (3,2)--(5,2); \draw (0,4)--(8,4); 
\node[left] at (0,0) {$-1$};
\node[left] at (0,4) {$-2$};
\node[left] at (2.8,2) {$-1$};
\node[right] at (5.2,2) {$-2$};
\node[right] at (8,0) {$-2$};
\node[right] at (8,4) {$-1$};
\node[right] at (-4,4) {I};
\end{tikzpicture}$$

$$\begin{tikzpicture}[scale=0.5]  
\foreach \x in {(0,0),(0,4),(3,2),(5,2),(8,0),(8,4)} \filldraw \x circle (2pt); 
\draw (0,0)--(0,4)--(3,2)--(0,0); \draw (5,2)--(8,0)--(8,4)--(5,2);\draw (0,0)--(8,0);\draw (3,2)--(5,2); \draw (0,4)--(8,4); 
\node[left] at (0,0) {$-1$};
\node[left] at (0,4) {$-1$};
\node[left] at (2.8,2) {$-1$};
\node[right] at (5.2,2) {$-2$};
\node[right] at (8,0) {$-2$};
\node[right] at (8,4) {$-2$};
\node[right] at (-4,4) {II};
\end{tikzpicture}$$

$$\begin{tikzpicture}[scale=0.5]  
\foreach \x in {(0,0),(0,4),(3,2),(5,2),(8,0),(8,4)} \filldraw \x circle (2pt); 
\draw (0,0)--(0,4)--(3,2)--(0,0); \draw (5,2)--(8,0)--(8,4)--(5,2);\draw (0,0)--(8,0);\draw (3,2)--(5,2); \draw (0,4)--(8,4); 
\node[left] at (0,0) {$-1$};
\node[left] at (0,4) {$-2$};
\node[left] at (2.8,2) {$-1$};
\node[right] at (5.2,2) {$-2$};
\node[right] at (8,0) {$-1$};
\node[right] at (8,4) {$-2$};
\node[right] at (-4,4) {III};
\end{tikzpicture}$$ 

In case I we obtain a quiver cell with $2$ singular and $4$ smooth vertices, and straightforward calculation (we shall give the details below) shows that the corresponding toric ideal is generated by a single quadratic binomial. Similar calculations yield that 
in case II the resulting quiver cell is integral-affinely equivalent to a $2$-dimensional square.  Finally case III is gives a $4$-simplex.

We explain in detail Case I. Applying six times the reduction step of Lemma~\ref{lemma:reduction} below we obtain the quiver-weight pair $(\quiver,\theta)$ shown in the figure below:  

$$\begin{tikzpicture}[scale=0.25,>=open triangle 45]  
\foreach \x in {(0,0),(0,16),(6,8),(18,8),(24,0),(24,16),(21,4),(12,0),(0,8)} \filldraw \x circle (3pt); 
\draw[->] (24,16)--(24,0); 
\draw[->] (18,8)--(21,4); \draw[->] (24,0)--(21,4); 
\draw[->] (24,16)--(0,16); 
\draw [->] (0,0)--(0,8); \draw [->] (0,16)--(0,8); \draw [->] (0,0)--(12,0); 
\draw [->] (24,0)--(12,0);  \draw [->] (6,8)--(18,8); \draw [->] (6,8)--(0,0); 
 \draw [->] (6,8)--(0,16); \draw [->] (24,16)--(18,8); 
\node[left] at (0,8) {\scriptsize{$1$}}; 
\node[left] at (0,4) {\scriptsize{$x$}}; 
\node[left] at (0,12) {\scriptsize{$1-x$}}; 
\node[left] at (19.5,6) {\scriptsize{$z$}}; \node[right] at (20.8,4.2) {\scriptsize{$1$}}; 
\node[left] at (22.5,2) {\scriptsize{$1-z$}}; 
\node[below] at (12,16) {\scriptsize{$w$}}; 
\node[above] at (12,8) {\scriptsize{$y+w-1$}};
\node[right] at (3,12) {\scriptsize{$1-w-x$}};
\node[left] at (21,12) {\scriptsize{$1-y+z-w$}};
\node[left] at (0,0) {\scriptsize{$0$}};
\node[left] at (0,16) {\scriptsize{$0$}};
\node[left] at (6,8) {\scriptsize{$-1$}};
\node[right] at (18,8) {\scriptsize{$0$}};
\node[right] at (24,0) {\scriptsize{$-1$}};
\node[right] at (24,16) {\scriptsize{$-1$}};
\node[right] at (24,8) {\scriptsize{$y-z$}}; 
\node[below] at (12,0) {\scriptsize{$1$}}; 
\node[below] at (18,0) {\scriptsize{$y$}}; 
\node[below] at (6,0) {\scriptsize{$1-y$}}; 
\node[right] at (3, 3.5) {\scriptsize{$1-w-x$}};
\end{tikzpicture}$$ 
The coordinates $x,y,z,w$ corresponding to the arrows in the complement of a chosen spanning tree of  $\quiver$ can be used as free parameters in the affine solution space 
of the system \eqref{eq:quiverpolytope}. So $\polytope(\quiver,\theta)$ is integral-affinely equivalent to the polytope $\polytope\subset \mr^4$ (with lattice $\mz^4 \subset \mr^4$) 
given by the following inequalities (obtained from the condition that the coordinate corresponding to each arrow is non-negative): 
\[0\le x,y,z,w \le 1; \qquad 
1\le w+y\le 1+z; \qquad 
z\le y \]
The polytope $\polytope$ has six lattice points $b_i$ ($i=1,\dots,6$) whose $x,y,z,w$-coordinates are shown below: 
\[
\begin{array}{c|cccccc}
&  b_1  &  b_2  &  b_3  &  b_4  &  b_5  &  b_6  \\ 
\hline\hline  
 x  &  0  &  0  &  0  &  1  &  1  &  0     \\
 y  &  1  &  1  &  1  &  1  &  1  &  0  \\
 z  &  1  &  0  &  1  &  0  &  1  &  0  \\ 
 w  &  1  &  0  &  0  &  0  &  0  &  1  \\
\hline 
&  1  &  1  &  1  &  1  &  1  &  1  
\end{array} 
\]
The graded semigroup  
$\semigr(\quiver,\theta)$ is generated by $\semigr(\quiver,\theta)_1=\{\tilde b_i\mid i=1,\dots,6\}$ where $\tilde b_i$ is obtained by appending the fifth coordinate $1$ to $b_i$  
The columns of the $5\times 6$ matrix above are $\tilde b_1,\dots,\tilde b_6$, and since this matrix has rank $5$, any integral linear dependency between the $\tilde b_i$ is an integer multiple of 
\[\tilde b_2-\tilde b_3-\tilde b_4+\tilde b_5=0.\] 
It follows that all semigroup relations between the $\tilde b_i$ follow from the relation 
\[\tilde b_2+\tilde b_5=\tilde b_3+\tilde b_4\] 
and consequently $\ideal(\polytope)$ is generated by  the quadratic element 
$t_{b_2}t_{b_5}-t_{b_3}t_{b_4}$.

For $K_{3,3}$ there are two possibilities to place the weights:

$$\begin{tikzpicture}[scale=0.5]  
\foreach \x in {(0,0),(2,0),(4,0),(0,4),(2,4),(4,4)} \filldraw \x circle (2pt); 
\draw (0,0)--(0,4)--(2,0)--(2,4)--(4,0)--(4,4); \draw (0,0)--(2,4); \draw (0,0)--(4,4)--(2,0); \draw (0,4)--(4,0);
\node[above] at (0,4) {$-2$};
\node[above] at (2,4) {$-2$};
\node[above] at (4,4) {$-2$};
\node[below] at (0,0) {$-1$};
\node[below] at (2,0) {$-1$};
\node[below] at (4,0) {$-1$};
\node[right] at (-2,4) {I};
\end{tikzpicture}$$ 

$$\begin{tikzpicture}[scale=0.5]  
\foreach \x in {(0,0),(2,0),(4,0),(0,4),(2,4),(4,4)} \filldraw \x circle (2pt); 
\draw (0,0)--(0,4)--(2,0)--(2,4)--(4,0)--(4,4); \draw (0,0)--(2,4); \draw (0,0)--(4,4)--(2,0); \draw (0,4)--(4,0);
\node[above] at (0,4) {$-2$};
\node[above] at (2,4) {$-2$};
\node[above] at (4,4) {$-1$};
\node[below] at (0,0) {$-1$};
\node[below] at (2,0) {$-1$};
\node[below] at (4,0) {$-2$};
\node[right] at (-2,4) {II};
\end{tikzpicture}$$ 

The quiver in case I yields the Birkhoff polytope and the quiver in case II yields a $4$-simplex.
\end{proof}

\begin{lemma}\label{lemma:reduction} 
Suppose that $\quiver$ has a vertex $v$ which is a valency $2$ sink, $a\neq b$ are the arrows with $a^+=v=b^+$, $a^-$ is a source and $\theta$ is a weight with 
$\theta(a^-)=-1$, $\theta(v)=1$. 
Denote by $\quiver'$ the quiver obtained by removing the vertex $v$ and replacing the 
pair $a,b$ of arrows by a single arrow $c$ with $c^-=a^-$ and $c^+=b^-$. 
Moreover, let $\theta'$ be the weight with $\theta'(c^+)=\theta(b^-)+1$ and $\theta'(w)=\theta(w)$ for all vertices $w$ different from $c^+$: 
\[\begin{tikzpicture}[scale=1,>=open triangle 45] 
\node [right] at (2.5,0.5) {$\mapsto$}; 
\node [below] at (0,0) {\scriptsize{$-1$}}; \node [below] at (2,0) {\scriptsize{$\theta(w)$}}; 
\foreach \x in {(0,0),(1,1),(2,0),(4,0.5),(6,0.5)} \filldraw \x circle (1pt); 
\node [above] at (1,1) {\scriptsize{$1$}};  \node [above, left] at (0.5,0.6) {$a$}; \node[above,right] at (1.5,0.6) {$b$}; 
\draw [->] (0,0)--(1,1); \draw [<-] (1,1)--(2,0);  \draw [->] (4,0.5)--(6,0.5);
\node [below] at (4,0.5) {\scriptsize{$-1$}}; \node [below] at (6,0.5) {\scriptsize{$\theta(w)+1$}};
\end{tikzpicture} \]
Then $\polytope(\quiver,\theta)$ is integral-affinely equivalent with 
$\polytope(\quiver',\theta')$. 
\end{lemma}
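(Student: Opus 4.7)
The plan is to exhibit an explicit affine-linear bijection between $\polytope(\quiver,\theta)$ and $\polytope(\quiver',\theta')$ that is induced by "forgetting" the coordinate $x(b)$. The key observation is that since $v$ is a valency $2$ sink with only incoming arrows $a$ and $b$, the flow equation at $v$ forces $x(a)+x(b)=\theta(v)=1$ for every $x\in\polytope(\quiver,\theta)$. Hence $x(b)$ is an affine-linear function of $x(a)$, and the affine span of $\polytope(\quiver,\theta)$ projects bijectively onto a hyperplane of $\mr^{\quiver_1}$ in which only $x(a)$ survives from the pair $\{a,b\}$. The renaming $a\mapsto c$ identifies this projection space with $\mr^{\quiver'_1}$.

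Concretely, I would define $\Phi:\mr^{\quiver_1}\to\mr^{\quiver'_1}$ by $\Phi(x)(c)=x(a)$ and $\Phi(x)(a')=x(a')$ for every other arrow $a'$, and its partial inverse $\Psi:\mr^{\quiver'_1}\to\mr^{\quiver_1}$ by $\Psi(y)(a)=y(c)$, $\Psi(y)(b)=1-y(c)$, and $\Psi(y)(a')=y(a')$ on shared arrows. Both maps are defined over $\mz$, and $\Phi\circ\Psi=\mathrm{id}_{\mr^{\quiver'_1}}$ while $\Psi\circ\Phi$ is the identity on the hyperplane $\{x(a)+x(b)=1\}$, which contains $\affspan \polytope(\quiver,\theta)$. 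This already yields the integral-affine isomorphism of affine spans required by Definition~\ref{def:isomorphicpolytopes}; what remains is to check that $\Phi$ actually sends $\polytope(\quiver,\theta)$ onto $\polytope(\quiver',\theta')$.

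For the forward direction I would verify the flow equations and non-negativity for $\Phi(x)$. The only vertex where anything interesting happens is $b^-=c^+$: using $x(a)+x(b)=1$, the new balance equation reads
\[\Phi(x)(c)+\sum_{a'^+=b^-,\,a'\ne c}x(a')-\sum_{a'^-=b^-}x(a')=x(a)+\theta(b^-)+x(b)=\theta(b^-)+1=\theta'(b^-),\]
while the balance at $a^-$ is preserved because $a$ and $c$ share the same tail. For the reverse direction, given $y\in\polytope(\quiver',\theta')$, the non-trivial point is $\Psi(y)(b)=1-y(c)\geq 0$; this is precisely where the hypotheses on $a^-$ are used. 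Since $a^-$ is a source and $\theta'(a^-)=\theta(a^-)=-1$, the flow equation at $a^-$ in $\quiver'$ reads $\sum_{a'^-=a^-}y(a')=1$, and the non-negativity of all out-flows at $a^-$ forces $y(c)\leq 1$. The remaining flow equations transfer back symmetrically, and the balance at $v$ is automatic because $\Psi(y)(a)+\Psi(y)(b)=1=\theta(v)$.

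The main obstacle is primarily bookkeeping rather than a genuine difficulty: one must correctly track which vertices and arrows are affected by the reduction (essentially only $v$, $a$, $b$, and $b^-$) and confirm that $a^-$ is genuinely unchanged (so $\theta'(a^-)=\theta(a^-)$ is available to control $y(c)\le 1$). Implicit in the statement is that $a^-,b^-,v$ are distinct, since otherwise $c$ would be a loop; under this mild assumption the verification above closes cleanly, so no hard step is expected.
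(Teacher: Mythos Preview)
Your argument is correct and complete. The paper's proof is shorter but less self-contained: it observes, exactly as you do, that $x(b)=1-x(a)$ on the affine span and that the source condition $\theta(a^-)=-1$ forces $x(a)\le 1$, hence $x(b)\ge 0$ is redundant; it then invokes Lemma~4.4 of \cite{domokos-joo} to conclude that the arrow $b$ is \emph{contractible} and that contracting it yields $(\quiver',\theta')$ with an integral-affinely equivalent polytope. Your explicit maps $\Phi,\Psi$ are precisely the content of that contraction step unpacked by hand, so the two proofs are the same idea at different levels of abstraction: yours is elementary and avoids the external reference, while the paper's gains brevity by appealing to the general contractibility machinery.
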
 

\begin{proof} Denote by   $H\subset \mr^{\quiver_1}$ the affine solution space of the system \eqref{eq:quiverpolytope} of linear equations. 
Since $a^-$ is a source, $\theta(a^-)=-1$ implies that 
$x(a)\le 1$ for all $x\in H':=\{x\in H\mid x(d)\ge 0 \mbox{ if }d^-=a^-\}$. 
Note that $x(b)=\theta(v)-x(a)=1-x(a)$ for any $x\in H$. Therefore for $x\in H'$ 
we have $x(b)\ge 0$. 
It follows by Lemma 4.4 in \cite{domokos-joo} that the arrow $b$ is \emph{contractible} (in the sense of loc. cit.). Contracting $b$ we get the pair $(Q',\theta')$ and  so 
$\polytope(\quiver,\theta)$ is integral-affinely equivalent to $\polytope(\quiver',\theta')$ by definition of contractibility.  
\end{proof} 

We are ready to prove the main result of this section.

\begin{theorem}\label{thm:cells}
Let $\quiver$ be a quiver without oriented cycles.
\begin{itemize}
\item[(i)] If $dim(\polytope(\quiver,\theta)) \leq 3$ then $\ideal(\quiver,\theta)$ is generated in degree two.
\item[(ii)] If $dim(\polytope(\quiver,\theta)) = 4$ then either  $\ideal(\quiver,\theta)$ is generated in degree two, or $\polytope(\quiver,\theta)$ is integral-affinely equivalent to the Birkhoff polytope $B_3$, when $\ideal(\quiver,\theta)$ is generated by a single cubic element  and some quadratic elements.
\end{itemize}
\end{theorem}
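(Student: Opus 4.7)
The plan is to prove (i) and (ii) together by induction on $d = \dim \polytope(\quiver,\theta)$, using a uniform four-step strategy at each inductive level. First, I would apply Proposition \ref{prop:quiverproducts} (via Lemma \ref{lem:productpoly}) to reduce to prime quiver polytopes, then invoke Proposition \ref{prop:multiplearrow_cor}---which feeds on the inductive hypothesis for dimensions below $d$---to replace $\quiver$ by $G^*$ for a simple prime $3$-regular graph $G$ on $2d-2$ vertices. Next, Lemma \ref{lemma:centering} reduces the question of degree-two generation of $\ideal(\quiver,\theta)$ to the analogous question for the cells $\polytope(\quiver,\theta)_\uk$, and Proposition \ref{prop:listofcells} provides a classification of the full-dimensional cells for the graphs that survive step two. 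The base cases of (i) at $d \le 2$ are standard: $d = 1$ is trivial, and for $d = 2$ every normal lattice polygon has its toric ideal generated by quadrics.

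For (i) at $d=3$, the above reductions leave only $G = K_4$, and Proposition \ref{prop:listofcells}(i) tells us every full-dimensional cell is a $3$-simplex with trivial toric ideal; lower-dimensional cells fall under the inductive hypothesis, so Lemma \ref{lemma:centering} concludes. For (ii) at $d=4$, the same machinery leaves $G = Y_3$ or $G = K_{3,3}$. In the first case, Proposition \ref{prop:listofcells}(ii) says every $4$-dimensional cell already has toric ideal generated in degree two, and we finish as before. In the second case, Proposition \ref{prop:listofcells}(iii) lists the $4$-dimensional cells as either $4$-simplices (trivial ideal) or Birkhoff polytopes $B_3$ (with a single cubic generator), so the only potential obstruction to degree-two generation is a cell equivalent to $B_3$.

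The hard part will be the case $G = K_{3,3}$ when some cell $\polytope(K_{3,3}^*,\theta)_\uk$ is equivalent to $B_3$: one has to show this forces $\polytope(K_{3,3}^*,\theta)$ itself to be $\cong B_3$. The plan here is to combine Lemma \ref{lemma:finitecells}(i) with Proposition \ref{prop:listofcells}(iii) to conclude that the shifted weight $\theta' := \theta - \theta_\uk$ must take the value $1$ on every valency-$2$ sink $v$ of $K_{3,3}^*$. Then the observation that each arrow of $K_{3,3}^*$ terminates at such a sink, together with the balance $x(a_1) + x(a_2) = \theta'(v) = 1$, forces $\polytope(K_{3,3}^*,\theta')$ to lie in $[0,1]^{K_{3,3}^*_1}$ and hence to coincide with its $\unull$-cell $B_3$. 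The subtle technical step that I expect to need the most care is justifying that, after chaining together the reductions in the proofs of Proposition \ref{prop:multiplearrow_cor} and Theorem \ref{thm:listcells}---both of which already replace cells by full quiver polytopes with sink-weights equal to $1$---one may in fact assume $\uk = \unull$ from the outset, so that the original polytope $\polytope(K_{3,3}^*,\theta)$ is its own $\unull$-cell and thereby equals $B_3$, leaving the cubic $B_3$ relation as the only non-quadratic generator.
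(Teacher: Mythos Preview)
Your overall inductive architecture---reduce to prime quivers, invoke Proposition~\ref{prop:multiplearrow_cor} to restrict to simple $3$-regular $G$, then combine Lemma~\ref{lemma:centering} with the cell classification in Proposition~\ref{prop:listofcells}---is exactly the paper's. The gap is in your treatment of the $K_{3,3}$ case.

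Your plan there is to show that if some cell $\polytope(K_{3,3}^*,\theta)_\uk$ is integral-affinely equivalent to $B_3$, then $\polytope(K_{3,3}^*,\theta)$ itself must be $B_3$. This implication is \emph{false}. Concretely, pick $\uk$ with $\uk(a_{1,1})=1$ and all other $\uk$-entries zero, and set $\theta=\omega+\theta_\uk$ where $\omega$ is the Birkhoff weight of Proposition~\ref{prop:listofcells}(iii). Then $\polytope(K_{3,3}^*,\theta)_\uk$ is a translate of $B_3$, but $\polytope(K_{3,3}^*,\theta)$ is strictly larger (for instance it contains lattice points with $x(a_{1,1})=2$). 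The chain of reductions you invoke does not let you assume $\uk=\unull$: Proposition~\ref{prop:multiplearrow_cor} only pins down the graph $G$, not the weight, and Theorem~\ref{thm:listcells} normalises the weight of a \emph{cell}, not of the ambient polytope. Your correct observation that $\polytope(K_{3,3}^*,\theta')=\polytope(K_{3,3}^*,\theta')_\unull\cong B_3$ for $\theta'=\theta-\theta_\uk$ recovers information about the cell only, not about $\polytope(K_{3,3}^*,\theta)$.

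What the paper actually does is different in kind. It does \emph{not} argue that the polytope must equal $B_3$; instead it shows that whenever $\uk(a_{i,j})>0$ for some $i,j$, the cubic generator of $\ideal(\polytope(K_{3,3}^*,\theta)_\uk)$ becomes redundant in the larger ideal $\ideal(K_{3,3}^*,\theta)$. The point---flagged in Remark~4.6(iii)---is that the relation $\sim_{s'}$ computed inside the cell is coarser than $\sim_{s'}$ computed in the full polytope: the ambient polytope has extra degree-one divisors of $s'$ that merge the two $B_3$ equivalence classes. Concretely, if say $\uk(a_{1,1})>0$ then $\sigma'_{123}+\sigma'_{132}\le s'$ holds in $\semigr(K_{3,3}^*,\theta)$ (though not in the cell), and Proposition~\ref{prop:equivclasses} then kills the cubic. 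Only in the remaining case $\uk(a_{i,j})=0$ for all $i,j$ does one argue, as you do, that $\polytope(K_{3,3}^*,\theta)$ coincides with its $\uk$-cell and hence is $B_3$. You are missing this first case entirely.
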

\begin{proof} (i)
It is known that up to integral-affine equivalence the only one-dimensional quiver polytopes are $\polytope(\quiver,(-d,d))$ where $\quiver$ is the Kronecker quiver with two vertices and two arrows from the first vertex to the second (see for example the first paragraph of the proof of Proposition 5.1 in \cite{domokos-joo}); the associated toric ideal is zero when $d=1$ and is generated by the quadratic Veronese relations for $d>1$ (in fact the only 
one-dimensional projective toric variety is $\mathbb{P}^1$).  
In dimension two every projective toric quiver variety is smooth 
(see for example  Proposition 5.1 of \cite{domokos-joo}), so in particular every cell is smooth. Hence by Theorem \ref{thm:0-1deg2} below the toric ideal of any cell is generated by its quadratic elements and then by Lemma \ref{lemma:centering} we see that the toric ideal of any two-dimensional projective toric quiver variety is generated by its quadratic elements. 
Given that, to prove the statement in dimension three by 
Proposion~\ref{prop:multiplearrow_cor} it is sufficient to show that 
$\ideal(\polytope(K_4,\theta))$ is generated in degree two for the only simple 3-regular prime graph $K_4$ on $2\cdot 3-2=4$ vertices. Now this holds by 
Proposition \ref{prop:listofcells} (i), Lemma \ref{lemma:centering} and the two-dimensional case settled above. 

(ii) We turn to the four-dimensional case. Taking into account (i), 
Proposition~\ref{prop:multiplearrow_cor}, Proposition~\ref{prop:listofcells} (ii) and (iii),  and 
Lemma \ref{lemma:centering} it remains to show that if 
$\ideal(\polytope(K_{3,3}^*,\theta))$ is not generated in degree two, then 
$\polytope(K_{3,3}^*,\theta)$ itself is 
integral-affinely equivalent to the Birkhoff polytope $B_3$. 

Let $\{u_1,u_2,u_3\}\cup \{w_1,w_2,w_3\}$ be the vertex set of $K_{3,3}$. 
Denote by $v_{i,j}$ the sink in $K_{3,3}^*$ placed on the edge connecting $u_i$ and $w_j$, 
denote by $a_{i,j}$ (respectively $b_{i,j}$) the arrow of $K_{3,3}^*$ from $u_i$ (respectively 
$w_j$)  to $v_{i,j}$, as it is shown in the figure below.  

$$\begin{tikzpicture}[>=open triangle 45,scale=0.8]   
\foreach \x in {(0,0),(2,0),(4,0),(0,4),(2,4),(4,4),(1,2),(4,2),(2,2)} \filldraw \x circle (2pt); 
\draw [->] (0,0)--(1,2); \draw [->] (2,4)--(1,2); \draw (4,0)--(0,4); \draw [->] (4,0)--(4,2); \draw [->] (4,4)--(4,2);
\draw[->] (4,0)--(2,2); \draw [->] (0,4)--(2,2); 
\node[right] at (2,2) {$v_{3,1}$}; \node[left] at (3,1) {$a_{3,1}$}; \node[left] at (1,3) {$b_{3,1}$};
\node[left] at (0.5,1) {$a_{1,2}$}; \node[right] at (1.5,3) {$b_{1,2}$}; 
\node[right] at (4,1) {$a_{3,3}$}; \node[right] at (4,3) {$b_{3,3}$}; 
\node[left] at (1,2) {$v_{1,2}$};
\node[above] at (0,4) {$w_1$};
\node[above] at (2,4) {$w_2$};
\node[above] at (4,4) {$w_3$};
\node[right] at (4,2) {$v_{3,3}$};
\node[below] at (0,0) {$u_1$};
\node[below] at (2,0) {$u_2$};
\node[below] at (4,0) {$u_3$};
\end{tikzpicture}$$ 

By Proposition~\ref{prop:listofcells} (iii) the toric ideal of a cell of $\polytope(K_{3,3}^*,\theta)$ is generated in degree at most two unless the cell is integral-affinely equivalent to the Birkhoff polytope $B_3$. 
Supose that $\polytope(K_{3,3}^*,\theta)_{\uk}$ is integral-affinely equivalent to $B_3$. 
Denote by $\omega$ the weight 
\[\omega(u_i)=-1, \quad \omega(w_i)=-2,\quad \omega(v_{i,j})=1 \qquad (i,j=1,2,3).\] 
By Proposition~\ref{prop:listofcells} (iii) we have  $\polytope(K_{3,3}^*,\theta)_\uk = \uk + \polytope(K_{3,3}^*,\omega)_\unull$. On the other hand  
$\polytope(K_{3,3}^*,\omega)_{\unull}=\polytope(K_{3,3}^*,\omega)$.    
The six vertices of $\polytope(K_{3,3}^*,\omega)_\unull \simeq B_3$ can be indexed by the elements of the symmetric group $S_3$: denote by $\sigma_{pqr}$  the vertex defined by $\sigma_{pqr}(a_{1,p}) = \sigma_{pqr}(a_{2,q}) = \sigma_{pqr}(a_{3,r}) =1$ and $\sigma_{pqr}(a_{i,j}) = 0$ for $(i,j) \notin \{(1,p),(2,q),(3,r)\}$, where $\{p,q,r\}=\{1,2,3\}$ 
(note that a point $x\in\polytope(K_{3,3}^*,\theta)$ is uniquely determined by the coordinates $(x(a_{i,j})\mid i,j=1,2,3)$, since $x(b_{i,j})=\theta(v_{i,j})-x(a_{i,j})$). 
With this notation $\ideal(\polytope(K_{3,3}^*,\omega))$ is generated by a single cubic relation 
corresponding to 
$s = \sigma_{123} + \sigma_{312} + \sigma_{231} = \sigma_{132} + \sigma_{321} + \sigma_{213}$. Therefore 
$\ideal(\polytope(K_{3,3}^*,\theta)_\uk)$ is generated by a single 
cubic relation 
corresponding to $s'=s+3\uk\in\semigr(\polytope(K_{3,3}^*,\theta)_\uk)_3$ and the equality 
\[s' = \sigma'_{123} + \sigma'_{312} +\sigma'_{231} =\sigma'_{132} + \sigma'_{321} + \sigma'_{213}\]
where $\sigma'_{pqr}=\uk+\sigma_{pqr}$.  
In particular, there are two equivalence classes with respect to the relation $\sim_{s'}$ 
on $\semigr(\polytope(K_{3,3}^*,\theta)_{\uk})_1$, namely 
$\{\sigma'_{123}, \sigma'_{312},\sigma'_{231}\}$ and 
$\{\sigma'_{132}, \sigma'_{321}, \sigma'_{213}\}$. 
If $\uk(a_{i,j})>0$ for some $i,j$, say $\uk(a_{1,1}) > 0$, then  
$\sigma'_{123}+\sigma'_{132}=\sigma_{123}+\uk+\sigma_{132}+\uk \leq s'$, 
and hence by Proposition~\ref{prop:equivclasses}  
the element  
\[t_{\sigma'_{123}}t_{\sigma'_{312}}t_{\sigma'_{231}}-
t_{\sigma'_{132}}t_{\sigma'_{321}}t_{\sigma'_{213}}\in\ideal(\polytope(K_{3,3}^*,\theta))\] 
is contained in the ideal generated by the quadratic elements in $\ideal(\polytope(K_{3,3}^*,\theta))$. 

Suppose finally that  $\uk(a_{i,j}) = 0$ for all $i,j \in \{1,2,3\}$.  
Then $\sigma_{123}+\uk\in \polytope(K_{3,3}^*,\theta)$ implies $\theta(u_i)=-1$ 
for $i=1,2,3$, and hence 
\[\{x(a_{i,j})\mid x\in \mz^{(K_{3,3}^*)_1}\cap \polytope(K_{3,3}^*,\theta)\}\subseteq \{0,1\}.\]
Observe that  $\sigma_{pqr}+\uk\in \polytope(K_{3,3}^*,\theta)_\uk$ for all $\sigma_{pqr}$ 
implies that 
\[\{x(a_{i,j})\mid x\in \mz^{(K_{3,3}^*)_1}\cap \polytope(K_{3,3}^*,\theta)\}= \{0,1\}. \] 
Moreover, since $x(b_{i,j})=\theta(v_{i,j})-x(a_{i,j})$, we have 
\[\{x(b_{i,j})\mid x\in \mz^{(K_{3,3}^*)_1}\cap \polytope(K_{3,3}^*,\theta)\}= \{\theta(v_{i,j}),\theta(v_{i,j})-1\}\] 
and 
\[\uk(b_{i,j})=\theta(v_{i,j})-1.\]
It follows that 
\[\polytope(K_{3,3}^*,\theta)=\uk+\polytope(K_{3,3}^*,\omega)_\unull=\polytope(K_{3,3}^*,\theta)_\uk,\] 
thus $\polytope(K_{3,3}^*,\theta)$ itself 
is integral-affinely equivalent to the Birkhoff polytope $B_3$, which indeed has the above cubic relation that is not generated in lower degree. 
\end{proof}

\begin{corollary}\label{cor:bogvad}
B{\o}gvad's conjecture holds for quiver polytopes of dimension at most four; that is, 
the toric ideal of a smooth quiver polytope of dimension at most four is generated in degree two.  
\end{corollary}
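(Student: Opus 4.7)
The plan is to derive Corollary~\ref{cor:bogvad} directly from Theorem~\ref{thm:cells}. For a smooth quiver polytope $\polytope(\quiver,\theta)$ of dimension $d\le 4$, part~(i) of Theorem~\ref{thm:cells} already handles $d\le 3$ with no smoothness hypothesis at all. For $d=4$, Theorem~\ref{thm:cells}(ii) asserts that either $\ideal(\quiver,\theta)$ is generated in degree two, or $\polytope(\quiver,\theta)$ is integral-affinely equivalent to the Birkhoff polytope $B_3$. It therefore suffices to verify that $B_3$ is \emph{not} a smooth lattice polytope; once that is known, a smooth four-dimensional quiver polytope cannot fall into the exceptional case, and so its toric ideal is generated in degree two.

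The non-smoothness of $B_3$ follows from the standard observation that every smooth lattice polytope is simple: at a smooth vertex $v$ of a $d$-dimensional polytope, the ray generators of $\cone(\polytope-v)$ form a $\mz$-basis of the ambient lattice, so in particular the cone has exactly $d$ ray generators and $v$ is incident to exactly $d$ edges. Now $B_3$ is four-dimensional with six vertices indexed by the permutation matrices $P_\sigma$, $\sigma\in S_3$, and it is classical that in $B_n$ two vertices $P_\sigma,P_\tau$ span an edge precisely when $\sigma\tau^{-1}$ is a single nontrivial cycle. Since every non-identity element of $S_3$ is itself a single cycle (a transposition or a $3$-cycle), every pair of distinct vertices of $B_3$ spans an edge, so each vertex has valency $5$ instead of the required $4$. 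Consequently $B_3$ is not simple, hence not smooth.

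The main obstacle is essentially nil: Theorem~\ref{thm:cells} carries the substantive content, and the non-smoothness of $B_3$ is a short combinatorial check. As an alternative to the simplicity argument, one could inspect a single vertex (say $\sigma_{123}$) directly and observe that the five primitive edge vectors pointing from it to the other vertices lie in a rank-$4$ lattice and therefore cannot form part of a $\mz$-basis, so $\sigma_{123}$ is a singular vertex of $B_3$ in the sense of the paper.
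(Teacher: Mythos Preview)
Your argument is correct and matches the paper's intended derivation: the corollary is stated without proof immediately after Theorem~\ref{thm:cells}, and the only missing ingredient is precisely that $B_3$ is not smooth, which you establish cleanly via the simplicity criterion. There is nothing to add.
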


%%%%%%%%%%%%%%%%%%%%%%%%%%%%

\section{Toric ideals of compressed polytopes}\label{sec:relations}

Let $\polytope$ be a lattice polytope with facet presentation $$\polytope = \{x \in \mr^d |\: \<x,a_F\> \geq c_F \mbox{ for all facets F}\},$$ where $a_F$ is the primitive integer normal vector of the facet $F$. The {\it width} of $\polytope$ with respect to the facet F is then defined as $$\max_{x \in \polytope}\{ \<x,a_F\>-c_F\}.$$
We will call a lattice polytope {\it compressed} if it has width one with respect to all of its facets. Note that the quiver cells occurring in Section \ref{sec:quivercells} are compressed by definition. 
In Proposition \ref{prop:compressed} we recall some equivalent characterizations of compressed polytopes (cf. Theorem 2.3 in \cite{haase-etal} or Theorem 2.4 in \cite{sullivant}).

\begin{proposition}\label{prop:compressed}
For a lattice polytope $\polytope$ the following are equivalent:
\begin{itemize}
\item[(i)] $\polytope$ is compressed.
\item[(ii)] $\polytope$ is integral-affinely equivalent to the intersection of the unit cube with an affine subspace.
\item[(iii)] Every weak pulling triangulation of $\polytope$ is unimodular.
\end{itemize}
\end{proposition}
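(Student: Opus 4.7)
The plan is to establish the three equivalences by proving $(\mathrm{ii})\Rightarrow(\mathrm{i})$, then $(\mathrm{i})\Rightarrow(\mathrm{ii})$, and then the pair $(\mathrm{i})\Leftrightarrow(\mathrm{iii})$. The first direction is essentially a tautology from the definition, the second is a clean geometric construction, and the third is the classical Stanley/Ohsugi--Hibi characterization via pulling triangulations, which is the subtler part.

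For $(\mathrm{ii})\Rightarrow(\mathrm{i})$: if $\polytope$ is integral-affinely equivalent to $[0,1]^n\cap A$ for some affine subspace $A\subseteq \mr^n$, each facet of $[0,1]^n\cap A$ either equals a facet of $[0,1]^n$ intersected with $A$, or is inherited from such. In either case the facet defining inequality is of the form $x_i\ge 0$ or $x_i\le 1$ restricted to $A$, and the width of $[0,1]^n\cap A$ in the intrinsic normal direction is at most $1$. Since integral-affine equivalence preserves facet presentations and widths, $\polytope$ itself is compressed.

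For $(\mathrm{i})\Rightarrow(\mathrm{ii})$: let $\polytope\subset \mr^d$ be compressed with facet presentation $\polytope=\{x\mid \langle x,a_{F_i}\rangle\ge c_{F_i},\ i=1,\dots,n\}$ and consider the affine map
\[
\Phi:\affspan(\polytope)\to \mr^n,\qquad \Phi(x)=(\langle x,a_{F_i}\rangle-c_{F_i})_{i=1}^n.
\]
By compressedness, $\Phi(\polytope)\subseteq [0,1]^n$; moreover the facet normals span the underlying linear subspace of $\affspan(\polytope)$, so $\Phi$ is injective, and $\Phi(\polytope)=\Phi(\affspan(\polytope))\cap[0,1]^n$. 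Integrality and primitivity of the $a_{F_i}$ imply $\Phi$ is a lattice map. The delicate step is to check that $\Phi$ gives a bijection between $\affspan(\polytope)\cap\mz^d$ and $\Phi(\affspan(\polytope))\cap\mz^n$; equivalently, that $\Phi$ does not enlarge the ambient lattice. I expect to handle this by observing that each vertex $v$ of $\polytope$ is a lattice point where sufficiently many of the $\Phi$-coordinates vanish, so that locally near $v$ the coordinate functions $\langle \cdot,a_{F_i}\rangle$ form part of a $\mz$-basis of the dual of the translated lattice; compressedness rules out the obstruction coming from "stretched" normal fans.

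For $(\mathrm{i})\Leftrightarrow(\mathrm{iii})$: $(\mathrm{i})\Rightarrow(\mathrm{iii})$ is the classical argument of Stanley. Any weak pulling triangulation $\Delta$ of $\polytope$ has simplices whose vertex sets are, by construction, obtained by iteratively selecting lattice points and pulling; each maximal simplex $\sigma$ of $\Delta$ is a face of the form cut out by a selection of facet inequalities made tight together with width-one transverse directions, and compressedness forces the primitive edge vectors of $\sigma$ at its "last pulled" vertex to be part of a lattice basis, so $\sigma$ is unimodular. Conversely, for $(\mathrm{iii})\Rightarrow(\mathrm{i})$, I would argue contrapositively: if some facet $F$ has width $\ge 2$, there exists a lattice point $v\in\polytope$ with $\langle v,a_F\rangle-c_F\ge 2$; taking a pulling order that pulls $v$ first and arranges the remaining lattice points so that a simplex $\sigma$ of the resulting triangulation has $v$ and a lattice point on $F$ among its vertices, the coordinate $\langle\cdot,a_F\rangle$ takes at least three distinct integer values on the vertices of $\sigma$, forcing normalized volume at least $2$ by an integer-lattice argument.

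The main obstacle will be the lattice-preservation point in $(\mathrm{i})\Rightarrow(\mathrm{ii})$: checking that "lattice points of $\polytope$" translate cleanly to "lattice points of the unit cube lying on $A$" is where one has to use compressedness essentially (not just integrality of the facet normals). Everything else is either definitional or a careful bookkeeping of primitive edge vectors in pulling triangulations.
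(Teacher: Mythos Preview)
The paper does not prove this proposition at all: it is stated with the remark ``we recall some equivalent characterizations of compressed polytopes (cf.\ Theorem~2.3 in \cite{haase-etal} or Theorem~2.4 in \cite{sullivant})'' and then used as a black box. So there is nothing to compare your argument against; the authors are simply quoting a known result from the literature.

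That said, your sketch has real gaps that would need to be filled before it counts as a proof. The most serious is exactly the one you flag in $(\mathrm{i})\Rightarrow(\mathrm{ii})$: you define $\Phi(x)=(\langle x,a_{F_i}\rangle-c_{F_i})_i$ and note that the hard part is showing $\Phi$ induces a bijection between $\affspan(\polytope)\cap\mz^d$ and $\Phi(\affspan(\polytope))\cap\mz^n$, but ``I expect to handle this by observing\dots'' is not an argument. Your suggested mechanism (that the facet coordinates at a vertex form part of a $\mz$-basis) is essentially a smoothness statement, and compressed polytopes need not be smooth (the Birkhoff polytope $B_3$ is the paper's own running example). One clean route is to first show that any integer point of $\Phi(\polytope)$ lies in $\{0,1\}^n$ and hence is $\Phi$ of a vertex of $\polytope$ (using compressedness and injectivity of $\Phi$), and then to upgrade this to a statement about the whole affine span by using a unimodular simplex inside $\polytope$; but this last step either invokes $(\mathrm{iii})$ or requires an independent argument. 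In $(\mathrm{ii})\Rightarrow(\mathrm{i})$ you also skate over why the \emph{primitive} intrinsic normal of a facet of $[0,1]^n\cap A$ has width exactly $1$ rather than a proper divisor of $1$; this is easy but should be said. Finally, your treatment of $(\mathrm{i})\Leftrightarrow(\mathrm{iii})$ is at the level of a plan, not a proof: in the forward direction ``compressedness forces the primitive edge vectors of $\sigma$ \dots\ to be part of a lattice basis'' is precisely the content to be established, and in the contrapositive direction you assert without justification that a pulling order can be arranged to produce a simplex with the desired bad behaviour.

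If you want a self-contained proof rather than a citation, the cleanest published arguments are the ones the paper points to; Sullivant's Theorem~2.4 in particular handles the lattice-preservation issue carefully.
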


\begin{remark}\label{remark:compressed_quivercell} 
{\rm In view of Lemma~\ref{lemma:finitecells} (i) and Proposition~\ref{prop:compressed} it is clear that the integral-affine equivalence classes of compressed quiver polytopes are exactly the integral-affine equivalence classes of the quiver cells. Note that any compressed polytope realized as the intersection of the unit cube with an affine subspace equals to its own $\unull$ cell. } 
\end{remark} 

The strategy to prove our result in Section~\ref{sec:quivercells} can be summarized as follows: First we investigated the toric ideals of the compressed quiver polytopes, and 
found that essentially the only example up to dimension four that is not generated in degree two is the toric ideal of the Birkhoff polytope $B_3$ all of whose vertices are singular. 
Consequently when such cells appear in a smooth quiver polytope they must have neighboring cells incident to each vertex. We then proceeded to show that the degree three generator of the ideal of $B_3$ can always be generated in lower degree by using lattice points of one of the neighboring cells. We hope that similar strategy can be applied in proving quadratic generation of toric ideals of other classes of latice polytopes. 
As a first step towards this goal we establish some conditions on the arrangement of the smooth and singular vertices that guarantee that the toric ideal of a compressed polytope 
is generated in degree two. 

\begin{remark}\label{rem:bogvad_compressed} {\rm 
It is known that B{\o}gvad's conjecture holds for compressed polytopes. 
Indeed, a compressed polytope is integral-affinely equivalent to a 0-1 polytope by Proposition~\ref{prop:compressed}, and a smooth 0-1 polytope is the product  
of 0-1 simplices by \cite{kaibel-wolf}, hence the toric ideal of a smooth compressed polytope is generated in degree two (say by Lemma~\ref{lem:productpoly}), moreover it possesses a quadratic quadratic Gr\"{o}bner basis (by \cite[Theorem 13]{sullivant2}). }
\end{remark}

For a lattice polytope $\polytope$ and a lattice point $s \in \semigr(\polytope)_k = \{k\polytope\cap \mz^d\}\times\{k\}$ we define the {\it support} of $s$ as, $$\supp(s):= \{F\mbox{ is a facet of }\polytope \mid\:s\notin kF\times \{k\}\}.$$ Note that when $(\quiver,\theta)$ is tight (cf. \cite{domokos-joo}) this is consistent with our earlier definition of $\supp(m)$ for a lattice point $m$ in the quiver polytope $\polytope(\quiver,\theta)$.

\begin{lemma}\label{lemma:support} 
 Let $\polytope$ be a compressed lattice polytope. For $v,w\in \semigr(\polytope)$ we have $\supp(v+w) = \supp(v)\cup \supp(w)$, in particular $\supp(nv)=\supp(v)$ for any $n\in\mathbb{N}$. 
\end{lemma}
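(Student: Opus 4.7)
The plan is to unpack the definition of $\supp$ directly in terms of the facet-defining inequalities of $\polytope$. Let each facet $F$ be cut out in $\polytope$ by a primitive inequality $\langle x, a_F\rangle \geq c_F$. For any $u = (u_0,\ell) \in \semigr(\polytope)_\ell$, the point $u_0$ lies in $\ell\polytope$ and hence satisfies $\langle u_0, a_F\rangle \geq \ell c_F$; by the definition of support, $F \notin \supp(u)$ amounts exactly to equality $\langle u_0, a_F\rangle = \ell c_F$.

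Writing $v = (v_0,k)$ and $w = (w_0,j)$, the core of the argument is pure additivity: we have $\langle v_0 + w_0, a_F\rangle \geq (k+j) c_F$, and since the summands are bounded below by $k c_F$ and $j c_F$ respectively, the combined inequality is an equality if and only if both summand inequalities are equalities. Translating back via the previous paragraph, this is precisely the statement that $F \notin \supp(v+w)$ if and only if $F \notin \supp(v)$ and $F \notin \supp(w)$, yielding $\supp(v+w) = \supp(v) \cup \supp(w)$.

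The ``in particular'' claim follows by a trivial induction on $n \geq 1$, with inductive step $\supp((n+1)v) = \supp(nv) \cup \supp(v) = \supp(v)$. I do not foresee any real obstacle here; in fact, inspecting the argument one sees that the compressedness of $\polytope$ is not invoked at all, and the identity $\supp(v+w) = \supp(v) \cup \supp(w)$ holds for every lattice polytope. The compressedness hypothesis is presumably included only because this lemma will be combined with compressed-specific facts in subsequent applications.
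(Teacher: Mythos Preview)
Your proof is correct and matches the paper's argument essentially line for line: both unpack the facet inequalities $\langle x,a_F\rangle\ge c_F$, observe that $\langle v_0+w_0,a_F\rangle=(k+j)c_F$ forces equality in each summand, and translate this back to the support. Your observation that compressedness is not actually used is also accurate; the paper's proof likewise makes no appeal to it.
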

\begin{proof} 
We have $v\in\semigr(\polytope)_k$ and $w\in\semigr(\polytope)_l$. Take a facet 
$F=\{x\mid \<x,a_F\>=c_F\}$ of $\polytope$. Since $\<v,a_F\> \geq kc_F$ and $\<w,a_F\>\geq lc_F$, we have that $\<v+w,a_F\>=(k+l)c_F$ if and only if $\<v,a_F\>=kc_F$ and $\<w,a_F\>=lc_F$. By definition of the support this implies that $F \notin \supp(v+w)$ if and only if $F \notin \supp(v)$ and $F \notin \supp(w)$, which is what we needed to show. 
\end{proof} 

Next we collect some well-known elementary properties of compressed polytopes. 
\begin{lemma}\label{lemma:compprops} 
Let $\polytope$ be a compressed lattice polytope. Then we have the following:
\begin{itemize}
    \item[(i)] $\polytope$ is normal.
    \item[(ii)] For $s \in \semigr(\polytope)_{k}$ and $m \in \semigr(\polytope)_{1}$ with $k > 1$, we have that $m \leq s$ if and only if $\supp(m) \subseteq \supp(s)$.
    \item[(iii)] Every lattice point in $\polytope$ is a vertex. In particular if $m_1,m_2 \in \semigr(\polytope)_{1}$, $m_1\neq m_2$,  then $\supp(m_1) \not\subset \supp(m_2)$.
\end{itemize}
\end{lemma}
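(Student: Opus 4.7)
My plan is to establish the three parts in the order (iii), (ii), (i), since each subsequent claim leans on the previous one and the entire argument ultimately turns on the width-one property of compressed polytopes.

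For (iii), I would apply Proposition~\ref{prop:compressed}(ii) to replace $\polytope$ (up to integral-affine equivalence) by $L \cap [0,1]^n$ for some affine subspace $L \subset \mr^n$. The lattice points of this realization are then forced to lie in $L \cap \{0,1\}^n$, each of which is a vertex of the unit cube. Since any convex combination of points from $L \cap [0,1]^n$ is in particular a convex combination within $[0,1]^n$, such a point is extreme in the smaller polytope as well, hence a vertex of $\polytope$. For the ``in particular'' clause, now that every $m \in \semigr(\polytope)_1$ is a vertex, the minimal face of $\polytope$ containing $m_2$ is $\{m_2\}$ itself, and this minimal face coincides with $\polytope \cap \bigcap_{F \notin \supp(m_2)} F$. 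If $\supp(m_1) \subseteq \supp(m_2)$, then $\<m_1, a_F\> = c_F$ for every $F \notin \supp(m_2)$, so $m_1$ sits in that intersection and $m_1 = m_2$.

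For (ii), the forward direction is immediate from Lemma~\ref{lemma:support}: writing $s = m + (s-m)$ with $s-m \in \semigr(\polytope)_{k-1}$ gives $\supp(s) = \supp(m) \cup \supp(s-m) \supseteq \supp(m)$. For the converse, assuming $\supp(m) \subseteq \supp(s)$, I would verify the facet inequality $\<s-m, a_F\> \geq (k-1)c_F$ for every facet $F$ of $\polytope$. If $F \notin \supp(s)$ then $F \notin \supp(m)$ as well, so $\<s, a_F\> = kc_F$ and $\<m, a_F\> = c_F$, and the inequality holds as an equality. If $F \in \supp(s)$ then integrality of $s$ forces $\<s, a_F\> \geq kc_F + 1$, while the width-one property gives $\<m, a_F\> \leq c_F + 1$; subtracting yields $\<s-m, a_F\> \geq (k-1)c_F$. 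Hence $s-m \in (k-1)\polytope \cap \mz^d = \semigr(\polytope)_{k-1}$, i.e.\ $m \leq s$.

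Finally, for (i) I would combine (ii) with the observation that every non-empty face of a lattice polytope has a lattice vertex. Given $s \in \semigr(\polytope)_k$ with $k \geq 2$, the set $F_s := \polytope \cap \bigcap_{F \notin \supp(s)} F$ is a non-empty face of $\polytope$ (it contains $s/k$), so any vertex of $F_s$ is a lattice point $m \in \polytope \cap \mz^d$ with $\supp(m) \subseteq \supp(s)$. By (ii) we conclude $m \leq s$, and induction on $k$ then yields a decomposition of $s$ into $k$ lattice points of $\polytope$. Alternatively, (i) is an immediate consequence of Proposition~\ref{prop:compressed}(iii), since lattice polytopes with unimodular triangulations are normal. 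The only genuinely delicate step in the whole argument is isolating where the compressed hypothesis enters in (ii), namely the width-one bound $\<m, a_F\> \leq c_F + 1$; once this is pinpointed, the rest is formal facet bookkeeping.
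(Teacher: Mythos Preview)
Your proof is correct and, for the central part (ii), follows exactly the paper's argument via the width-one facet bound $\langle m,a_F\rangle \le c_F+1$. The paper differs only cosmetically: it proves (iii) directly (a nontrivial convex combination of lattice points would force $c_F < \langle m,a_F\rangle < c_F+1$ for some facet $F$, contradicting integrality) rather than via the cube realization, and for (i) it simply cites an external reference rather than deducing normality from (ii) by induction as you do.
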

\begin{proof}
We refer to Proposition 9.3.20 of \cite{lorea-rambau-santos} for the proof of (i). For (ii) let
$$\polytope = \{x \in \mr^d |\: \<x,a_F\> \geq c_F \mbox{ for all facets F}\},$$ be the facet presentation of $\polytope$ where the $a_F$ are primitive integral vectors. Set $s'$ to be the projection of $s-m$ onto the first $d$ coordinates, and note that $m \leq s$ if and only if $s' \in (k-1)\polytope$.  From the definition of compressed polytopes it follows that $\<s',a_F\> < (k-1)c_F$ if and only if $F \notin \supp(s)$ and $F \in \supp(m)$. Hence $s' \in (k-1)\polytope$ if and only if $\supp(m) \subseteq \supp(s)$.
For (iii) if $m \in \polytope \cap \mz^d$ is not a vertex then it can be obtained as a nontrivial convex combination of some of the vertices. It follows that for some $F$ we have $c_F + 1 > \<m,a_F\> > c_F$ contradicting that both $m$ and $a_F$ are integer vectors.
\end{proof}

We say that two vertices of a polytope are neighbours if they lie on the same edge with no intermediate lattice point. We make the following observations:

\begin{lemma}\label{lem:neighbours}
\begin{itemize}
    \item[(i)] Let $\polytope$ be a lattice polytope. The vertices $v_1, v_2 \in \polytope \cap \mz^d$ are neighbouring if and only if there are no other lattice points whose support is a subset of $\supp(v_1) \cup \supp(v_2)$.
    \item[(ii)] Let $\polytope$ be a compressed lattice polytope.  Let $s$ be an element of $\semigr(\polytope)$ and  $v_1,v_2 \leq s$ two vertices of $\polytope$. Then there is a series of vertices $w_1,\dots,w_k \in \polytope$, such that $w_1,\dots,w_k \leq s$, $v_1 = w_1$, $v_2 = w_k$  and $w_i$ is a neighbour of $w_{i+1}$ for all $i:\;1 \leq i \leq k-1$. 
 \end{itemize}
\end{lemma}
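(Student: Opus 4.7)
The plan is to reduce both parts to a single observation: for any face $F$ of $\polytope$ and any lattice point $m \in \polytope$, one has $m \in F$ if and only if $\supp(m)$ is disjoint from the set of facets containing $F$. Applied to $F^*$, the smallest face of $\polytope$ containing both $v_1$ and $v_2$, this identifies the lattice points on $F^*$ with those $m \in \polytope \cap \mz^d$ satisfying $\supp(m) \subseteq \supp(v_1) \cup \supp(v_2)$, since the facets containing $F^*$ are exactly those containing both vertices, i.e.\ the complement of $\supp(v_1) \cup \supp(v_2)$.

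For (i), if the only lattice points with support inside $\supp(v_1) \cup \supp(v_2)$ are $v_1, v_2$ themselves, then $F^*$ has at most two lattice points; since any face of dimension $\geq 2$ has at least three vertices, $\dim F^* \leq 1$. As $v_1 \neq v_2$, this forces $F^* = [v_1, v_2]$, an edge with no intermediate lattice point, so $v_1, v_2$ are neighbours. Conversely, if $v_1, v_2$ are neighbours, then $F^*$ equals their connecting edge and its only lattice points are $v_1, v_2$. For (ii), I would introduce the face $\polytope_s := \{x \in \polytope \mid \<x,a_F\> = c_F \text{ for all } F \notin \supp(s)\}$. Applying Lemma \ref{lemma:compprops}(ii) (the degree-one case of $s$ is trivial, giving $v_1 = v_2 = s$), a vertex $v$ of $\polytope$ satisfies $v \leq s$ iff $\supp(v) \subseteq \supp(s)$, iff $v$ is a vertex of $\polytope_s$. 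Since the $1$-skeleton of any polytope is connected, one can produce a sequence $w_1 = v_1, \ldots, w_k = v_2$ of vertices of $\polytope_s$ whose consecutive pairs span edges of $\polytope_s$. These are also edges of $\polytope$, and by Lemma \ref{lemma:compprops}(iii) no edge of the compressed polytope $\polytope$ contains an intermediate lattice point, so consecutive $w_i$ are neighbours as required.

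No substantial obstacle is expected; the argument is routine polyhedral geometry once the support--face correspondence is pinned down. The only care required is in applying the compressed hypothesis in (ii), where Lemma \ref{lemma:compprops}(ii) translates the divisibility condition $v \leq s$ into the support containment $\supp(v) \subseteq \supp(s)$, and Lemma \ref{lemma:compprops}(iii) rules out intermediate lattice points on edges.
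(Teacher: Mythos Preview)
Your proof is correct and follows essentially the same approach as the paper: both parts rest on the support--face correspondence you describe, and the paper likewise reduces (i) to the characterization of the smallest face containing $v_1,v_2$ as $\bigcap_{F\notin\supp(v_1)\cup\supp(v_2)}F$. The only minor variation is in (ii): the paper works inside this smallest face, while you work inside the larger face $\polytope_s$; both arguments then appeal (you explicitly, the paper implicitly) to connectedness of the $1$-skeleton and to Lemma~\ref{lemma:compprops} to conclude.
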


\begin{proof} 
First note that the smallest face containing both $v_1$ and $v_2$ is 
\[\cap_{F \notin \supp(v_1) \cup \supp(v_2)} F\] 
by the definition of the support. Now (i) follows from the fact that $v_1, v_2$ are neighbouring if and only if there is no other lattice points on the smallest face of $\polytope$ containing both of them.

For (ii) take a sequence of neighbouring vertices 
\[w_1,\dots,w_k \in \cap_{F \notin \supp(v_1) \cup \supp(v_2)} F\] 
with $v_1 = w_1$ and $v_2 = w_k$. Now it follows from (ii) of Lemma~\ref{lemma:compprops}  that for $i = 1,\dots,k$ we have that $\supp(w_i) \subseteq \supp(v_1) \cup \supp(v_2) \subseteq \supp(s)$ hence $w_i \leq s$ as required.
\end{proof}

 Now we can state the main results of this section:

\begin{theorem}\label{thm:0-1deg2}
Let $\polytope$ be a compressed lattice polytope. If $\polytope$ contains no neighbouring singular vertices then the toric ideal of $\polytope$ is generated by its quadratic elements.
\end{theorem}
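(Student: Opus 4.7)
By Proposition~\ref{prop:equivclasses} it suffices to verify that for every $s\in\semigr(\polytope)_r$ with $r\ge 3$ the relation $\sim_s$ has a single equivalence class on the set $\{m\in\semigr(\polytope)_1\mid m\le s\}$. Lemma~\ref{lemma:compprops}(ii) and (iii) identify this set with the vertices $v$ of $\polytope$ satisfying $\supp(v)\subseteq\supp(s)$, and Lemma~\ref{lem:neighbours}(ii) further reduces the problem to proving $v_1\sim_s v_2$ for any two neighbouring vertices $v_1,v_2\le s$. The hypothesis lets us assume $v_1$ is smooth, so the primitive edge vectors $e_1,\dots,e_d$ at $v_1$ form a $\mz$-basis of the lattice; after relabelling, $v_2=v_1+e_1$.

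I plan to prove the stronger statement $v_1+v_2\le s$, which gives $v_1\sim^{*}_s v_2$ directly. Using Lemma~\ref{lemma:compprops}(ii) on $s-v_2\in\semigr(\polytope)_{r-1}$ (the hypothesis $r-1\ge 2$ is crucial here), this reduces to $\supp(v_1)\subseteq\supp(s-v_2)$. For a facet $F\in\supp(v_1)$ with $v_2\in F$, the inclusion is immediate: $s-v_2\in(r-1)F$ would imply $s\in rF$, contradicting $F\in\supp(v_1)\subseteq\supp(s)$. The crux is ruling out a \emph{bad} facet $F$ with $v_1,v_2\notin F$ and (for contradiction) $s-v_2\in(r-1)F$.

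Assuming such an $F$, normality of the compressed face $F$ and of $\polytope$ yield $s-v_2=\sum_j w_j$ with $w_j\in F\cap\mz^d$ and $s-v_1=\sum_j u_j$ with $u_j\in\polytope\cap\mz^d$. Compressedness pins down $\langle v_1,a_F\rangle=\langle v_2,a_F\rangle=c_F+1$ and $\langle w_j,a_F\rangle=c_F$; evaluating $\langle\cdot,a_F\rangle$ on $v_1+\sum u_j=v_2+\sum w_j$ then forces $\langle u_j,a_F\rangle=c_F$, so $u_j\in F$ as well. Expand $u_j-v_1=\sum_i\alpha_{j,i}e_i$ and $w_j-v_1=\sum_i\beta_{j,i}e_i$ with $\alpha_{j,i},\beta_{j,i}\ge 0$ (possible by smoothness of $v_1$), and match the $e_1$-coefficient in $\sum(u_j-v_1)=e_1+\sum(w_j-v_1)$: the identity $\sum_j\alpha_{j,1}=1+\sum_j\beta_{j,1}\ge 1$ forces some $u_{j_0}$ to satisfy $\alpha_{j_0,1}\ge 1$, equivalently $u_{j_0}$ lies off $F_1^{*}$, the facet through $v_1$ opposite the edge $e_1$. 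Smoothness of $v_1$ together with compressedness identify $F_1^{*}$ as the unique facet $F'$ with $\langle e_1,a_{F'}\rangle>0$ (which in the compressed setting is the condition $v_1\in F'$, $v_2\notin F'$), so a facet-by-facet check shows $u_{j_0}-e_1\in\polytope$; the only potentially violated inequality is that of $F_1^{*}$, where $u_{j_0}\notin F_1^{*}$ supplies the necessary slack. The substitution $v_1+u_{j_0}=v_2+(u_{j_0}-e_1)$ then produces an alternative decomposition $s=v_2+(u_{j_0}-e_1)+\sum_{j\ne j_0}u_j$ in which $v_2$ appears and the remaining summands still lie in $F$.

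The main anticipated obstacle is converting this single swap into the desired contradiction. My plan is to iterate: the integer $\sum_j\alpha_{j,1}$ strictly decreases at each swap, while the identity $\sum_j\alpha_{j,1}\ge 1$ obtained from the $e_1$-coefficient bookkeeping must persist under the same analysis applied to the new decomposition; termination therefore yields the contradiction that rules out the bad facet $F$. Making this iteration go through requires that every intermediate decomposition still contains a summand outside $F_1^{*}$, which I expect to follow from re-running the $e_1$-coefficient analysis at each stage, although this is the technically most delicate point and is where the uniqueness of $F_1^{*}$ provided by smoothness is used in an essential way.
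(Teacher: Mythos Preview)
Your plan aims to establish the stronger statement $v_1+v_2\le s$ (i.e.\ $v_1\sim^*_s v_2$ directly), but this statement is false in general. Take $\polytope$ to be the unit square in $\mr^2$ (a smooth, compressed polytope), $v_1=(0,0)$, $v_2=(1,0)$, and $s=(1,2)\in\semigr(\polytope)_3$. Then $v_1$ and $v_2$ are neighbouring smooth vertices with $v_1,v_2\le s$, yet $s-v_1-v_2=(0,2)\notin\polytope$, so $v_1+v_2\not\le s$. In this example the facet $F=\{y=1\}$ is a genuine ``bad facet'' in your sense: $v_1,v_2\notin F$ and $s-v_2=(0,2)\in 2F$. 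Consequently no iteration can produce the contradiction you seek; the bad facet really exists. Concretely, running your first step here gives $u_1=(0,1)$, $u_2=(1,1)$ with $\alpha_{1,1}=0$, $\alpha_{2,1}=1$, so $\sum_j\alpha_{j,1}=1$ already equals the claimed lower bound and cannot strictly decrease; moreover the swap produces $s=v_2+(0,1)+(0,1)$, which is not a decomposition of $s-v_1$ and so gives nothing to iterate on.

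The paper proves only the weaker (but sufficient) assertion $v_1\sim_s v_2$ via an intermediate vertex. Writing $s=v_1+p_2+\cdots+p_r$ by normality, it uses smoothness at $v_1$ exactly as you do---expanding in the ray generators at $v_1$---to locate some $p_i$ whose expansion involves $e_1=v_2-v_1$ with positive coefficient; but instead of swapping, it rearranges to conclude $\supp(v_2)\subseteq\supp(v_1)\cup\supp(p_i)=\supp(v_1+p_i)$, whence $v_2\le v_1+p_i$ by Lemma~\ref{lemma:compprops}(ii). Then for any $j\notin\{1,i\}$ (such $j$ exists because $r\ge 3$) one has both $p_j+v_2\le s$ and $v_1+p_j\le s$, yielding the chain $v_1\sim^*_s p_j\sim^*_s v_2$. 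Your localisation at the smooth vertex is the right tool; the correct conclusion to extract from it is the divisibility $v_2\le v_1+p_i$, not $v_1+v_2\le s$.
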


\begin{proof}
Recall that we identify the lattice points of $\polytope$ with $\semigr(\polytope)_1$ by slight abuse of the notation. By Proposition \ref{prop:equivclasses} we have to show that for any $k \geq 3$ and $s \in \semigr(\polytope)_k$ the relation $\sim_s$ has precisely one equivalence class. Let us first show that $v_1\sim_s w_1$  when $v_1,w_1 \leq s$ are neighbouring vertices of $\polytope$. By the assumption at least one of them - say $v_1$ - is a smooth vertex. By normality of $\polytope$ there are - not necessarily distinct - vertices $v_2,\dots,v_k,w_2,\dots,w_k$ such that 
\[s = \sum_{i=1}^{k}v_i=\sum_{i=1}^{k}w_i.\] 
By the assumption that $v_1$ and $w_1$ are neighbours one of the ray generators of $\mathrm{Cone}(\polytope - v_1)$ is $w_1-v_1$. Let $u_1-v_1,\dots,u_{dim(\polytope)-1}-v_1$ denote the rest of the ray generators, that together with $w_1-v_1$ form a $\mz$-basis of the lattice $\mz^d \cap \mathrm{Span}(\mathrm{Cone}(\polytope-v_1))$. Consider the ``localized'' equation 
\[\sum_{i=2}^{k} (v_i-v_1)=\sum_{i=1}^{k} (w_i-v_1).\] 
Each $v_i-v_1$ and $w_i-v_1$ decomposes uniquely as a linear combination of the ray generators of  $\mathrm{Cone}(\polytope- v_1)$ with non-negative integer coefficients, so for the two sides to be equal the decomposition of one of the $v_i - v_1$ has to contain $w_1 - v_1$ with a positive coefficient. It follows that for some positive integer $n$ we have an equation 
\[v_i-v_1=n(w_1-v_1)+\sum_{l=1}^j(u_{k_l}-v_1).\]
Rearranging the above equation we get 
\[v_i + (n+j-1)v_1 =nw_1+\sum_{l=1}^j u_{k_l},\] 
implying by Lemma~\ref{lemma:support} that $\supp(w_1) \subseteq \supp(v_i + (n+j-1)v_1 )$ and hence $\supp(w_1) \subseteq \supp(v_i + v_1 )$. By (ii) of Lemma~\ref{lemma:compprops}  we have that  $w_1 \leq v_i + v_1$, and consequently for any $j \notin \{1,i\}$ we have $v_j + w_1 \leq s$ and hence $v_j \sim_s w_1$. Since $v_j \sim_s v_1$ for all $j$, we also have $v_1 \sim_s w_1$. We have established that for any $s$ of degree at least $3$ if $v \leq s$ and $w \leq s$ are neighbouring vertices then we have $v \sim_s w$. Now we are done by Lemma \ref{lem:neighbours}.
\end{proof}

\begin{example}{\rm The presence of any given number of smooth vertices does not guarantee that the toric ideal is generated in degree two, as shown by the following example.
Consider the complete bipartite quiver $\cbq_{3,3}$ and let us write $v_{1,2,3}$ for its sources, $u_{1,2,3}$ for its sinks and $a_{i,j}$ for the arrow pointing from $v_i$ to $u_j$. Let $\quiver$ be a quiver we obtain from $\cbq_{3,3}$ after adding a new vertex $w$, arrows $b_{1,2,3}$ from $v_{1,2,3}$ to $w$, and arrows $c_{1,2,3}$ from $w$ to $u_{1,2,3}$. Set $\theta(v_{1,2,3}) = -1$, $\theta(u_{1,2,3}) = 1$ and $\theta(w) = 0$. In the polytope $\polytope(\quiver,\theta)$ every arrow is incident to a source of weight $-1$ or a sink of weight $1$, hence the coordinates of any lattice point of $\polytope(\quiver,\theta)$ lie in the set $\{0,1\}$. From the definition of quiver polytopes it then follows that $\polytope(\quiver,\theta)$ is the intersection of an affine subspace with the unit cube which by Proposition \ref{prop:compressed} implies that it is compressed. Denoting by $\sigma_{ijk}$ the lattice points of $\polytope(\quiver,\theta)$ that correspond to perfect matchings of $\cbgr_{3,3}$ as in the proof of Theorem \ref{thm:cells}, and setting  $s = \sigma_{123} + \sigma_{312} + \sigma_{231} = \sigma_{132} + \sigma_{312} + \sigma_{213}$, we see that $\sim_s$ has two equivalence classes since $\polytope(\quiver,\theta) \cap \{x \leq s\}$ contains no lattice points other than the $\sigma_{ijk}$. On the other hand consider the vertex $m \in \polytope(\quiver,\theta)$ defined as $m(b_{1,2,3}) = m(c_{1,2,3})  = 1$ and $m(a_{i,j}) = 0$ for all $i,j \in \{1,2,3\}$. The quiver with vertices $\quiver_0$ and arrows $\supp(m)$ is connected, hence $m$ is a smooth vertex by Proposition \ref{prop:smoothvertex}. Now adding multiple copies of the arrows $b_{1,2,3}$ and $c_{1,2,3}$ we can obtain an arbitrarily large amount of smooth vertices in $\polytope(\quiver,\theta)$.
}
\end{example}

\begin{theorem}\label{thm:grobner}
Let $\polytope$ be a compressed lattice polytope. If $\polytope$ has at most one singular vertex then the toric ideal of $\polytope$ has a quadratic Gr\"{o}bner basis.
\end{theorem}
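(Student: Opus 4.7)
The plan is to produce a regular, flag, unimodular triangulation of $\polytope$ and to invoke the standard correspondence between such triangulations and squarefree initial ideals of toric ideals (see e.g.\ Chapter~8 of \cite{sturmfels}): a regular flag unimodular triangulation yields a squarefree quadratic initial ideal of $\ideal(\polytope)$, and hence a quadratic Gr\"obner basis. By Proposition~\ref{prop:compressed}(iii) every pulling triangulation of $\polytope$ is unimodular, and pulling triangulations are always regular; so the entire content of the proof is to arrange flagness via a suitable choice of pulling order.

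If $\polytope$ has no singular vertex, the conclusion already follows from Remark~\ref{rem:bogvad_compressed}; so let $v^*$ be the unique singular vertex of $\polytope$. Order the lattice points of $\polytope$, which by Lemma~\ref{lemma:compprops}(iii) coincide with the vertices, so that $v^*$ comes first, and let $T$ denote the induced pulling triangulation. Its maximal simplices are of two kinds: simplices $\sigma$ coming from the pulling triangulations $T_F$ of facets $F\not\ni v^*$, and the cones $\{v^*\}\cup\sigma$ over such $\sigma$. Each such facet $F$ is compressed (as a face of the compressed polytope $\polytope$) and has only smooth vertices: $v^*\notin F$, and smoothness of $\polytope$ at a vertex $u$ is inherited by $F$ because $\cone(F-u)$ is a face of the simplicial basic cone $\cone(\polytope-u)$. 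By the Kaibel--Wolf theorem \cite{kaibel-wolf} such $F$ is integral-affinely equivalent to a product of $0$-$1$ simplices, and its pulling triangulation is the ``staircase'' triangulation, which is flag.

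It remains to verify that $T$ itself is flag. Let $S\subseteq\polytope\cap\mz^d$ be such that every two-element subset is a face of $T$. A pair $\{u_1,u_2\}$ with $v^*\notin\{u_1,u_2\}$ is an edge of $T$ precisely when $u_1,u_2$ lie in a common facet $F\not\ni v^*$ and are joined by an edge of $T_F$; translating this via Lemmas~\ref{lemma:support} and~\ref{lemma:compprops}(ii) yields the pairwise condition $\mathrm{supp}(v^*)\not\subseteq\mathrm{supp}(u_1)\cup\mathrm{supp}(u_2)$. Granting the Helly-type upgrade
\[\mathrm{supp}(v^*)\not\subseteq\bigcup_{u\in S\setminus\{v^*\}}\mathrm{supp}(u),\]
all of $S\setminus\{v^*\}$ lies in a common facet $F_0\not\ni v^*$, and then the flagness of $T_{F_0}$ together with the cone construction at $v^*$ promotes $S$ to a face of $T$ in both cases $v^*\in S$ and $v^*\notin S$.

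The principal obstacle is precisely this Helly-type step, upgrading pairwise avoidance of $\mathrm{supp}(v^*)$ to global avoidance. We intend to establish it by exploiting that the facets of $\polytope$ lying in $\mathrm{supp}(v^*)$ correspond, via Lemma~\ref{lemma:compprops}(ii), to the ray generators of $\cone(\polytope-v^*)$; combined with the description above of each facet $F\not\ni v^*$ as a product of simplices, the link of $v^*$ in $\polytope$ acquires a rigid combinatorial structure that should force pairwise avoidance to globalize. Once flagness of $T$ is established, the theorem follows at once from the triangulation--initial-ideal correspondence.
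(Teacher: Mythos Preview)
Your approach---build a flag regular unimodular triangulation by pulling from the singular vertex $v^*$ first and invoke the Sturmfels correspondence---is genuinely different from the paper's proof, which never mentions triangulations and instead constructs an explicit graded reverse-lexicographic term order (with $v^*$ largest, not first) and argues directly by induction on degree that every initial monomial is divisible by a quadratic initial monomial. Via the Sturmfels dictionary the paper's order corresponds to pulling $v^*$ \emph{last}; you pull it \emph{first}. So the two routes really are different, and the paper's buys a self-contained elementary argument that never needs the Helly-type step you isolate.

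That step is the genuine gap in your proposal, and you say so yourself. Knowing that $\supp(v^*)\not\subseteq\supp(u_i)\cup\supp(u_j)$ for every pair does not in general force $\supp(v^*)\not\subseteq\bigcup_i\supp(u_i)$: a priori one can have $\supp(v^*)=\{F_1,F_2,F_3\}$ covered by three vertices each missing a different pair, with all pairwise intersections of the $A_{u_i}=\supp(v^*)\setminus\supp(u_i)$ nonempty but the total intersection empty. Your plan to rule this out via ``the rigid combinatorial structure'' of the link of $v^*$ is a hope, not an argument; you would need to show concretely how the product-of-simplices structure of each facet $F\not\ni v^*$ (which governs the pattern of the $A_{u_i}$) forbids such a configuration, and this is where the real work lies. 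Without it you do not have flagness of $T$, and hence no quadratic Gr\"obner basis.

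Two smaller points. First, in a pulling triangulation with $v^*$ pulled first, \emph{every} maximal simplex contains $v^*$; there are not ``two kinds''. Second, the pulling triangulation $T_F$ you obtain on each smooth facet $F\cong\Delta^{p_1}\times\cdots\times\Delta^{p_r}$ is determined by the \emph{restricted} order, which is whatever global order you chose, not a canonical staircase order; so ``its pulling triangulation is the staircase triangulation'' is not automatic. You would need to either argue that \emph{every} pulling triangulation of a product of simplices is flag, or arrange the global order so that it restricts to a staircase order on each such facet simultaneously.
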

\begin{proof} 
The case when $\polytope$ has no singular vertices is known, 
see Remark~\ref{rem:bogvad_compressed}. 
Let us denote by $v_1,\dots,v_k$ the vertices of $\polytope$, let  $R = \mc[t_{v_1},\dots,t_{v_k}]$ and $\varphi$ denote the surjection $R \twoheadrightarrow \mc[\semigr(\polytope)]$ defined by $\varphi(t_{v_i}) = x^{v_i}$. Note that throughout the proof we will use the symbol $|$ for divisibility in the ring $\mc[\semigr(\polytope)]$. As before we will write the facet inequality corresponding to $F$ as $\<x,a_F\> \geq c_F$. Since $\polytope$ is compressed, for each vertex $w$ of $\polytope$ we have that  $\<x,a_F\> = c_F + 1$ if $F \in \supp(w)$ and  $\<x,a_F\> = c_F $ if $F \notin \supp(w)$.

Let us enumerate the facets of $\polytope$ as $F_1, \dots, F_n$ and order the vertices of $\polytope$ as  $v_j > v_k$ if and only if for the smallest $i$ such that $\<v_j, a_{F_i}\> \neq \<v_k, a_{F_i}\>$ we have  $F_i \in \supp(v_j)$. 
After a possible renumbering of the facets we may assume that if $\polytope$ has a singular vertex $v$, then its support  
consists of the first few facets. 
Since in a compressed polytope the supports of distinct vertices can not contain each other, this assumption implies that $v$ is maximal with respect to $\leq$. (If $\polytope$ has no singular vertices, then the facets can be numbered arbitrarily.) Now we define the monomial ordering $\preceq$ on $R$, such that for two monomials $m_1 = \prod_i (t_{v_i})^{l_i}$ and $m_2 = \prod_i (t_{v_i})^{j_i}$ we have $m_2 \preceq m_1$ if and only if either $\deg(m_1) > \deg(m_2)$ or $\deg(m_1) = \deg(m_2)$ and for the smallest $v_i$ (with respect to the ordering $\leq$) such that $l_i \neq j_i$ we have $j_i > l_i$. 

We claim that the quadratic elements of the ideal of relations $\ker(\varphi)$ are a 
Gr\"{o}bner basis under the ordering $\preceq$.  We call a monomial $T$ {\it initial} if it is the initial monomial with respect to $\preceq$ of some element of $\ker(\varphi)$; that is, there exists an $f\in \ker(\varphi)$ such that $T$ is a monomial of $f$ and for all other monomials $U$ of $f$ we have $U\preceq T$. 
The remaining monomials are called {\it standard}. Record that a monomial $m$ is standard if and only if it is minimal with respect to $\preceq$ amongst the monomials $n$ with $\varphi(m)=\varphi(n)$. 
To prove the claim it is sufficient to show that if $m$ is an initial monomial with $\deg(m)\ge 2$, then 
$m$ is divisible by an initial monomial of a quadratic element of $\ker(\varphi)$. We will prove this by induction on the degree. The case $\deg(m)=2$ is trivial. 

Now assume that 
$m =  \prod_{i=1}^j t_{w_i}$ is an initial monomial, where the $w_i$ are not necessarily distinct vertices of $\polytope$ satisfying $t_{w_1} \succeq t_{w_2} \succeq \dots \succeq t_{w_j}$ and $j \geq 3$. Note that $w_j$ is a smooth vertex, since if it was the unique singular vertex then - since we set it to be maximal with respect to the ordering $\leq$ - we would have $w_1 = \dots = w_j$ and since $\supp(w_j)$ does not contain the support of any other vertex $m = (t_{w_j})^j$ would be the unique monomial that maps to $\varphi(m)$ contradicting that it is initial. 
If $t_{w_j}$ is minimal with respect to $\preceq$ in the set $\{t_{v_i}: x^{v_i}|\varphi(m)\}$ then $ \prod_{i=1}^{j-1} t_{w_i}$ can not be standard, otherwise -- by the definition of $\preceq$ and the characterization of standard monomials mentioned above -- $m$ would also be standard.  Thus $ \prod_{i=1}^{j-1} t_{w_i}$ is an initial monomial, hence  is divisible by a quadratic initial monomial by the induction hypothesis, implying in turn that $m$ is divisible by a quadratic initial monomial. It remains to deal with the case when there is a vertex $v' \in \polytope$ such that $ t_{v'}\prec t_{w_j}$ and $x^{v'}|\varphi(m)$. Let $b$ be the smallest integer such that $\<w_j, a_{F_b}\> > \<v', a_{F_b}\> $. Denoting the neighbouring vertices of $w_j$ by $u_1,\dots,u_{\dim(\polytope)}$, by smoothness of $w_j$ we have that $v' - w_j$ can be uniquely written as  
\begin{equation}\label{eq:alpha} 
v' - w_j=\sum_{i=1}^{\dim(\polytope)} \alpha_i(u_i-w_j)
\end{equation} 
where the coefficients $\alpha_i$ are non-negative integers. Since $\<v' - w_j, a_{F_b}\> =-1$, multiplying both sides of \eqref{eq:alpha} by $a_{F_b}$ we conclude that for some index $l$ we have $\<u_l, a_{F_b}\>  = \<w_j, a_{F_b}\> - 1 = c_{F}$.  Moreover, since $\<u_i, a_{F}\> \in \{c_F, c_F + 1\}$ we have that for any fixed $k$ either all of the $\<u_i-w_j, a_{F_k}\>$ are non-negative or all of them are non-positive. Since $\alpha_i \geq 0$ this also implies that whenever $\<v' - w_j, a_F\> = 0$ for some $F$ we also have $\<u_i - w_j, a_F\> = 0$ for all $i = 1, \dots, \dim(\polytope)$. It follows in particular that $\supp(u_i) \subseteq \supp(w_j)\cup \supp(v')$ for all $i$.

Since $b$ was chosen to be the smallest index where  $\<w_j, a_{F_b}\>$ and $\<v', a_{F_b}\>$ differ, we have that $\<u_l, a_{F_k}\> = \<w_j, a_{F_k}\> = \<v', a_{F_k}\>$ for $k<b$. This in turn  implies that $t_{u_l} \prec t_{w_j}$. By $\supp(u_l) \subseteq \supp(w_j)\cup \supp(v')$ and Lemma~\ref{lemma:compprops} (ii) we conclude that $x^{u_l} | \varphi(m)$, and hence 
\[w_1+\cdots +w_j=u_l+v_1+\cdots+v_{j-1}\] 
for some vertices $v_1,\dots,v_{j-1}\in\polytope$. 
Rewrite the above equality as 
\[\sum_{i=1}^{j-1}(w_i-w_j)=(u_l-w_j)+\sum_{i=1}^{j-1}(v_i-w_j).\] 
For some $r \in \{1,\dots,j-1\}$ the element $u_l-w_j$ has positive coefficient in the expansion of $w_r-w_j$ as a non-negative integral linear combination of the basis elements 
$u_1-w_j,\dots,u_{\dim(\polytope)}-w_j$ in the lattice $\mz^d\cap \mathrm{Span}(\cone(\polytope-w_j))$: 
\[w_r-w_j=\sum_{i=1}^{\dim(\polytope)} p_i(u_i-w_j), \quad p_i\in \mn,\quad p_l>0.\]
This implies that $\supp(u_l)\subseteq \supp(w_r)\cup\supp(w_j)$ by the same argument that was given after \eqref{eq:alpha}. Thus by  Lemma~\ref{lemma:compprops}  (ii) we have 
$x^{u_l} | \varphi(t_{w_j}t_{w_r})$. Now  
$t_{u_l}\prec t_{w_j} \preceq t_{w_r}$,  implying that the quadratic monomial $t_{w_j}t_{w_r}$ is not standard. We are done as $t_{w_j}t_{w_r} | m$.
\end{proof}

A finitely generated graded $k$-algebra $A$ is called a {\it Koszul algebra} if the ground field $k$ has a linear graded free resolution over $A$. By a result of Priddy from \cite{priddy}, the existence of a quadratic Gr\"{o}bner basis of the ideal of relations is a satisfactory condition for the Koszul property of the algebra, so we also have the following corollary:

\begin{corollary}
Let $\polytope$ be a compressed polytope. If $\polytope$ has at most one singular vertex then $\mc[\semigr(\polytope)]$ is Koszul.
\end{corollary}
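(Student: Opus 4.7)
The plan is to deduce the corollary directly from Theorem \ref{thm:grobner} combined with Priddy's criterion. First I would recall the standard setup: $\mc[\semigr(\polytope)]$ is a standard graded $\mc$-algebra generated in degree one by the classes of the lattice points of $\polytope$. Since $\polytope$ is compressed, Lemma \ref{lemma:compprops}(iii) tells us that every lattice point of $\polytope$ is a vertex, so the generators are precisely the $x^{v_i}$ where $v_1, \ldots, v_k$ are the vertices. Thus $\mc[\semigr(\polytope)]$ is presented as $R/\ideal(\polytope)$ where $R = \mc[t_{v_1}, \ldots, t_{v_k}]$ is the standard graded polynomial ring and $\ideal(\polytope)$ is the toric ideal.

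Next I would apply Theorem \ref{thm:grobner}: under the hypothesis that $\polytope$ has at most one singular vertex, $\ideal(\polytope)$ admits a quadratic Gr\"obner basis with respect to the monomial order $\preceq$ constructed in that proof. By Priddy's theorem \cite{priddy} (already cited in the paragraph introducing the corollary), any standard graded $\mc$-algebra whose defining ideal has a quadratic Gr\"obner basis is Koszul. Applying this directly to $R/\ideal(\polytope) = \mc[\semigr(\polytope)]$ yields the Koszul property.

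There is no real obstacle here since both of the required inputs are already in place; the content of the corollary is entirely a bookkeeping assembly of Theorem \ref{thm:grobner} and Priddy's criterion. The only thing to be careful about is ensuring the algebra is generated in degree one (so that ``quadratic defining ideal'' translates to Koszulness in the standard sense), which is guaranteed by the normality of $\polytope$ (Lemma \ref{lemma:compprops}(i)) together with the identification of $\semigr(\polytope)_1$ with the lattice points of $\polytope$.
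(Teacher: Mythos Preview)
Your proposal is correct and matches the paper's approach exactly: the paper does not even give a formal proof, stating the corollary as an immediate consequence of Theorem~\ref{thm:grobner} together with Priddy's criterion \cite{priddy}. Your additional remark verifying that $\mc[\semigr(\polytope)]$ is standard graded (via normality) is a sensible sanity check that the paper leaves implicit.
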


\begin{example}{\rm
Consider the following quiver $\quiver$:
\[
\begin{tikzpicture}[>=open triangle 45,scale=0.8] 

\foreach \x in {(0,0),(1.5,0),(3,0),(6,0),(7.5,0)} \filldraw \x circle (2pt); 
\draw  [->]  (1.5,0) to (0,0);
\draw  [->]  (1.5,0) to [out=225,in=-45] (0,0);
\draw  [->]  (3,0) to  (1.5,0);
\draw  [->]  (3,0) to [out=225,in=-45] (1.5,0);
\draw [dotted] (3.5,0)--(5.5,0);
\draw  [->]  (7.5,0) to (6,0);
\draw  [->]  (7.5,0) to [out=225,in=-45] (6,0); 
\draw  [->]  (7.5,0) to [out=135,in=45] (0,0);

\node[left] at (0,0) {$a_1$};
\node[above] at (1.5,0) {$a_2$};
\node[above] at (3,0) {$a_3$};
\node[above] at (6,0) {$a_k$};
\node[right] at (7.5,0) {$a_{k+1}$};
\end{tikzpicture}
\]

Set $\theta(a_1) = 1$, $\theta(a_{k+1}) = -1$ and $\theta(a_i) = 0$ when $2\leq i \leq k$. $\polytope(\quiver,\theta)$ has a vertex $w$ that takes value $1$ on the arrow from $a_{k+1}$ to $a_1$ and $0$ on the rest of the arrows. Moreover it has $2^k$ vertices that for each $1 \leq i \leq k$ take value 1 on one of the arrows from $a_{i+1}$ to $a_i$ and $0$ on the other, and take value $0$  on the arrow from $a_{k+1}$ to $a_k$. Let us denote these $2^k$ vertices by $v_{i_1,\dots,i_k}$, where $i_j$ equals the value of $v_{i_1,\dots,i_k}$ on the "top" arrow between $a_{i_j+1}$ and $a_{i_j}$. It follows from Proposition \ref{prop:smoothvertex} that all of the  $v_{i_1,\dots,i_k}$ are smooth vertices. It is also easy to verify that $w$ is a singular vertex, for example by checking that it is neighbouring to the $2^k$ vertices  $v_{i_1,\dots,i_k}$ and that the dimension of $\polytope(\quiver,\theta)$ is $k+1$. Furthermore it can be deduced from (ii) of Proposition \ref{prop:compressed} that $\polytope(\quiver,\theta)$ is a compressed lattice polytope. Therefore by Theorem \ref{thm:grobner} the toric ideal $\ideal(\quiver,\theta)$ admits a quadratic Gr\"{o}bner basis and consequently it is  generated by quadratic elements. Indeed one can check that all relations are generated by the ${k \choose 2}$ binomials of the type $t_{v_{0,0,i_3,\dots,i_k}}t_{v_{1,1,i_3,\dots,i_k}}-t_{v_{0,1,i_3,\dots,i_k}}t_{v_{1,0,i_3,\dots,i_k}}$,
}
\end{example}

We conclude this section by giving an example for each positive integer $n$ of a compressed lattice polytope, such that the corresponding ideal of relations is not generated by its elements of degree less than $n$.

Fix $n \geq 2$ and let $\cbgr_{n,n}$ denote the complete bipartite graph on $2n$ vertices, with set of vertices  $(\cbgr_{n,n})_0$ and set of edges  $(\cbgr_{n,n})_1$. Write the bipartition of  $\cbgr_{n,n}$ as $V\coprod W$, where $|V|=|W|=n$  and every $v\in V$ and $w\in W$ is connected by an edge.  Index the coordinates of $\mr^{n^2}$ by the edges of  $\cbgr_{n,n}$. Let $\Pi$ denote the set of the $n!$ perfect matchings of $\cbgr_{n,n}$. Consider the polytope defined as \[\mathcal{P}_n :=  \{x \in \mr^{n^2} \mid\:x\geq 0\quad \forall P \in \Pi:\: \sum_{e\in P} x(e) = 1\}\]
For each $v\in (\cbgr_{n,n})_0$ define the lattice point $m_v$: \[m_v(e) = \begin{cases} 1 &\mbox{if } e\:is\:incident\:to\:v \\ 
0& \mbox otherwise \end{cases}\]
Note that the facets of $\mathcal{P}_n$ can be given as $x(e) = 0$ for edges $e \in (\cbgr_{n,n})_1$, hence for $x \in \mathcal{P}_n$ we can identify $\supp(x)$ with the set $\{e\in(\cbgr_{n,n})_1 | \:x(e) > 0 \}$.
\begin{lemma}\label{lem:knnpolytope}
For any integer $k \geq 1$ and  point $x \in k\mathcal{P}_n$ there is a $v \in (\cbgr_{n,n})_0$ such that $\supp(m_v) \subseteq \supp(x)$.
\end{lemma}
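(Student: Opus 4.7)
The plan is to first obtain an explicit parameterization of $k\mathcal{P}_n$, and then exploit it to force a contradiction. Writing $X_{ij} = x(e_{ij})$ for the edge $e_{ij}$ between the $i$-th vertex of $V$ and the $j$-th vertex of $W$, the defining constraint $\sum_{e \in P} x(e) = k$ becomes $\sum_i X_{i\pi(i)} = k$ once $P$ is identified with a permutation $\pi \in S_n$. I would compare this equation for two permutations differing by a single transposition (equivalently, for two matchings whose symmetric difference is a single $4$-cycle on $\{v_a, v_b, w_p, w_q\}$). Subtracting yields
\[ X_{ap} + X_{bq} = X_{aq} + X_{bp} \]
for all $a \neq b$ and $p \neq q$, so the difference $X_{ip} - X_{iq}$ is independent of $i$. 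Fixing reference indices, this forces the separable form $X_{ij} = u_i + v_j$ for some real numbers $u_1,\dots,u_n,v_1,\dots,v_n$. Plugging this back into any single matching equation gives $\sum_i u_i + \sum_j v_j = k$, while the non-negativity of $x$ becomes $u_i + v_j \geq 0$ for all $i,j$.

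With this parameterization in hand, I would argue by contradiction: suppose no $v \in (\cbgr_{n,n})_0$ satisfies $\supp(m_v) \subseteq \supp(x)$. Then for every row index $i$ there exists $j$ with $u_i + v_j = 0$, and for every column index $j$ there exists $i$ with $u_i + v_j = 0$. Combined with $u_i + v_j \geq 0$, the row condition forces $u_i = -\min_{j} v_j$ for every $i$, so all $u_i$ coincide with a common constant $U$. The column condition then forces $v_j = -U$ for every $j$. Consequently $X_{ij} = U + (-U) = 0$ for every pair $(i,j)$, whence $\sum_{e \in P} x(e) = 0$ for every perfect matching $P$, contradicting $k \geq 1$.

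The main obstacle will be establishing the separable form $X_{ij} = u_i + v_j$. This rests on the fact that any two perfect matchings of $\cbgr_{n,n}$ can be connected through a sequence of $4$-cycle swaps (equivalently, that transpositions generate $S_n$), so that the local difference equation derived above propagates to all pairs of indices. Once this is in place, the remaining min-max analysis is essentially immediate and uses only finiteness of the index sets.
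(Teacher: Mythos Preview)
Your proof is correct and takes a genuinely different route from the paper's.

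Both arguments begin with the same observation: comparing the matching constraint for two permutations differing by a single transposition yields the $4$-cycle identity $X_{ap}+X_{bq}=X_{aq}+X_{bp}$ for all $a\neq b$, $p\neq q$. From that point the two proofs diverge. The paper stays combinatorial: assuming the conclusion fails, it lets $F$ be the set of zero edges, chooses a maximal matching $M$ inside $F$, observes that $M$ cannot be perfect (else the matching sum would be $0$), and then uses the leftover vertices to build a $4$-cycle with two opposite edges of value $0$ and at least one positive edge, contradicting the identity. Your approach is algebraic: you push the $4$-cycle identity to the full separable parameterization $X_{ij}=u_i+v_j$ with $\sum u_i+\sum v_j=k$ and $u_i+v_j\ge 0$, and then a two-line min/max argument forces all entries to vanish, contradicting $k\ge 1$. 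Your route has the pleasant side effect of describing the affine hull of $k\mathcal{P}_n$ explicitly, and once the parameterization is in hand the contradiction is immediate; the paper's route avoids auxiliary parameters and stays in the graph-theoretic language of the surrounding section.

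One small remark on your ``main obstacle'' paragraph: the justification you sketch (connectivity of perfect matchings via $4$-cycle swaps, transpositions generating $S_n$) is not quite the relevant fact. What you actually need is simply that for any $a\neq b$ and $p\neq q$ there exists a permutation $\pi$ with $\pi(a)=p$ and $\pi(b)=q$; this gives the $4$-cycle identity directly for every choice of indices, and the separable form follows at once by fixing a reference column. No propagation or connectivity argument is required.
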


\begin{proof}
First note that for any  $x \in k\mathcal{P}_n$ and cycle of $\cbgr_{n,n}$ with edges $c_1,\dots c_{2i}$, indexed consecutively along the cycle, we have that $x(c_1)+x(c_3)+\dots x(c_{2i-1}) = x(c_2)+x(c_4)+\dots x(c_{2i})$. Otherwise consider a perfect matching $P_1$ of $\cbgr_{n,n}$  containing the edges $c_1,c_3,\dots c_{2i-1}$ and set $P_2$ be the perfect matching we obtain from $P_1$ by replacing $c_j$ by $c_{j+1}$ for each $j < 2i$. For $P_1$ and $P_2$ we have $\sum_{e\in P_1} x(e) \neq \sum_{e\in P_2} x(e)$ contradicting  $x \in k\mathcal{P}_n$.
Now suppose that the conclusion of the  lemma does not hold for some $x$ and let $F :=(\cbgr_{n,n})_1 \setminus \supp(x)$. By the assumption every vertex is incident to at least one edge in $F$. In the subgraph of $\cbgr_{n,n}$ with edge set $F$ take a maximal matching $M$ with edges running between the pairs of vertices $(v_1, w_1), \dots , (v_j, w_j)$, where $v_i\in V$, $w_i\in W$. $M$  can not be a perfect matching, since the coordinates of $x$ sum up to $k$ on every perfect matching. The remaining vertices of $\cbgr_{n,n}$ are  $v_{j+1},\dots v_n\in V$ and  $w_{j+1},\dots w_n\in W$. By the maximality of $M$ the edges running between  $v_{j+1},\dots v_n$ and  $w_{j+1},\dots w_n$ are in $\supp(x)$. Also there has to be some edge of $F$ incident to $v_{j+1}$ and another incident to $w_{j+1}$, let the other endpoint of these two edges be $w^*$ and $v^*$ respectively, and note that $w^* \in \{w_1,\dots,w_j\}$ and $v^* \in\{v_1,\dots,v_j\}$. Now for the length $4$ cycle  $(w^*, v_{j+1}, w_{j+1}, v^*,w^*)$ we have that the first and the third edges do not belong to 
$\supp(x)$, whereas the second edge (running between $v_{j+1}$ and $w_{j+1}$) belongs to $\supp(x)$,  contradicting the observation made at the beginning of the proof.
\end{proof}

\begin {proposition}
\begin{itemize}
\item [(i)] $\mathcal{P}_n$ is a compressed lattice polytope with vertex set $\{m_v \mid v \in K(n,n)_0 \}$.
\item[(ii)] The toric ideal $\ideal(\mathcal{P}_n)$  is not generated by its elements of degree at most $n-1$.
\end{itemize}
\end {proposition}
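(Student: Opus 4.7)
The plan is to establish (i) via the cube-intersection characterization of compressed polytopes in Proposition~\ref{prop:compressed}, and then to prove (ii) by exhibiting an explicit degree-$n$ binomial in $\ideal(\mathcal{P}_n)$ whose associated relation $\sim_s$ has two equivalence classes, applying Proposition~\ref{prop:equivclasses}.

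For (i), I would first observe that for any edge $e$ of $\cbgr_{n,n}$ any perfect matching $P$ containing $e$ gives $x(e) \le \sum_{e' \in P} x(e') = 1$, so $\mathcal{P}_n \subseteq [0,1]^{n^2}$. Hence $\mathcal{P}_n$ is the intersection of the unit cube with an affine subspace, and is compressed by Proposition~\ref{prop:compressed}. Each $m_v$ lies in $\mathcal{P}_n$ since every perfect matching contains exactly one edge incident to $v$. Conversely, any lattice point $x \in \mathcal{P}_n$ has entries in $\{0,1\}$, so Lemma~\ref{lem:knnpolytope} yields a vertex $v$ with $\supp(m_v) \subseteq \supp(x)$, which forces $m_v \le x$ coordinate-wise; the difference $x - m_v \ge 0$ has zero sum on every perfect matching, whence $x = m_v$. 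Lemma~\ref{lemma:compprops}(iii) then identifies these lattice points with the vertices.

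For (ii), I would consider
\[
s := \sum_{v \in V} m_v = \sum_{w \in W} m_w \in \semigr(\mathcal{P}_n)_n,
\]
where both sides coincide with the all-ones vector, since every edge meets exactly one vertex on each side of the bipartition. The corresponding binomial $\prod_{v \in V} t_{m_v} - \prod_{w \in W} t_{m_w}$ lies in $\ideal(\mathcal{P}_n)$ and has degree $n$. The key local computation is that for any two vertices $u, u' \in V \cup W$ we have $m_u + m_{u'} \le s$ if and only if $u$ and $u'$ are non-adjacent in $\cbgr_{n,n}$, equivalently if they lie in the same part of the bipartition: if $u \in V$, $u' \in W$, then the edge $uu'$ sees $m_u + m_{u'}$ take value $2$ while $s$ has value only $1$ there, so the relation fails; in the same-side case $\supp(m_u)$ and $\supp(m_{u'})$ are disjoint and the bound is immediate. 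A preliminary step is to verify $m_u \le s$ for \emph{every} vertex $u$, which uses normality of $\mathcal{P}_n$ (Lemma~\ref{lemma:compprops}(i)) applied to the non-negative real point $s - m_u \in (n-1)\mathcal{P}_n$.

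With this in hand, any $\sim_s$-chain connecting an $m_v$ $(v \in V)$ to some $m_w$ $(w \in W)$ would require a single step crossing the bipartition, which the criterion above forbids. Thus $\sim_s$ has exactly the two bipartite equivalence classes, and Proposition~\ref{prop:equivclasses} then asserts that the above binomial is not contained in the ideal generated by the elements of $\ideal(\mathcal{P}_n)$ of degree at most $n-1$. The main insight is the choice of the degree-$n$ element $s$ as the all-ones vector; the rest is a careful but routine inspection of which pairs of $0/1$ incidence vectors fit underneath $s$.
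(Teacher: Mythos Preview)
Your argument for (ii) is essentially the paper's: set $s=\sum_{v\in V}m_v=\sum_{w\in W}m_w$, observe that $m_{v_i}+m_{w_j}\not\le s$ because the edge $v_iw_j$ receives value $2$, and conclude via Proposition~\ref{prop:equivclasses}. Your ``preliminary step'' invoking normality is unnecessary and slightly misstated: that $m_u\le s$ is a direct check (the all-ones vector minus $m_u$ is non-negative and sums to $n-1$ on every perfect matching), and normality says nothing about membership in $(n-1)\mathcal{P}_n$.

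For (i) there is a genuine circularity. Proposition~\ref{prop:compressed} is stated for \emph{lattice} polytopes, so you cannot invoke (ii)$\Rightarrow$(i) there until you know that every vertex of $\mathcal{P}_n$ is a lattice point; and Lemma~\ref{lemma:compprops}(iii) (which you cite to ``identify lattice points with vertices'') both presupposes compressedness and only gives the inclusion lattice points $\subseteq$ vertices, not the reverse. Your computation that the lattice points of $\mathcal{P}_n$ are exactly the $m_v$ is correct, but it does not rule out non-integral vertices; the intersection of a unit cube with an affine subspace is not a lattice polytope in general. The paper closes this gap by taking an arbitrary vertex $x$ of $\mathcal{P}_n$, applying Lemma~\ref{lem:knnpolytope} to get $v$ with $\supp(m_v)\subseteq\supp(x)$, setting $\lambda=\min\{x(e):e\in\supp(m_v)\}$, and noting that if $x\neq m_v$ then $x$ lies in the open segment between $m_v$ and $(1-\lambda)^{-1}(x-\lambda m_v)\in\mathcal{P}_n$, contradicting extremality. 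Once all vertices are known to be among the $m_v$, compressedness follows from the cube description as you say.
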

\begin{proof}
Suppose $x$ is a vertex of $\mathcal{P}_n$. By Lemma \ref{lem:knnpolytope} we have a vertex $v$ with $\supp(m_v) \subseteq \supp(x)$. Set $\lambda := \min\{x(e) \mid e \in \supp(m_v)\}$.  If $x \neq m_v$ then $0 < \lambda < 1$, and we have that $(1-\lambda)^{-1}(x - \lambda m_v) \in \polytope$. Since $x$ is then an interior point of the line segment between $m_v$ and   $(1-\lambda)^{-1}(x - \lambda m_v)$ it can not be a vertex. This shows us that $\mathcal{P}_n$ is indeed a lattice polytope with vertex set $\{m_v \mid v \in V(K(n,n)) \}$ and it follows from the definition and Proposition \ref{prop:compressed} that $\mathcal{P}_n$ is compressed. 

For (ii) set the notation $V=\{v_1,\dots,v_n\}$ and $W=\{w_1,\dots,w_n\}$ for the vertices in the two sides of the bipartition of $K(n,n)$. We have the equality $s = m_{v_1} + \dots + m_{v_n} = m_{w_1} + \dots + m_{w_n} \in n\mathcal{P}_n$. Clearly for any $i,j \in \{1,\dots,n\}$ we have $m_{v_i} + m_{w_j} \not\leq s$, hence $\sim_s$ has two equivalence classes: $m_{v_1},\dots , m_{v_n}$ and   $m_{w_1},\dots , m_{w_n}$. Now we are done by Proposition \ref{prop:equivclasses}.
\end{proof}

\end{document}